\documentclass[11pt,a4paper]{article}
\usepackage[T1]{fontenc}
\usepackage[utf8]{inputenc}
\usepackage{lmodern}
\usepackage{iftex}
\ifdefined\pdfmapfile\pdfmapfile{+lm.map}\pdfmapfile{+symbols.map}\pdfmapfile{+rsfs.map}\fi
\usepackage[margin=27mm]{geometry}
\usepackage{amsmath,amssymb,amsthm,mathtools,mathrsfs}
\usepackage{microtype,booktabs,longtable,array,enumitem}
\ifpdf
\usepackage{xcolor}
\usepackage{hyperref}
\else
\usepackage[dvipdfmx]{xcolor}
\usepackage[dvipdfmx]{hyperref}
\fi
\hypersetup{hidelinks,pdftitle={A self-normalized law of the iterated logarithm for Rademacher random multiplicative functions},pdfauthor={Dariusz Buraczewski and Alexander Iksanov}}
\newcommand{\E}{\mathbb E}
\newcommand{\PP}{\mathbb P}

\newcommand{\FF}{\mathcal F}

\newcommand{\RR}{\mathbb R}
\newcommand{\cP}{\mathcal P}
\newcommand{\1}{\mathbf 1}
\newcommand{\dd}{\,\mathrm d}
\DeclareMathOperator{\Var}{Var}

\newcommand{\sfcount}{\Phi_{\mathrm{sf}}}

\newcommand{\cD}{\mathcal D}
\newcommand{\cC}{\mathcal C}

\newtheorem{theorem}{Theorem}[section]
\newtheorem{proposition}[theorem]{Proposition}
\newtheorem{lemma}[theorem]{Lemma}
\newtheorem{corollary}[theorem]{Corollary}

\theoremstyle{remark}

\numberwithin{equation}{section}
\setlist[itemize]{leftmargin=*,itemsep=3pt,topsep=5pt}
\allowdisplaybreaks[2]
\title{A self-normalized law of the iterated logarithm\\
for Rademacher random multiplicative functions}
\author{Dariusz Buraczewski\footnote{Mathematical Institute, University of Wroclaw, Poland; e-mail address: dariusz.buraczewski@math.uni.wroc.pl} \ \ and \ \ Alexander Iksanov\footnote{Faculty of Computer Science and Cybernetics, Taras Shevchenko National University of Kyiv, Ukraine; e-mail address:
iksan@univ.kiev.ua}}
\date{}

\begin{document}
\maketitle
\begin{abstract}
Let \(f\) be a Rademacher random multiplicative function supported on the
squarefree integers, and let \(M(x)=\sum_{n\le x}f(n)\). We prove a two-sided law of the iterated logarithm normalized by the full predictable quadratic variation
\[
Q(x)=\sum_{p\le x}\bigg(\sum_{\substack{m\le x/p\\P(m)<p}}f(m)\bigg)^2
\]
of the martingale obtained by revealing the prime signs. Almost surely,
\[
\limsup_{x\to\infty}\frac{M(x)}{\sqrt{2Q(x)\log\log x}}=1,
\qquad
\liminf_{x\to\infty}\frac{M(x)}{\sqrt{2Q(x)\log\log x}}=-1.
\]
The same equalities hold with \(\log\log Q(x)\) in the denominator. The almost sure cluster set of the normalized sum is \([-1,1]\). This gives a self-normalized version of the questions of Wintner and Erd\H{o}s on almost sure fluctuations. We also prove that \(Q(x)/q(x)\) cannot converge in probability
to \(1\) for any positive deterministic function \(q\).
Moreover,
\[
 \limsup_{x\to\infty}
 \frac{|M(x)|}{\sqrt{x}(\log\log x)^{1/4}}
\]
is either an almost surely finite and
positive random variable with nondegenerate distribution or infinite
almost surely.
We do not determine which alternative holds.
\end{abstract}
\noindent\textit{Key words:} law of the iterated logarithm;
Rademacher random multiplicative functions; self-normalization;
predictable quadratic variation; probabilistic number theory;
random Euler products.

\smallskip
\noindent\textit{2020 Mathematics Subject Classification:}
Primary: 60F15; Secondary: 11K65, 11N37, 60G50, 60G42.

\section{Introduction and main results}\label{sec:intro}
Let \(\cP\) be the set of primes. On a probability space
\((\Omega,\FF,\PP)\), let \((\varepsilon_p)_{p\in\cP}\) be independent
Rademacher random variables, that is
\(\PP(\varepsilon_p=1)=\PP(\varepsilon_p=-1)=1/2\).
A positive integer is squarefree if no prime square divides it.
Define the squarefree Rademacher random multiplicative function by
\[
f(1)=1,\qquad
f(n)=
\begin{cases}
\displaystyle\prod_{p\mid n}\varepsilon_p,&n\text{ squarefree},\\
0,&n\text{ not squarefree}.
\end{cases}
\]
Put
\[
M(x)=\sum_{n\le x}f(n).
\]
For \(n\ge2\), write \(P(n)\) for its largest prime factor,
and set \(P(1)=1\). All logarithms are natural. We write
\(\log_1x=\log x\) and \(\log_{j+1}x=\log(\log_jx)\)
whenever these expressions are defined.

Almost sure fluctuations of \(M(x)\) arise in Wintner's question
on an arithmetic analogue of the law of the iterated logarithm
and in Erd\H{o}s Problem~520. Wintner
\cite{Wintner} introduced the squarefree Rademacher model in 1944 as a
probabilistic model for the M\"obius function. Erd\H{o}s  asked whether there is a deterministic
constant \(c>0\) for which
\begin{equation}\label{eq:erdos}
\limsup_{N\to\infty}
\frac{M(N)
}{\sqrt{N\log_2 N}}=c
\quad\text{almost surely}.
\end{equation}
This question is listed as Problem~520 in \cite{Bloom}.
The recent upper bounds of Durkan and Pearce-Crump
\cite{DPC} show that the quotient in
\eqref{eq:erdos} converges to zero almost surely.
In this paper we replace the deterministic variance scale \(N\) by the
full predictable quadratic variation \(Q(N)\), defined in \eqref{eq:ap}, and obtain the exact signed constants \(1\) and \(-1\). Our main result is Theorem \ref{thm:main}.

For comparison, let $(\xi_n)_{n\ge1}$ be independent
Rademacher random variables, with
$\mathbb P(\xi_n=1)=\mathbb P(\xi_n=-1)=1/2$, and put
$S_N=\sum_{n=1}^N\xi_n$. The classical law of the iterated logarithm (see, for instance, \cite[Theorem~8.5.2]{Durrett}) states that, almost surely,
\[
\limsup_{N\to\infty}\frac{S_N}{\sqrt{2N\log_2 N}}=1,
\qquad
\liminf_{N\to\infty}\frac{S_N}{\sqrt{2N\log_2 N}}=-1.
\]
In the multiplicative model, the values $f(n)$ are orthogonal but
not independent: for example, $f(6)=f(2)f(3)$.
Although $\mathbb E Q(x)\sim 6x/\pi^2$, the quantity $Q(x)$ is random.
The normalization in our theorem involves this random quantity itself.

\subsection{Main results}\label{sec:model}

\subsubsection{The self-normalized law of the iterated logarithm}
For each prime \(p\), define
\begin{equation}\label{eq:ap}
a_p(x)=\sum_{\substack{m\le x/p\\P(m)<p}}f(m) \qquad
\text{and} \qquad
Q(x)=\sum_p a_p(x)^2.
\end{equation}

For squarefree \(n>1\), there is a unique decomposition \(n=pm\)
with \(p=P(n)\) and \(P(m)<p\). Hence
\begin{equation}\label{eq:decomp}
M(x)=1+\sum_p\varepsilon_p a_p(x).
\end{equation}
For each fixed \(x\), the partial sums
\(
1+\sum_{p\le y}\varepsilon_p a_p(x)
\)
form a martingale with respect to
\(\FF_y=\sigma(\varepsilon_q:q\le y)\).
Indeed, for \(\FF_{p-}=\sigma(\varepsilon_q:q<p)\),
\(a_p(x)\) is \(\FF_{p-}\)-measurable and
\(\varepsilon_p\) is independent of \(\FF_{p-}\), with mean zero.
For each fixed \(x\), \(Q(x)\) is the terminal predictable
quadratic variation of the martingale obtained by revealing
the prime signs in increasing order. Indeed,
\[
 Q(x)
 =\sum_{p\le x}
   \Var\bigl(\varepsilon_pa_p(x)\mid\FF_{p-}\bigr)
 =\sum_{p\le x}a_p(x)^2.
\]
In particular,
\[
 \E Q(x)=\Var(M(x)).
\]
Our main result is the following self-normalized law of the
iterated logarithm.

\begin{theorem}[Self-normalized LIL]\label{thm:main}
Almost surely,
\begin{equation}\label{eq:main}
\limsup_{x\to\infty}
\frac{M(x)}{\sqrt{2Q(x)\log_2x}}=1,
\qquad
\liminf_{x\to\infty}
\frac{M(x)}{\sqrt{2Q(x)\log_2x}}=-1.
\end{equation}
The same equalities hold if \(\log_2x\) is replaced by
\(\log_2Q(x)\).
\end{theorem}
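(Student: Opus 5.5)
The plan is to treat \(M(x)-1=\sum_p\varepsilon_pa_p(x)\) as the terminal value of a martingale with bounded-ratio increments, and to run the classical Stout/Kolmogorov-type LIL argument for martingales along a sparse geometric sequence \(x_k\). The difficulty is that the martingale depends on \(x\): for each \(x\) we get a different martingale in the prime filtration. So the argument splits into two parts: (i) an LIL along a discrete sequence \(x_k\), obtained from exponential martingale inequalities for each fixed \(x_k\); and (ii) control of the fluctuations of \(M(x)\) and \(Q(x)\) between consecutive grid points.

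\emph{Upper bound along the grid.} For fixed \(x\) and \(\lambda>0\) the process
\[
\exp\Bigl(\lambda\sum_{p\le y}\varepsilon_pa_p(x)-\tfrac{\lambda^2}{2}\sum_{p\le y}a_p(x)^2\Bigr)
\]
is a supermartingale, since \(\cosh t\le e^{t^2/2}\). This holds with no boundedness assumption on the increments because the \(\varepsilon_p\) are Rademacher. Hence
\[
\PP\bigl(M(x)-1\ge \lambda Q(x)/2+ \lambda^{-1}u\bigr)\le e^{-u}.
\]
Take a grid of values \(\lambda_j\) adapted to the possible range of \(Q(x)\), which is polynomially bounded since \(Q(x)\le x^3\) trivially; this costs only a \(\log\) factor. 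Then \(M(x)\le\sqrt{2(1+\delta)Q(x)\log_2x}\) except with probability \(O((\log x)^{-1-\delta'})\). Summing over \(x_k=\exp(k^{1-\eta})\), or over any grid with \(\sum_k(\log x_k)^{-1-\delta'}<\infty\), gives the upper bound on the grid by Borel--Cantelli. The grid can even be very dense (\(x_{k+1}/x_k=1+1/\log x_k\)), because the failure probabilities \((\log x)^{-1-\delta'}\) allow roughly \((\log x)^{\delta'/2}\) points per dyadic block. Interpolation then needs \(|M(x)-M(x_k)|\) and \(|Q(x)-Q(x_k)|\) to be small for \(x\in[x_k,x_{k+1}]\). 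Here \(M(x)-M(x_k)\) is a short sum of length \(x/\log x\). I would bound it by a second-moment or fourth-moment maximal inequality together with a chaining argument over dyadic subintervals. I would control \(Q\) analogously, using \(Q(x)\ge c\,x\) eventually, which needs a lower bound on \(Q\) and may itself be a lemma. I expect this interpolation step to be the main obstacle: the normalisation \(Q\) is random and not monotone, so one needs a quantitative statement that \(Q(x)/Q(x')\to1\) uniformly when \(x'/x\to1\) slowly. The natural attempt is to write \(Q(x')-Q(x)=\sum_p(a_p(x')-a_p(x))(a_p(x')+a_p(x))\) and apply Cauchy--Schwarz, which reduces the problem to showing \(\sum_p(a_p(x')-a_p(x))^2=o(x)\) a.s. uniformly on the grid cells. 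That sum has expectation comparable to \(x'-x\), and its fluctuations can be handled by a moment bound plus Borel--Cantelli.

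\emph{Lower bound.} Take a sparse sequence \(x_k=e^{k^{2}}\) or \(x_k=\exp(e^{k})\), to make the pieces nearly independent. Decompose \(M(x_k)-1=\sum_{p\le x_{k-1}}\varepsilon_pa_p(x_k)+\sum_{x_{k-1}<p}\varepsilon_pa_p(x_k)=:A_k+B_k\). First show that \(A_k\) and its quadratic variation are negligible compared to \(Q(x_k)\log_2x_k\); heuristically \(\sum_{p\le y}a_p(x)^2\approx x\log y/\log x\), so this needs \(\log x_{k-1}=o(\log x_k)\) plus the upper-bound estimate. Then apply the reverse exponential inequality (the lower-tail Kolmogorov bound for martingales with increments \(|\varepsilon_pa_p|\le\max_p|a_p(x_k)|=o(\sqrt{Q/\log_2})\) a.s.) to the martingale \(B_k\) conditionally on \(\FF_{x_{k-1}}\). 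This gives \(\PP(B_k\ge\sqrt{2(1-\delta)Q_k\log_2x_k}\mid\FF_{x_{k-1}})\ge(\log x_k)^{-(1-\delta')}\). Lévy's conditional Borel--Cantelli lemma then yields infinitely many occurrences, since \(\sum k^{-2(1-\delta')}=\infty\) for \(x_k=e^{k^2}\). The bound on the maximal increment \(\max_p|a_p(x)|\) comes from the upper bound applied to \(M\) at scale \(x/p\) (with the \(P(m)<p\) restriction handled by the same martingale device), giving \(|a_p(x)|\ll\sqrt{x\log_2 x}\cdot(\text{small})\) for large \(p\), while small \(p\) contribute to \(A_k\). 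The liminf statement follows by the symmetry \(\varepsilon\mapsto-\varepsilon\).

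Finally, \(\log_2Q(x)\sim\log_2x\) once we know \(x^{1-o(1)}\le Q(x)\le x^{1+o(1)}\) a.s. The upper bound follows from Markov's inequality and Borel--Cantelli along a geometric grid. The lower bound follows from \(Q(x)\ge a_{p^*}(x)^2\) at suitable primes or, more robustly, from the lower bound on \(Q\) used in the interpolation step. This gives the replacement of \(\log_2x\) by \(\log_2Q(x)\).
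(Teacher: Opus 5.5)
Your lower-bound scheme has a genuine gap, and it is structural rather than technical. To make $A_k$ negligible you need $\log x_{k-1}=o(\log x_k)$. Even a fixed ratio $\log x_{k-1}\le\theta\log x_k$ with $\theta<1$ forces $\log x_k$ to grow geometrically. Then $\sum_k(\log x_k)^{-(1-\delta')}<\infty$, and Lévy's conditional Borel--Cantelli lemma gives nothing. Your example $x_k=e^{k^2}$ does not satisfy $\log x_{k-1}=o(\log x_k)$, and $\sum_k k^{-2(1-\delta')}$ converges for $\delta'<1/2$ in any case.

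Splitting the primes into disjoint ranges yields only $O(\log_2X)$ nearly independent trials up to $X$. This could at best give a lower bound with $\log_3x$ in place of $\log_2x$. The normalization $\log_2x$ requires $(\log Y)^{1-o(1)}$ nearly independent trials inside a single block $[Y,Y^{1+o(1)}]$. The paper takes the endpoints $Ye^j$, $0\le j\le\log Y/(\log_2Y)^{100}$, which are all built from the same primes. Their near-independence comes from the oscillation of $x^{it}$ in the Euler-product representation, not from disjoint prime ranges: Proposition~\ref{prop:covariance} shows that, with high probability, in each layer every endpoint has at most $(\log Y)^{\gamma}$ partners with non-negligible covariance. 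The maximum over these endpoints is then reached by the layer-by-layer endpoint selection of Steps~4--7 of Section~\ref{sec:loweroutline}. That selection uses Lemma~\ref{lem:count} at each active layer and a deterministic family of guesses $w\in\mathcal W_Y$ for the random total $q$. This is the missing idea.

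There is a second gap even at a single $k$. The quadratic variation of $B_k$ is not $\FF_{x_{k-1}}$-measurable, because $a_p(x_k)$ for $p>x_{k-1}$ involves the signs at primes in $(x_{k-1},p)$. So a lower exponential bound at the random level $\sqrt{2(1-\delta)Q(x_k)\log_2x_k}$ is not available. The paper handles this by replacing $a_p(x)$ with $b_p(x)=\Psi(x/p,y_{j-1})$ on thin layers and bounding the discarded terms (Propositions~\ref{prop:collision} and~\ref{prop:flatness}). It then proves $Q(x)/(xq)\to1$ uniformly on the grid.

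Also, $\varepsilon\mapsto-\varepsilon$ sends $f(n)$ to $(-1)^{\omega(n)}f(n)$, not to $-f(n)$. So the liminf does not follow by symmetry; the paper reruns the construction with $s=-1$, using the conditional symmetry of each layer sum.

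Your upper bound has the same architecture as the paper's: an exponential supermartingale at grid points, peeling over the range of $Q$, Borel--Cantelli, and interpolation through $M(x)-M(z)$ and $d(z,x)$. Two ingredients need correcting.

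First, $Q(x)\ge cx$ is false. By Lemma~\ref{dn:limits}, $Q(x)\sqrt{\log_2x}/x$ is tight, so $Q(x)/x\to0$ in probability. What is true, and sufficient, is $Q(x)\ge x(\log Y)^{-b}$ on $[Y,Y^2]$ for every fixed $b>0$ (Lemma~\ref{lem:clocklower}). That bound rests on Harper-type lower bounds for $\sum_{\sqrt x<p\le x}M(x/p)^2$.

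Second, the union bound allows only about $(\log Y)^{1+\alpha}$ grid points per block $[Y,Y^2]$, so $x_{k+1}/x_k=1+1/\log x_k$ is too dense. At logarithmic spacing $(\log Y)^{-\alpha}$, a per-cell moment bound plus Borel--Cantelli is not summable, even with the optimal fourth moment; hypercontractive higher moments lose too many powers of $\log$. The paper uses the fourth-moment maximal bound only on cells of logarithmic length $(\log Y)^{-120}$. It passes from these fine points to the coarse grid by applying the exponential inequality again to $M(y)-M(c)=\sum_p\varepsilon_p(a_p(y)-a_p(c))$. The quadratic variation $d(c,y)^2$ of that sum is controlled almost surely, simultaneously for all pairs, by the prime-smoothing Lemma~\ref{lem:smoothingupper}.
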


For a random function \(A\), denote by \(\operatorname*{Clust}_{x\to\infty}A(x)\)
its cluster set as \(x\to\infty\), that is, the set of its subsequential
limits, understood realization-wise. By Theorem~\ref{thm:main}, the
supremum and the infimum of the cluster set of the normalized sum are
equal to \(1\) and \(-1\), respectively, almost surely. Our next result
identifies all its points.

\begin{corollary}\label{cor:cluster}
Almost surely,
\begin{equation}\label{eq:cluster}
\operatorname*{Clust}_{x\to\infty}\frac{M(x)}{\sqrt{2Q(x)\log_2x}}=[-1,1].
\end{equation}
The same equality holds if \(\log_2x\) is replaced by \(\log_2Q(x)\).
\end{corollary}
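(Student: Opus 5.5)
The plan is to deduce the corollary from Theorem~\ref{thm:main} together with a continuity property of the normalized process between nearby times. Write
\[
R(x)=\frac{M(x)}{\sqrt{2Q(x)\log_2x}}.
\]
Fix \(\omega\) in the full-measure event where both equalities in \eqref{eq:main} hold. Then \(\limsup R(x)=1\) and \(\liminf R(x)=-1\), so \(R\) crosses any level \(c\in(-1,1)\) infinitely often, and the cluster set is contained in \([-1,1]\). For the reverse inclusion it suffices to show that the jumps of \(R\) tend to zero almost surely. \(R\) is piecewise constant and changes only at integers \(n\). Once the jumps \(R(n)-R(n-1)\) tend to \(0\), every \(c\in(-1,1)\) is a limit of values \(R(n_k)\) along a subsequence of crossing times. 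The endpoints \(\pm1\) already belong to the cluster set, since the cluster set is closed and contains the limsup and liminf.

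The key step is therefore to show \(R(n)-R(n-1)\to0\) almost surely. Split the increment as
\[
R(n)-R(n-1)=\frac{f(n)}{\sqrt{2Q(n)\log_2 n}}+M(n-1)\Bigl(\frac{1}{\sqrt{2Q(n)\log_2 n}}-\frac{1}{\sqrt{2Q(n-1)\log_2(n-1)}}\Bigr).
\]
By the theorem, \(|M(n-1)|\le 2\sqrt{2Q(n-1)\log_2 n}\) eventually. So the second term is bounded by a constant times
\[
\Bigl|\sqrt{Q(n-1)/Q(n)}\cdot\sqrt{\log_2(n-1)/\log_2 n}-1\Bigr|.
\]
The factor involving \(\log_2\) tends to \(1\). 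Two facts then suffice: \(Q(n)\to\infty\) almost surely, which kills the first term since \(|f(n)|\le1\), and \(Q(n)/Q(n-1)\to1\) almost surely.

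The main obstacle is this control of \(Q\) between consecutive integers. Going from \(n-1\) to \(n\) (with \(n\) squarefree, say \(n=pm\) with \(P(m)<p\)), each \(a_q(x)\) changes only when \(n/q\) is an integer. That happens precisely for \(q\mid n\), and only the representations compatible with \(P(m)<q\) contribute, i.e.\ \(q=P(n)\). So exactly one coefficient \(a_p\) changes, by \(\pm f(m)\), apart from the new prime term when \(n\) is prime. Hence
\[
|Q(n)-Q(n-1)|\le 2|a_p(n)|+1\le 2\sqrt{Q(n)}+1.
\]
It follows that \(Q(n)/Q(n-1)\to1\) as soon as \(Q(n)\to\infty\). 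For \(Q\to\infty\), I would use that \(Q(x)\ge\sum_{x/2<p\le x}a_p(x)^2=\pi(x)-\pi(x/2)\), because \(a_p(x)=f(1)=1\) for \(p>x/2\). Thus \(Q(x)\gg x/\log x\) deterministically, which settles both points.

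The statement with \(\log_2 Q(x)\) in place of \(\log_2 x\) follows the same way. Its increment ratio tends to \(1\) by the same bounds, since \(x/\log x\ll Q(x)\ll\) a power of \(x\). Alternatively, it follows because \(\log_2Q(x)/\log_2x\to1\) almost surely, which gives identical cluster sets.
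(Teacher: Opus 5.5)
Your proposal is correct and follows essentially the same route as the paper: the increments $R(n)-R(n-1)$ tend to zero because passing from $n-1$ to $n$ changes only the coordinate $a_{P(n)}$, and only by at most one, while $Q(x)\ge\pi(x)-\pi(x/2)\gg x/\log x$ deterministically; an upcrossing argument then fills in $(-1,1)$. One small slip is that $R$ is not literally piecewise constant, since the factor $\log_2x$ varies between integers; but $R(x)=R(\lfloor x\rfloor)\bigl(\log_2\lfloor x\rfloor/\log_2x\bigr)^{1/2}$ with the last factor tending to $1$, so the cluster sets over real $x$ and over integers coincide, exactly as the paper notes.
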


We stress that the random normalization in Theorem~\ref{thm:main}
is not merely a technical device. By Theorem~\ref{dn:clock} below,
\(Q(x)\) has no deterministic asymptotic equivalent, and, by
Theorem~\ref{dn:critical} below, a deterministic normalization at the
critical scale \(\sqrt{x}(\log_2x)^{1/4}\) cannot produce a finite
deterministic limit constant. Thus, the randomness of \(Q(x)\) is an
intrinsic feature of the model. Theorem~\ref{thm:main} shows that,
once \(M(x)\) is normalized by its own predictable quadratic
variation, the constants \(1\) and \(-1\) of the classical law of the
iterated logarithm reappear. This is in line with the role of
self-normalization in martingale limit theory, see, for instance,
\cite{HallHeyde}. For example, in the martingale central limit
theorem, normalization by the quadratic variation turns a mixed normal
limit into a standard normal one, see \cite[Chapter~3]{HallHeyde}.
For the Steinhaus model, deterministically normalized sums converge in
distribution to Gaussian scale mixtures \cite{HardyRestricted,GW}.
Theorem~\ref{thm:main} may be viewed as an almost sure analogue of
this phenomenon. 

We also note that the classical martingale laws of the iterated
logarithm, see \cite{Stout} and \cite[Chapter~4]{HallHeyde}, do not
apply directly. For each fixed \(x\), the variable \(M(x)-1\) is the
terminal value of a martingale indexed by the primes, but
\(x\mapsto M(x)\) is not a martingale, and the martingales corresponding
to different values of \(x\) are built from the same prime signs. The
proofs of both bounds in Theorem~\ref{thm:main} therefore require
control of this dependence uniformly in \(x\).

\subsubsection{Deterministic normalization}
\label{sec:deterministic-results}

A natural question arising from Theorem~\ref{thm:main} is whether
the random normalization can be replaced by a deterministic one
while retaining a finite positive LIL constant. We do not resolve
this question in general. We establish two obstructions and a
zero--one alternative for the absolute limsup at the critical scale.

First, the predictable quadratic variation \(Q\) has no
deterministic asymptotic equivalent, even along a deterministic
subsequence.

\begin{theorem}
\label{dn:clock}
For every deterministic sequence \(x_k\to\infty\) and every
positive deterministic sequence \(q_k\), the convergence
\[
\frac{Q(x_k)}{q_k}\xrightarrow{\mathbb P}1
\]
cannot hold. In particular, no deterministic sequence is
asymptotically equivalent to \(Q(n)\) almost surely.
\end{theorem}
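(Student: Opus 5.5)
The plan is to exploit the smallest prime: resampling \(\varepsilon_2\) changes \(Q(x)\) by an amount of the same order as \(Q(x)\) itself, whereas convergence of \(Q(x_k)/q_k\) in probability to \(1\) would force that change to be negligible. For \(p>2\), split the sum defining \(a_p(x)\) in \eqref{eq:ap} according to whether \(m\) is odd or even. Writing an even \(m\) as \(m=2m'\) with \(m'\) odd gives
\[
a_p(x)=A_p(x)+\varepsilon_2A_p(x/2),\qquad A_p(y):=\sum_{\substack{m\le y/p,\ m\text{ odd}\\ P(m)<p}}f(m).
\]
Each \(A_p(\cdot)\) is measurable with respect to \(\sigma(\varepsilon_q:q\ge3)\), hence independent of \(\varepsilon_2\). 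The term \(p=2\) equals \(a_2(x)^2=1\) and is negligible. Squaring and summing gives
\[
Q(x)=U(x)+\varepsilon_2V(x),\qquad U=1+\sum_{p>2}\bigl(A_p(x)^2+A_p(x/2)^2\bigr),\quad V=2\sum_{p>2}A_p(x)A_p(x/2),
\]
with \(U,V\) independent of \(\varepsilon_2\) and \(|V|\le U\).

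Suppose \(Q(x_k)/q_k\to1\) in probability. Condition on \(\varepsilon_2=\pm1\); each event has probability \(1/2\), and \(\varepsilon_2\) is independent of \((U,V)\). It follows that \((U+V)/q_k\to1\) and \((U-V)/q_k\to1\) in probability, so \(V(x_k)/q_k\to0\) and \(U(x_k)/q_k\to1\) in probability.

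Next compute means. By orthogonality of the \(f(m)\),
\[
\E V(x)=2\sum_{p>2}\#\{m\le x/(2p):\ m\text{ odd squarefree},\ P(m)<p\}.
\]
This counts odd squarefree \(n=pm\le x/2\) with \(p=P(n)>2\), so \(\E V(x)\sim c_1x\) with \(c_1>0\) an explicit density of odd squarefree integers. Likewise \(\E U(x)=\E Q(x)=\Var M(x)\sim 6x/\pi^2\). Fatou's lemma applied to the nonnegative variables \((U-V)/q_k\to1\) gives
\[
q_k\le(1+o(1))\,\E(U-V)(x_k)=(1+o(1))(6/\pi^2-c_1)x_k .
\]
In particular \(q_k=O(x_k)\), so \(V(x_k)/x_k\to0\) in probability as well.

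To reach a contradiction it then suffices to know that \(\{V(x)/x\}\) (or \(\{U(x)/x\}\)) is uniformly integrable. Then either \(\E V(x_k)/x_k\to0\), contradicting \(\E V\sim c_1x\), or \(\E(U+V)(x_k)/q_k\to1\) and \(\E(U-V)(x_k)/q_k\to1\), whose difference again forces \(\E V(x_k)=o(q_k)=o(x_k)\). The final sentence of the theorem follows, because almost sure equivalence \(Q(n)/q_n\to1\) would imply convergence in probability along \(x_k=k\).

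I expect the uniform integrability to be the main obstacle. Since \(|V|\le U\le Q(x)+Q(x/2)+1\) termwise (via \(A^2+B^2=\tfrac12((A+B)^2+(A-B)^2)\)), it is enough to bound \(\E Q(x)^{1+\delta}\ll x^{1+\delta}\) for some \(\delta>0\). I would try Minkowski, \(\|Q(x)\|_{1+\delta}\le\sum_p\|a_p(x)\|_{2+2\delta}^2\), together with a low-moment bound \(\E|a_p(x)|^{2+2\delta}\ll (x/p)^{1+\delta}\cdot(\text{smooth-number density})\). For small \(\delta\), the hypercontractive loss over the chaos degree (at most about \(\log x/\log p\)) should be affordable once combined with the constraint \(P(m)<p\).

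The naive choice \(\delta=1\) fails: fourth moments of \(M\) carry an extra \(\log x\). If the moment route breaks down, I would instead bound \(\E V(x)^2\ll x^2\) directly. This is a fourth-moment count of pairs of solutions to \(m_1m_2=m_3m_4\) with the weights \(A_p(x)A_p(x/2)\), where the \(P(m)<p\) restriction should remove the logarithm. Uniform integrability of \(V/x\) then follows from this second-moment bound.
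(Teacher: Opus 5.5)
Your splitting $Q(x)=U(x)+\varepsilon_2V(x)$, the deduction that $U(x_k)/q_k\to1$ and $V(x_k)/q_k\to0$ in probability, and the Fatou bound $q_k=O(x_k)$ are all correct. The gap is the uniform integrability step, and the problem is not that it is hard but that it is false. Neither $U(x)/x$ nor $V(x)/x$ is uniformly integrable. Consequently no bound $\E Q(x)^{1+\delta}\ll x^{1+\delta}$ can hold for any $\delta>0$, and no bound $\E V(x)^2\ll x^2$ can hold either.

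The reason is better-than-squareroot cancellation. Harper's low-moment estimate $\E|M(x)|^{2s}\asymp\bigl(x/(1+(1-s)\sqrt{\log_2x})\bigr)^s$, transferred to $Q$ by Burkholder's square-function inequality, gives $\E Q(x)^{3/4}\asymp(x/\sqrt{\log_2x})^{3/4}$. Hence $Q(x)/x\to0$ in probability, even though $\E Q(x)/x\to6/\pi^2$. Since $\varepsilon_2$ is independent of $(U,V)$ and $U\pm V\ge0$, you have $\PP(U\pm V>\epsilon x)\le2\PP(Q(x)>\epsilon x)$. So $U(x)/x\to0$ and $V(x)/x\to0$ in probability \emph{unconditionally}, while $\E U(x)/x\to6/\pi^2$ and $\E V(x)/x\to4/\pi^2$. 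Uniform integrability of $V(x)/x$ would therefore contradict $\E V(x)\sim c_1x$ without using the hypothesis $Q(x_k)/q_k\to1$ at all, which shows it cannot be proved. The mean of $Q(x)$ is carried by rare events, while its typical size is $x/\sqrt{\log_2x}$. An argument that plays $\E V\sim c_1x$ against $V=o(q_k)$ in probability cannot see that typical behaviour.

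What is missing is a non-concentration statement at the typical scale. The paper gets it from the uniform-in-$s$ form of Harper's estimate. For $Z_x=Q(x)\sqrt{\log_2x}/x$ and $3/4\le r\le1$, Burkholder gives $\E Z_x^r\asymp\bigl((\log_2x)^{-1/2}+1-r\bigr)^{-r}$. The case $r=3/4$ gives tightness of $(Z_x)$. The blow-up as $r\uparrow1$, together with boundedness of $\E Z_x^s$ for a fixed $s<1$ and H\"older's inequality, gives $\PP(Z_x>B)\ge\delta_B>0$ for every $B$ and all large $x$. Then write $Z_{x_k}=b_kQ(x_k)/q_k$ with deterministic $b_k=q_k\sqrt{\log_2x_k}/x_k$. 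Tightness rules out $b_k\to\infty$, the tail bound rules out $b_k\to0$, and a subsequential limit $b\in(0,\infty)$ contradicts $\PP(Z_{x_k}>b+1)\ge\delta_{b+1}$.

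If you want to keep the $\varepsilon_2$-resampling idea, you would need something like $\PP(|V(x)|\ge cU(x))\ge c$ for a fixed $c>0$. That is, $V/U$ must not degenerate on the typical event where $Q$ is of size $x/\sqrt{\log_2x}$. This is a distributional statement that first and second moments cannot supply, and it would require new input about the typical behaviour of $Q$ of the kind Harper's methods provide.
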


This result alone does not determine the limsup of a
deterministically normalized sum. Our second result concerns
the critical scale \(\sqrt{x}(\log_2x)^{1/4}\) directly.
Put
\begin{equation}\label{dn:amplitude-definition}
b(x)=\sqrt{x}(\log_2x)^{1/4},\qquad
R^*=\limsup_{x\to\infty}\frac{|M(x)|}{b(x)}.
\end{equation}

\begin{theorem}
\label{dn:critical}
For every finite \(R>0\), the following two inequalities hold
simultaneously with positive probability:
\[
\limsup_{x\to\infty}\frac{M(x)}{b(x)}>R,
\qquad
\limsup_{x\to\infty}\frac{-M(x)}{b(x)}>R.
\]
In particular, \(R^*\) has no finite deterministic essential
upper bound, and cannot equal a finite deterministic constant
almost surely.
\end{theorem}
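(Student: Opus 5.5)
We derive Theorem~\ref{dn:critical} from Theorem~\ref{thm:main}. The key input is that \(Q(x)/x\) can be made arbitrarily large along a sequence of \(x\) with positive probability. This is exactly the mechanism behind Theorem~\ref{dn:clock}: the random Euler product governs the size of \(Q\).

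\textbf{Step 1 (a small set of primes can inflate \(Q\)).} Fix \(R>0\) and choose a finite set of small primes \(p\le y_0\). Consider the event \(E\) that \(\varepsilon_p=-1\) for all \(p\le y_0\), or more generally some fixed sign pattern. Conditioning on \(E\) makes the Euler factors \(\prod_{p\le y_0}(1+\varepsilon_p p^{-s})\) large near \(s=1/2\). This event has positive probability \(2^{-\pi(y_0)}\). I would then show that on \(E\) (or on a positive-probability sub-event), almost surely
\begin{equation*}
\limsup_{x\to\infty}\frac{Q(x)}{x\sqrt{\log_2 x}}\ge C(y_0),
\end{equation*}
where \(C(y_0)\to\infty\) as \(y_0\to\infty\). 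The natural route is the Parseval/Harper-type representation
\[
Q(x)\approx x\int \frac{|F_x(1/2+it)|^2}{|1/2+it|^2}\,\dd t \cdot (\log x)^{-1}
\]
with \(F_x\) the partial Euler product. The large-\(Q\) times come from the critical-multiplicative-chaos fluctuations, which produce the \(\sqrt{\log_2 x}\). The small primes multiply this by a factor \(\prod_{p\le y_0}|1+\varepsilon_p p^{-1/2-it}|^2\). At \(t\approx 0\), with all \(\varepsilon_p=+1\), this factor is about \(\exp(2\sum_{p\le y_0}p^{-1/2})\), which is unbounded. I expect this step to be the main obstacle. The difficulty is that the limsup of \(Q(x)/(x\sqrt{\log_2x})\) is a tail-type quantity, yet the small primes must be shown to act as a genuine multiplicative factor without being washed out by the tail primes, which are independent of \(E\). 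I would first try to factor \(f=f_{\le y_0}*f_{>y_0}\) and write \(Q\) for \(f\) as a weighted average of the analogous quantity for \(f_{>y_0}\) at the rescaled points \(x/d\), with \(d\) being \(y_0\)-smooth.

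\textbf{Step 2 (from \(Q\) to \(M\) via Theorem~\ref{thm:main}).} By Theorem~\ref{thm:main}, almost surely there are arbitrarily large \(x\) with \(M(x)\ge(1-\delta)\sqrt{2Q(x)\log_2x}\). To combine this with Step 1, I need the LIL peaks and the large-\(Q\) times to coincide. For that I would use the proof structure of the lower bound in Theorem~\ref{thm:main}: the peaks are obtained along a geometric sequence of blocks, within which \(Q\) is essentially constant. So I would run the lower-bound argument along the subsequence of blocks where \(Q(x)\ge C x\sqrt{\log_2 x}\), which is infinite on the event of Step 1. The result is
\[
\frac{M(x)}{b(x)}\ge(1-\delta)\sqrt{2C}\,(\log_2 x)^{1/2}\cdot(\log_2x)^{-1/4}(\log_2x)^{1/4}\cdots
\]
After the bookkeeping, \(\sqrt{Q\log_2x}\ge \sqrt{C}\,\sqrt{x}(\log_2x)^{3/4}\), which exceeds \(R\,b(x)\) once \(C\) is large. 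If instead the true size of \(Q\) at the peaks is \(x(\log_2x)^{-1/2}\), the exponents change: the correct normalization in Step 1 is whatever makes \(\sqrt{2Q\log_2 x}\asymp b(x)\), namely \(Q\asymp x(\log_2x)^{-1/2}\). So Step 1 should actually target
\[
\limsup \frac{Q(x)}{x(\log_2 x)^{-1/2}}\ge C(y_0).
\]

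\textbf{Step 3 (both signs simultaneously).} The liminf part of Theorem~\ref{thm:main} is symmetric, and the lower-bound argument gives infinitely many negative peaks along the same large-\(Q\) blocks. Therefore both inequalities hold on the same positive-probability event \(E\). Finally, taking \(R\) arbitrary shows that \(R^*\) exceeds every finite constant with positive probability, so it has no finite essential upper bound and cannot be an almost sure finite constant.
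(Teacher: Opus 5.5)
\textbf{Gap 1: Step 1.} Your argument rests on Step~1, which you do not prove, and it targets a statement much harder than what is needed. You want an almost sure bound $\limsup_x Q(x)\sqrt{\log_2x}/x\ge C(y_0)$, with $C(y_0)\to\infty$, on a fixed small-prime sign pattern $E$. The Euler-product heuristic only says that the small primes inflate the chaos integral on average. The factor $\prod_{p\le y_0}|1+\varepsilon_pp^{-1/2-it}|^2$ is large only near $t\approx0$ and its almost-periods. An almost sure lower bound on a limsup would need control for essentially every large-prime configuration, along infinitely many scales, and the factorisation $f=f_{\le y_0}*f_{>y_0}$ does not supply it.

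What is actually needed is only a marginal tail bound for $Z_x=Q(x)\sqrt{\log_2x}/x$: for every $B>0$ there is $\delta_B>0$ with $\PP(Z_x>B)\ge\delta_B$ for all large deterministic $x$. This is the route of the paper (Lemma~\ref{dn:limits}).
\begin{itemize}
\item Start from Harper's low-moment theorem $\E|M(x)|^{2s}\asymp(x/(1+(1-s)\sqrt{\log_2x}))^s$.
\item Transfer it to $\E Q(x)^r$, $3/4\le r\le1$, by Burkholder's inequality for the prime-sign martingale, whose square function is exactly $\sqrt{Q(x)}$. This gives $\E Z_x^r\asymp((\log_2x)^{-1/2}+1-r)^{-r}$.
\item Choose $r<1$ with $\E Z_x^r\ge B^r+1$ and apply H\"older with some $s\in(r,1)$ to get the tail bound.
\end{itemize}
No conditioning on small primes is involved.

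\textbf{Gap 2: Step 2.} Even granting Step~1, Step~2 has an independent gap. For each deterministic $Y$, the lower-bound construction gives an event $E_Y$ with $\PP(E_Y)\to1$. On $E_Y$, both signed exceedances of size $\tfrac12\sqrt{2Q(x)\log_2x}$ occur in $\mathcal X_Y$, and $Q(x)/(xq)=1+o(1)$ uniformly on $\mathcal X_Y$; you correctly identified this flatness. But the construction only gives $\PP(E_Y^c)=o(1)$ at each deterministic $Y$. It gives no almost sure control along the \emph{random} subsequence of blocks where $Q$ happens to be large. An almost sure statement that $Z$ is large infinitely often does not prevent those blocks from being exactly the ones where $E_Y$ fails. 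Passing to a sparse deterministic sequence where Borel--Cantelli applies does not help either, since a limsup over all $x$ says nothing about $Z$ along that particular sequence.

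The paper instead combines marginals at deterministic $Y$. Take $B=16R^2$ and $\mathcal A_Y=E_Y\cap\{Z_Y>B\}$. Then $\PP(\mathcal A_Y)\ge\delta_B-\PP(E_Y^c)$, hence $\PP(\limsup_j\mathcal A_{Y_j})\ge\limsup_j\PP(\mathcal A_{Y_j})\ge\delta_B$. On $\mathcal A_Y$, use the identity $M(x)/b(x)=\bigl(M(x)/\sqrt{2Q(x)\log_2x}\bigr)\sqrt{2Z_x}$ together with $Z_x\ge Z_Y/2$ on the grid. This gives both $\max_{\mathcal X_Y}M/b>2R$ and $\max_{\mathcal X_Y}(-M)/b>2R$.

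Replacing your Step~1 by the marginal tail bound, and your Step~2 by this reverse-Fatou argument, closes the proof. Your Step~3 then goes through unchanged.
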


The preceding theorem leaves open whether \(R^*\) is finite.
The next proposition shows that \(R^*\) is either finite
almost surely or infinite almost surely. In the finite case,
it is positive almost surely and has a nonconstant distribution
with unbounded support.

\begin{proposition}[Zero--one alternative]\label{dn:zero-one}
The event \(\{R^*<\infty\}\) belongs to the
tail sigma-field of the prime signs.
Consequently, precisely one of the following alternatives holds:
\[
0<R^*<\infty\quad\text{almost surely},
\qquad\text{or}\qquad
R^*=\infty\quad\text{almost surely}.
\]
In the first case, \(R^*\) has unbounded support and is therefore
nonconstant.
\end{proposition}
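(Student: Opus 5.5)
Our plan is to show that changing finitely many prime signs changes \(R^*\) only by a bounded multiplicative factor (in particular, it preserves finiteness). Fix a finite set \(S\) of primes. Write \(n=dm\) with \(d\) composed of primes in \(S\) and \((m,\prod_{p\in S}p)=1\). This gives
\[
M(x)=\sum_{d\mid \prod_{p\in S}p} f(d)\,M_S(x/d),\qquad M_S(y)=\sum_{\substack{m\le y\\ (m,\prod_{p\in S}p)=1}}f(m).
\]
Here \(M_S\) does not depend on \(\varepsilon_p\), \(p\in S\). Conversely, Möbius-type inversion (multiplying by the Dirichlet inverse of \(\prod_{p\in S}(1+\varepsilon_p p^{-s})\), which is \(\prod_{p\in S}\sum_{k\ge0}(-\varepsilon_p)^k p^{-ks}\)) gives
\[
M_S(y)=\sum_{d\text{ }S\text{-smooth}} g(d)\,M(y/d),\qquad |g(d)|\le 1.
\]
The sum is infinite, but \(M(y/d)=0\) for \(d>y\). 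Since \(b(x/d)\le b(x)d^{-1/2}\) and \(\sum_{d\ S\text{-smooth}}d^{-1/2}=\prod_{p\in S}(1-p^{-1/2})^{-1}<\infty\), we get the following. If \(|M(x)|\le C b(x)\) for all large \(x\), then \(|M_S(y)|\le C' b(y)+O_S(\cdot)\) for all large \(y\). The small-\(y/d\) terms contribute at most \(\sum_{d\le y}O(1)\) over \(S\)-smooth \(d\), which is \(O((\log y)^{|S|})=o(b(y))\). Feeding this into the first identity shows that the same bound holds for \(M\) computed with any other choice of the signs \(\varepsilon_p\), \(p\in S\). Hence \(\{R^*<\infty\}\) is invariant under finite sign changes. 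By the Hewitt–Savage/Kolmogorov tail argument (the event is determined by \((\varepsilon_p)_{p\notin S}\) for every finite \(S\), since it equals \(\{\limsup |M_S|/b<\infty\}\) up to the above equivalence), it lies in the tail \(\sigma\)-field. Kolmogorov's zero--one law then gives \(\PP(R^*<\infty)\in\{0,1\}\).

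For the finite case we also need \(R^*>0\) almost surely. We will get this from Theorem~\ref{thm:main}. We have \(\limsup M(x)/\sqrt{2Q(x)\log_2x}=1\), so \(R^*>0\) unless \(Q(x)\log_2x=o(b(x)^2)=o(x(\log_2x)^{1/2})\) along the relevant times. We will argue instead that \(\{R^*=0\}\) is also invariant under finite sign changes by the same two identities, so it has probability \(0\) or \(1\). It cannot have probability \(1\): then \(M(x)=o(b(x))\) almost surely, hence \(M(x)/b(x)\to0\) in probability. But \(\E M(x)^2\sim 6x/\pi^2\), and the standard fourth-moment bound \(\E M(x)^4\ll x^2\log x\) is too weak. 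So we will instead use the known lower bound \(\E|M(x)|\gg \sqrt{x}(\log_2x)^{-1/4}\) together with the almost sure version along a sparse sequence. \emph{This step is the main obstacle}: deterministic lower bounds on \(|M(x)|\) at the scale \(b(x)\) are delicate (the \(\log_2\) exponents point the wrong way). The fallback is to apply Theorem~\ref{thm:main} with a lower bound \(Q(x)\ge x/(\log_2x)^{1/2}\) infinitely often, which we would need to extract from the proof of Theorem~\ref{dn:critical}.

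Finally, for unbounded support and nonconstancy: on \(\{R^*<\infty\}\), Theorem~\ref{dn:critical} gives \(\PP(R^*>R)>0\) for every \(R\). So \(R^*\) is not essentially bounded, and in particular is not almost surely constant. This also yields a nondegenerate distribution.
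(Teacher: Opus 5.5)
Your invariance argument is correct and is essentially the paper's. The paper flips a single sign, using \(M(x)=M_p(x)+\varepsilon_pM_p(x/p)\) and the inversion \(M_p(x)=\sum_{k\ge0}(-\varepsilon_p)^kM(x/p^k)\), and obtains the factor \((\sqrt p+1)/(\sqrt p-1)\); your finite-\(S\) identities give the same factor prime by prime. Your identification \(\{R^*<\infty\}=\{\limsup_y|M_S(y)|/b(y)<\infty\}\) is a clean substitute for the paper's device of resetting the first \(m\) signs to \(+1\).

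The gap is the step you flag yourself: the proposal never proves \(\PP(R^*=0)=0\), so it does not rule out \(R^*=0\) almost surely. The routes you sketch do not close it. The moment route cannot work even in principle, because the consequence you hope to contradict is true. Harper's bound \(\E|M(x)|\asymp\sqrt x(\log_2x)^{-1/4}\) gives \(\E|M(x)|/b(x)\asymp(\log_2x)^{-1/2}\to0\), so \(M(x)/b(x)\to0\) in probability. The fallback, as phrased, is also insufficient. Knowing \(Q(x)\ge x(\log_2x)^{-1/2}\) infinitely often does not help unless those times coincide with times at which \(M(x)/\sqrt{2Q(x)\log_2x}\) is bounded below. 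That simultaneity is precisely what the block flatness \eqref{dn:flatness} supplies inside the proof of Theorem~\ref{dn:critical}.

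No such extraction is needed. You have already shown that \(\{R^*=0\}\) is invariant under finite sign changes, so it is a tail event with probability \(0\) or \(1\). Theorem~\ref{dn:critical} with \(R=1\), which you invoke in your last paragraph anyway, gives \(\PP(R^*>1)>0\). Hence \(\PP(R^*=0)<1\), and therefore \(\PP(R^*=0)=0\). This is exactly how the paper concludes, and with this line inserted your proof is complete.
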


Verreault \cite[Conjecture~1.4]{Verreault} conjectures a law of
the iterated logarithm for the Steinhaus model with a random
amplitude determined by critical multiplicative chaos.
Our results concern the squarefree Rademacher model. They exclude
a finite deterministic amplitude at the critical scale, but
leave open which alternative in Proposition~\ref{dn:zero-one}
holds. In particular, a finite nonconstant random amplitude
remains possible. These results do not settle Verreault's
conjecture.

These results do not determine whether there exist a positive
deterministic slowly varying function \(\ell(x)\to\infty\)
and a constant \(c\in(0,\infty)\) such that
\[
 \limsup_{x\to\infty}
 \frac{|M(x)|}
 {\sqrt{x}(\log_2x)^{1/4}\ell(x)}
 =c
 \qquad\text{almost surely}.
\]

\subsection{Historical background}

The squarefree Rademacher model is studied alongside the Steinhaus
model, in which the values $g(p)$ are independent and uniformly
distributed on the unit circle, and
\[
g(n)=\prod_{p\mid n}g(p)^{v_p(n)},\qquad g(1)=1,
\] where $v_p(n)$ denotes the exponent of $p$ in the prime
factorization of $n$.
Thus $g$ is completely multiplicative, whereas the Rademacher
function $f$ vanishes on integers that are not squarefree.
Since the methods for the two models are closely related, we include both in the following overview. We retain
$f$ and $M(x)=\sum_{n\le x}f(n)$ for the Rademacher model and write
\[
M_{\mathrm S}(x)=\sum_{n\le x}g(n)
\]
for the Steinhaus model. In statements applying to both models,
$\mathcal M$ denotes either $M$ or $M_{\mathrm S}$; the labels R and S in the table indicate the models to which a result applies.

\subsubsection{Almost sure bounds}

The study of almost sure bounds began with Wintner \cite{Wintner}
in 1944. For the squarefree Rademacher model, he proved that
$M(x)=O(x^{1/2+\varepsilon})$ almost surely for every
$\varepsilon>0$, whereas $M(x)=O(x^{1/2-\varepsilon})$ fails
almost surely. These results determine the exponent $1/2$ but leave open the size of the fluctuations around $\sqrt{x}$.
Wintner suggested that an arithmetic analogue of the law of the
iterated logarithm might describe them. 

Hal\'asz \cite{Halasz} used conditioning on small primes and moment estimates to obtain more precise upper and lower bounds. Basquin \cite{Basquin} and
Lau, Tenenbaum and Wu \cite{LTW} subsequently reduced the loss
beyond $\sqrt{x}$ to a power of $\log\log x$. The corrected version of \cite{LTW} gives
\[
|M(x)|\ll\sqrt{x}(\log_2x)^{2+\varepsilon}.
\]

Harper's early work \cite{HarperGaussian} used Gaussian-process
estimates to improve the lower bounds. In \cite{HarperLarge}, he proved that, for either model and every fixed
deterministic function $V(x)\to\infty$, almost surely
\[
|\mathcal M(x)|\ge
\frac{\sqrt{x}(\log_2x)^{1/4}}{V(x)}
\]
for arbitrarily large $x$. In particular,
$\mathcal M(x)=O(\sqrt{x})$ fails almost surely.

The subsequent upper bounds approach this lower bound. Caich \cite{Caich} reduced the upper exponent of $\log_2x$
to $3/4+\varepsilon$. Further improvements appeared in the summer and early autumn of 2026. Durkan and Pearce-Crump
\cite{DPC} obtained, for both models,
\[
|\mathcal M(x)|\ll\sqrt{x}(\log_2x)^{1/4+\varepsilon}.
\]
Together with Harper's lower bound, this determines the exponent
$1/4$. Verreault \cite{Verreault} further refined the upper bound to
\[
|\mathcal M(x)|\ll
\sqrt{x}(\log_2x)^{1/4}(\log_3x)^{1+\varepsilon}.
\]
These developments also show that the quotient in
\eqref{eq:erdos} converges to zero almost surely.
Table~\ref{tab:history} summarizes these almost sure bounds.
{
\small
\setlength{\tabcolsep}{5pt}
\renewcommand{\arraystretch}{1.18}
\begin{longtable}{@{}>{\raggedright\arraybackslash}p{39mm}>{\raggedright\arraybackslash}p{10mm}>{\raggedright\arraybackslash}p{100mm}@{}}
\caption{Development of almost sure bounds for the full sums
$M(x)$ and $M_{\mathrm S}(x)$.}\label{tab:history}\\
\toprule
Work & Model & Result or contribution\\
\midrule
\endfirsthead
\toprule Work & Model & Result or contribution\\ \midrule
\endhead
\bottomrule
\endfoot
Wintner (1944) \cite{Wintner}
& R & $M(x)=O(x^{1/2+\varepsilon})$;
$M(x)\ne O(x^{1/2-\varepsilon})$.\\
Hal\'asz (1983) \cite{Halasz}
& R & $M(x) \ll
\sqrt{x}\exp\{C\sqrt{\log_2x\,\log_3x}\}$.\\
Basquin (2012) \cite{Basquin}; Lau--Tenenbaum--Wu (2013)
\cite{LTW}
& R & $M(x)\ll\sqrt{x}(\log_2x)^{2+\varepsilon}$.\\
Harper (2010 preprint; 2013) \cite{HarperGaussian}
& R & $M(x)\ne O(\sqrt{x}(\log_2x)^{-5/2-\varepsilon})$.\\
Harper (2020 preprint; 2023) \cite{HarperLarge}
& R, S & $|\mathcal M(x)|\gg\sqrt{x}(\log_2x)^{1/4}/V(x)$
for every
deterministic $V(x)\to\infty$.\\
Caich (2023 preprint; September 2026) \cite{Caich}
& R, S & $|\mathcal M(x)|\ll\sqrt{x}(\log_2x)^{3/4+\varepsilon}$.\\
Durkan--Pearce-Crump (July 2026) \cite{DPC}
& R, S & $|\mathcal M(x)|\ll\sqrt{x}(\log_2x)^{1/4+\varepsilon}$.\\
Verreault (August 2026; rev. September 2026) \cite{Verreault}
& R, S & $|\mathcal M(x)|\ll\sqrt{x}(\log_2x)^{1/4}G(\log_3x)$
if $G$ is eventually nondecreasing and
$\sum_{h\ge3}h\log h/G(h)^2<\infty$.
One may take $G(t)=t^{1+\varepsilon}$.\\
\end{longtable}
}

\subsubsection{Restricted sums and related almost sure results}
The difficulties in studying the full sum have also led to the study of sums restricted by the largest prime factor. Put
\[
M_{>}(x)=\sum_{\substack{n\le x\\P(n)>\sqrt{x}}}f(n),
\qquad
M_{\mathrm S,>}(x)=
\sum_{\substack{n\le x\\P(n)>\sqrt{x}}}g(n),
\]
where $P(n)$ is the largest prime factor of $n$, with $P(1)=1$.
Mastrostefano \cite{Mastro} proved, for both models, that
\[
|\mathcal M_{>}(x)|\ll\sqrt{x}(\log_2x)^{1/4+\varepsilon},
\]
where $\mathcal M_{>}$ denotes the corresponding restricted sum.

Buraczewski and Iksanov
\cite[Theorem~1.1 and Corollary~1.2]{BIRestricted}
prove a self-normalized law of the iterated logarithm for
the restricted Rademacher sum \(M_{>}(x)\). With
\[
Q_{>}(x)=\sum_{\sqrt{x}<p\le x}M(x/p)^2,
\]
they show that, almost surely,
\[
\limsup_{x\to\infty}
\frac{M_{>}(x)}{\sqrt{2Q_{>}(x)\log_2Q_{>}(x)}}=1,
\qquad
\liminf_{x\to\infty}
\frac{M_{>}(x)}{\sqrt{2Q_{>}(x)\log_2Q_{>}(x)}}=-1.
\]
The cluster set of the normalized sum is \([-1,1]\).
The present paper treats the full sum \(M(x)\), whose
predictable quadratic variation includes all prime scales.

Shala \cite{Shala} developed a framework for almost sure large
fluctuations over arithmetic subsets in regimes where the sums
are approximately Gaussian, including admissible short intervals
and polynomial value sets. Gou and Xu \cite{GX} proved large
fluctuations of both signs for the extended Rademacher model.
That model is completely multiplicative and is nonzero on squares;
the result concerns Erd\H{o}s Problem~1144 and must be
distinguished from the squarefree model considered here.

Almost sure bounds have also been investigated for weighted sums. Aymone, Heap and Zhao
\cite{AHZ} and Hardy \cite{HardyWeighted} studied almost sure
bounds for $\sum_{n\le x}g(n)/\sqrt n$, while Atherfold
\cite{Atherfold} obtained almost sure estimates for
$\sum_{n\le x}f(n)/\sqrt n$. These sums have different
normalizations from the unweighted sum. For squarefree Rademacher
functions, Aymone, Heap and Zhao \cite{AHZsign} also proved
that $M(x)$ changes sign infinitely often. Our theorem determines the size of both positive and negative excursions relative to $Q(x)$.

Laws of the iterated logarithm for random Dirichlet series were obtained by Aymone, Fr\'ometa and Misturini \cite{AFM}, by Buraczewski, Dong, Iksanov and Marynych \cite{BDIM}, by Dong and Iksanov \cite{DI} and by Iksanov and Kostohryz
\cite{IK}. These results concern series of independent variables
and a different limiting parameter; \cite{DI} also gives
consequences for the logarithm of the Rademacher random Dirichlet
series near its convergence boundary. For general deterministic
weights, \cite{IK} gives conditions for the classical variance
normalization and examples showing that variance growth alone
need not suffice. They do not yield a law of the iterated logarithm for the unweighted partial sum $M(x)$.

\subsubsection{Moments and distributional limits}

The moments and distributional limits of these sums have been studied in parallel with their almost sure bounds.
Harper's work on low moments \cite{HarperLow} established,
for both models,
\[
\mathbb E|\mathcal M(x)|\asymp
\frac{\sqrt{x}}{(\log_2x)^{1/4}},
\]
and connected the sums with critical multiplicative chaos.
By Markov's inequality,
\[
\frac{\mathcal M(x)}{\sqrt{x}}
\xrightarrow{\mathbb P}0,
\]
although $\mathbb E M(x)^2\sim6x/\pi^2$ and
$\mathbb E|M_{\mathrm S}(x)|^2=\lfloor x\rfloor$.
Thus the square-root normalization gives a degenerate
distributional limit.
For an account of the moment methods, including high moments, we refer to \cite{HarperSurvey}.

\paragraph{Gaussian mixtures in the Steinhaus model}
For the Steinhaus model, Hardy \cite{HardyRestricted} proved convergence in distribution
of $(\log_2x)^{1/4}/\sqrt{x} \cdot M_{\mathrm S,>}(x)$ to a
complex Gaussian scale mixture. Gorodetsky and Wong \cite{GW}
subsequently established the corresponding limit for the full sum:
\[
\frac{(\log_2x)^{1/4}}{\sqrt{x}}M_{\mathrm S}(x)
\xrightarrow{\mathrm d}\sqrt{V_\infty}\,G,
\]
where $G$ is a standard complex Gaussian with
$\mathbb E|G|^2=1$, independent of the almost surely finite,
strictly positive random variable $V_\infty$. This variable is defined by an integral with respect to a critical multiplicative-chaos measure. Hardy's restricted sum has the same deterministic
scaling but its own limiting random variance.
The limiting variances in these Gaussian mixtures are random.

\paragraph{Central limit theorems in short intervals}
For squarefree Rademacher functions, Chatterjee and Soundararajan
\cite{ChatterjeeSoundararajan} proved a central limit theorem
in the range $x^{1/5}\log x\ll h=o(x/\log x)$.
Soundararajan and Xu \cite[Corollary~9.2]{SoundararajanXu} enlarged the range;
in particular, for every fixed $\varepsilon>0$, it includes
\[
x^{1/5}\log x\ll h\le
\frac{x}{(\log x)^{2\log 2-1+\varepsilon}}.
\]
Put
\[
V_{\mathrm R}(x,h)=
\#\{x<n\le x+h:\ n\text{ is squarefree}\}.
\]
Then
\[
\frac{1}{\sqrt{V_{\mathrm R}(x,h)}}
\sum_{x<n\le x+h}f(n)
\xrightarrow{\mathrm d}\mathcal N(0,1).
\]
Soundararajan and Xu also give general criteria for Gaussian
limits over subsets, based on controlling non-diagonal
contributions to the fourth moment.

For Steinhaus functions, Harper, Soundararajan and Xu \cite{HSX}
proved a Gaussian limit throughout the range $h\to\infty$
and $h=o(x)$. There is a deterministic scaling factor
$V_{\mathrm S}(x,h)$ such that
\[
\frac{1}{\sqrt{V_{\mathrm S}(x,h)}}
\sum_{x<n\le x+h}g(n)
\xrightarrow{\mathrm d}\mathcal N_{\mathbb C}(0,1),
\]
with
\[
V_{\mathrm S}(x,h)\asymp
h\min\!\biggl\{1,
\frac{\log(x/h)}{\sqrt{\log_2x}}\biggr\}.
\]

Xu \cite{XuSupport} further showed that, for subsets
$A=A_N\subset[1,N]$, a standard complex Gaussian limit for
$|A|^{-1/2}\sum_{n\in A}g(n)$ requires $|A|=o(N)$.
Thus both the support and the normalization enter the conditions for a Gaussian limit.

Dong and Iksanov \cite{DI} also obtained a functional central
limit theorem for random Dirichlet series summed over primes.
The underlying variables are independent. As in their LIL, the limiting parameter differs from the one used for unweighted partial sums.

\subsection{Notation and organization of the proof}
For \(y\ge1\), write
\[
\Psi(u,y)=\E[M(u)\mid\FF_y]= \sum_{\substack{n\le u\\P(n)\le y}}f(n),\qquad
\Psi(u,y^-)=\E[M(u)\mid\FF_{y-}]=\sum_{\substack{n\le u\\P(n)<y}}f(n).
\]
 In particular, \(a_p(x)=\Psi(x/p,p^-)\).

Section~\ref{sec:number-theory} collects the deterministic
number-theoretic estimates. Section~\ref{sec:upper} proves the
upper bound on a logarithmic grid and reduces interpolation to
Proposition~\ref{prop:interpolation}, proved in
Appendix~\ref{sec:interpolation}. The interpolation controls both
the sum and the full coefficient vector.

Section~\ref{sec:loweroutline} gives the proof of the lower bounds
in consecutive steps. We divide the primes into thin layers and
replace each coefficient by one determined before its layer is
exposed. We estimate the discarded terms and compare the full
quadratic variation with that of the retained sum. We then select
nested sets of endpoints: the thresholds must produce the required
signed gain while leaving a nonempty set after the last layer.
This extends the endpoint-selection method of
\cite[Sections~4, 6 and~7]{BIRestricted} to all significant prime scales.
The auxiliary estimates are collected in
Appendix~\ref{app:lower}. They concern the discarded terms,
the variation of the layer variances, the covariances, and a
quantitative counting estimate. Corollary~\ref{cor:cluster} is proved at the end of Section~\ref{sec:loweroutline}.

Section~\ref{sec:deterministic} proves the results on deterministic
normalization. Theorem~\ref{dn:clock} uses Harper's low-moment
estimates and Burkholder's square-function inequality, independently
of the LIL. Theorem~\ref{dn:critical} uses the simultaneous signed
block exceedances and the comparison with the full quadratic
variation from the lower-bound proof. The proof of
Proposition~\ref{dn:zero-one} is based on an exact identity for
changing one prime sign and Kolmogorov's zero--one law.

\section{Number-theoretic estimates}\label{sec:number-theory}

This section collects the deterministic estimates used in the
proofs. Our main reference for the number-theoretic estimates used
below is Montgomery and Vaughan~\cite{MV}. For background
on smooth integers and Rankin's method, we also refer to
Hildebrand and Tenenbaum~\cite{HT}.

Sums and products indexed by \(p\) run over primes. Write
\(\pi(v)\) for the prime-counting function and
\[
\vartheta(v)=\sum_{p\le v}\log p.
\]
For \(n\ge2\), let \(P(n)\) and \(P^-(n)\) be its largest
and smallest prime factors, respectively; set
\(P(1)=1\) and \(P^-(1)=\infty\). We write \(\mu\) for the
M\"obius function and \(\omega(n)\) for the number of distinct
prime factors of \(n\). Thus \(\mu^2(n)\) is the indicator
that \(n\) is squarefree. Put
\[
\sfcount(v,y)
=\sum_{\substack{n\le v\\P(n)\le y}}\mu^2(n).
\]
An integer is called \(y\)-smooth if \(P(n)\le y\), and
\(y\)-rough if \(P^-(n)>y\). Every squarefree integer has
a unique factorization into a squarefree \(y\)-rough part
and a squarefree \(y\)-smooth part. These two parts are
coprime. We use this factorization when conditioning on the
signs at primes at most \(y\).
Also, every squarefree \(n>1\) has a unique representation
\(n=pm\), with \(p=P(n)\) and \(P(m)<p\).

\begin{lemma}\label{lem:prime-estimates}
There is an absolute constant \(c>0\) such that
\begin{equation}\label{eq:pnt}
\vartheta(v)=v+O\bigl(ve^{-c\sqrt{\log v}}\bigr),
\qquad v\ge2.
\end{equation}
In particular, \(\pi(v)\sim v/\log v\). Moreover,
uniformly for \(y\ge3\),
\begin{equation}
\sum_{p\le y}\frac1p=\log_2y+O(1), \qquad
\sum_{p\le y}\frac{\log p}{p}\ll\log y, \qquad
\sum_{p\le y}\frac{(\log p)^2}{p}\ll(\log y)^2,
\label{eq:prime-harmonic}
\end{equation}
\begin{equation}
\prod_{p\le y}(1+p^{-1})\asymp\log y, \qquad
\prod_{p\le y}(1-p^{-1})\asymp(\log y)^{-1}, \qquad
\sum_{p>y}\frac1{p\log p}\ll\frac1{\log y}.
\label{eq:prime-products}
\end{equation}
\end{lemma}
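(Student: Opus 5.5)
The estimate \eqref{eq:pnt} is the prime number theorem with the classical de la Vallée Poussin error term. We take it from \cite{MV} and do not reprove it. We derive \(\pi(v)\sim v/\log v\) from \eqref{eq:pnt} by partial summation. Writing \(\pi(v)=\int_{2^-}^{v}\dd\vartheta(u)/\log u\) and integrating by parts gives
\[
\pi(v)=\frac{\vartheta(v)}{\log v}+\int_2^v\frac{\vartheta(u)}{u(\log u)^2}\dd u .
\]
Since \(\vartheta(u)\ll u\), the integral is \(O(v/(\log v)^2)\), and the claim follows.

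All remaining estimates follow from Chebyshev-type information, namely \(\vartheta(v)\asymp v\) and, more precisely, Mertens' first theorem \(\sum_{p\le y}(\log p)/p=\log y+O(1)\). The latter comes from expanding \(\log(\lfloor y\rfloor!)=\sum_{n\le y}\Lambda(n)\lfloor y/n\rfloor\) and applying Stirling's formula; it is also a standard result in \cite{MV}.

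\emph{Harmonic sums.} The middle bound in \eqref{eq:prime-harmonic} is Mertens' first theorem itself. For the third bound, use \((\log p)^2/p\le \log y\cdot(\log p)/p\) for \(p\le y\), which gives \(\ll(\log y)^2\). For the first bound, set \(A(u)=\sum_{p\le u}(\log p)/p=\log u+r(u)\) with \(r(u)=O(1)\). Partial summation then gives
\[
\sum_{p\le y}\frac1p=\frac{A(y)}{\log y}+\int_2^y\frac{A(u)}{u(\log u)^2}\dd u
=\log_2 y+O(1).
\]
Here \(\int_2^y \dd u/(u\log u)=\log_2y-\log_22\), and the contribution of \(r\) is bounded because \(\int_2^\infty \dd u/(u(\log u)^2)<\infty\).

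\emph{Products.} Take logarithms. For the first product,
\[
\log\prod_{p\le y}(1+p^{-1})=\sum_{p\le y}\frac1p+O\Big(\sum_p p^{-2}\Big)=\log_2y+O(1),
\]
so the product is \(\asymp\log y\). For the second, \(\log(1-p^{-1})=-p^{-1}+O(p^{-2})\), and the same computation gives \(\asymp(\log y)^{-1}\). Uniformity in \(y\ge3\) is automatic because every error term is an absolutely bounded constant. We do not need Mertens' constant \(e^{-\gamma}\), since only \(\asymp\) is claimed.

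\emph{Tail sum.} We apply partial summation with \(A\) as above, now on \((y,\infty)\), with weight \(g(u)=1/(u\log u)\cdot u/\log u=(\log u)^{-2}\). Write
\[
\sum_{p>y}\frac{1}{p\log p}=\sum_{p>y}\frac{\log p}{p}\cdot\frac1{(\log p)^2}
=-\frac{A(y)}{(\log y)^2}+\int_y^\infty \frac{2A(u)}{u(\log u)^3}\dd u .
\]
The boundary term at infinity vanishes. Inserting \(A(u)\ll\log u\) bounds the integral by \(\ll\int_y^\infty \dd u/(u(\log u)^2)=1/\log y\). The boundary term at \(y\) is nonpositive and \(O(1/\log y)\), so the total is \(\ll1/\log y\).

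No step is a genuine obstacle. The only delicate point is the order of dependencies: \eqref{eq:pnt} is cited rather than proved, and everything else needs only Mertens-strength input. The place where a careless argument might lose uniformity is the bounded remainder \(r(u)\) in the partial-summation integrals. Those integrals converge absolutely, which keeps all the \(O(1)\) terms independent of \(y\).
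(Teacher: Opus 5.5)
Your proposal is correct and follows essentially the same route as the paper: cite the de la Vall\'ee Poussin form of the prime number theorem from \cite{MV}, obtain the harmonic sums and the tail sum $\sum_{p>y}1/(p\log p)$ by partial summation, and get the two products by taking logarithms and using $\sum_p p^{-2}<\infty$. The only difference is minor: you feed Mertens' estimate $\sum_{p\le u}(\log p)/p=\log u+O(1)$ into the partial summation, including for the tail sum, whereas the paper uses \eqref{eq:pnt} and $\pi(v)\ll v/\log v$. Your choice shows that everything except \eqref{eq:pnt} itself needs only Chebyshev--Mertens strength input.
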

\begin{proof}
Equation~\eqref{eq:pnt} follows from the prime number theorem
in de la Vall\'ee Poussin's form
\cite[Theorem~6.9]{MV}.
Partial summation gives \(\pi(v)\sim v/\log v\) and
\eqref{eq:prime-harmonic}. For the classical Mertens
estimates, see also \cite[Theorem~2.7]{MV}.
Since \(\sum_p p^{-2}<\infty\), taking logarithms gives
the two product estimates in \eqref{eq:prime-products}.
Finally, partial summation and \(\pi(v)\ll v/\log v\)
give
\[
\sum_{p>y}\frac1{p\log p}
\ll\int_y^\infty\frac{\dd v}{v(\log v)^2}
=\frac1{\log y}.
\]
\end{proof}

\begin{lemma}\label{lem:osc-prime-sum}
Uniformly for \(3\le z\le U\) and real \(v\),
\begin{equation}\label{eq:primesosc}
\sum_{z<p\le U}p^{-1-iv}
=\int_{\log z}^{\log U}\frac{e^{-ivs}}s\dd s
+O\bigl((1+|v|)e^{-c\sqrt{\log z}}\bigr),
\end{equation}
where \(c>0\) is absolute.
\end{lemma}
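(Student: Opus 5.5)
The plan is to derive \eqref{eq:primesosc} from the prime number theorem in the form \eqref{eq:pnt}, by partial summation against \(\vartheta\). Write
\[
\sum_{z<p\le U}p^{-1-iv}
=\int_{z}^{U}\frac{v_0^{-1-iv}}{\log v_0}\dd\vartheta(v_0),
\]
where \(v_0\) is the integration variable. Then split \(\vartheta(v_0)=v_0+E(v_0)\) with \(E(v_0)\ll v_0e^{-c\sqrt{\log v_0}}\) by Lemma~\ref{lem:prime-estimates}.

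The main-term part gives
\[
\int_z^U\frac{v_0^{-1-iv}}{\log v_0}\dd v_0 .
\]
The substitution \(s=\log v_0\) turns this exactly into
\[
\int_{\log z}^{\log U}\frac{e^{-ivs}}{s}\dd s,
\]
so no error arises here.

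For the error part, integrate \(\int_z^U g(v_0)\dd E(v_0)\) by parts, with \(g(v_0)=v_0^{-1-iv}/\log v_0\). This produces:
\begin{itemize}
\item boundary terms \(g(U)E(U)-g(z)E(z)\), each bounded by \(e^{-c\sqrt{\log z}}/\log z\), using that \(e^{-c\sqrt{\log v_0}}\) decreases in \(v_0\);
\item an integral \(-\int_z^U E(v_0)g'(v_0)\dd v_0\).
\end{itemize}
Since
\[
|g'(v_0)|\ll\frac{1+|v|}{v_0^{2}\log v_0},
\]
the integral is at most
\[
(1+|v|)\int_z^\infty\frac{e^{-c\sqrt{\log v_0}}}{v_0\log v_0}\dd v_0 .
\]
Substituting \(t=\sqrt{\log v_0}\), so that \(\dd v_0/(v_0\log v_0)=2\dd t/t\), this becomes
\[
2\int_{\sqrt{\log z}}^\infty e^{-ct}\,\frac{\dd t}{t}\ll e^{-c\sqrt{\log z}},
\]
because \(\sqrt{\log z}\ge\sqrt{\log 3}\) is bounded below. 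This bound is uniform in \(U\) and \(v\), apart from the explicit factor \(1+|v|\).

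The only step needing care is the Stieltjes integration by parts: one has to check that \(\vartheta\) jumping at primes causes no endpoint issues. Taking the interval as \((z,U]\) and writing \(\vartheta(U)-\vartheta(z)\) handles this directly. Once that is in place, the estimate follows with the same constant \(c\) as in \eqref{eq:pnt}, or any smaller one.
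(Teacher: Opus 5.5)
Your proof is correct. The substitution $s=\log t$ turns the main term into the stated integral exactly. Stieltjes integration by parts against $E=\vartheta-\mathrm{id}$ then gives boundary terms $\ll e^{-c\sqrt{\log z}}/\log z$ and an integral term $\ll(1+|v|)e^{-c\sqrt{\log z}}$. The factor $1+|v|$ comes only from differentiating $t^{-iv}$, and your substitution $w=\sqrt{\log t}$ handles the tail integral cleanly.

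The paper does not carry out this computation. It simply quotes Lemma~6.1 of \cite{BIRestricted}, with lower cutoff $z$ and upper cutoff $U$. What your route buys is self-containedness: the lemma then rests only on \eqref{eq:pnt} from Lemma~\ref{lem:prime-estimates}. Your argument is also the same partial-summation computation the paper performs itself, at frequency zero and on short intervals, in the proof of Lemma~\ref{lem:prime-averaging}. The paper's citation is shorter.
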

\begin{proof}
This is \cite[Lemma~6.1]{BIRestricted}, with lower cutoff \(z\)
and upper cutoff \(U\).
\end{proof}

\begin{lemma}\label{lem:prime-averaging}
Let \(0<\delta\le1/2\), \(a\ge3\), and \(b>a\). Define
\[
w_{a,b,\delta}(t)
=\sum_{\substack{a<p\le b\\p\le t\le(1+\delta)p}}
\frac1{\delta p}.
\]
If
\((1+\delta)a<t\le b\), then
\begin{equation}\label{eq:prime-averaging}
w_{a,b,\delta}(t)
=\frac{1+O\bigl(\delta+
\delta^{-1}e^{-c\sqrt{\log t}}\bigr)}{\log t}.
\end{equation}
The same conclusions hold with \(b=\infty\).
\end{lemma}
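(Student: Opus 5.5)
The approach is to rewrite the condition \(p\le t\le(1+\delta)p\) as a condition on \(p\) alone, namely \(t/(1+\delta)\le p\le t\). Then
\[
w_{a,b,\delta}(t)=\frac1\delta\sum_{\substack{a<p\le b\\ t/(1+\delta)\le p\le t}}\frac1p .
\]
Since \((1+\delta)a<t\le b\), we have \(t/(1+\delta)>a\) and \(t\le b\). So the constraint \(a<p\le b\) is automatically satisfied on the interval \([t/(1+\delta),t]\), and we may drop it. This also shows that the value of \(b\) plays no role, which covers the case \(b=\infty\). It therefore suffices to estimate
\[
S(t)=\sum_{t/(1+\delta)\le p\le t}\frac1p
\]
and to show that \(S(t)=\delta(1+O(\delta+\delta^{-1}e^{-c\sqrt{\log t}}))/\log t\).

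To evaluate \(S(t)\), we use partial summation against \(\vartheta\), writing \(1/p=(\log p)/(p\log p)\), with \(g(v)=1/(v\log v)\) and \(u=t/(1+\delta)\). By \eqref{eq:pnt},
\[
S(t)=\int_{u^-}^{t}g(v)\dd\vartheta(v)=\int_u^t\frac{\dd v}{v\log v}+E,
\]
where \(E\) collects the boundary terms \(g(v)(\vartheta(v)-v)\) at \(v=u,t\) and the integral \(\int_u^t|g'(v)|\,|\vartheta(v)-v|\dd v\). Both boundary terms are \(\ll e^{-c\sqrt{\log u}}/\log u\). For the integral, \(|g'(v)|\ll 1/(v^2\log v)\), so it is \(\ll \delta e^{-c\sqrt{\log u}}/\log u\). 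Because \(\delta\le1/2\), we have \(\log u=\log t+O(1)\) and \(\sqrt{\log u}\ge\sqrt{\log t}-O(1)\). Adjusting \(c\), this gives \(E\ll e^{-c\sqrt{\log t}}/\log t\).

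Next, the main term is
\[
\int_u^t\frac{\dd v}{v\log v}=\log\frac{\log t}{\log t-\log(1+\delta)}.
\]
Since \(\log(1+\delta)=\delta+O(\delta^2)\) and \(\log t\ge\log(3(1+\delta))\) is bounded below, expanding gives
\[
\frac{\log(1+\delta)}{\log t}\Bigl(1+O\Bigl(\frac{\delta}{\log t}\Bigr)\Bigr)=\frac{\delta(1+O(\delta))}{\log t}.
\]
Dividing by \(\delta\) yields \(w=(1+O(\delta))/\log t+O(\delta^{-1}e^{-c\sqrt{\log t}}/\log t)\), which is \eqref{eq:prime-averaging}.

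The only delicate point is the handling of the error terms for small \(t\), where \(\log u\) could be close to \(0\). This is harmless because \(u>a\ge3\), so \(\log u\ge\log 3\). With that, every \(O\)-constant is absolute, and the \(e^{-c\sqrt{\log v}}\) factor for \(v\in[u,t]\) is comparable to \(e^{-c'\sqrt{\log t}}\).
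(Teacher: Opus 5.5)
Your proof is correct and follows the paper's argument: you use $(1+\delta)a<t\le b$ to drop the constraint $a<p\le b$ (so $b=\infty$ is covered), then apply partial summation against $\vartheta$ with the de la Vall\'ee Poussin error term, and extract the main term $\log(1+\delta)/(\delta\log t)$. The differences are cosmetic. You evaluate $\int_u^t \dd v/(v\log v)$ exactly, whereas the paper replaces $\log v$ by $\log t+O(\delta)$. You also keep a possible prime at $p=t/(1+\delta)$ through $\int_{u^-}$; the paper's partial summation over $v_0<p\le t$ omits it, harmlessly.
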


\begin{proof}
Suppose first that \((1+\delta)a<t\le b\), and put
\(v_0=t/(1+\delta)\). The conditions on \(t\) imply that
\(a<v_0\le t\le b\). Hence the restrictions involving \(a\)
and \(b\) can be omitted, and
\[
w_{a,b,\delta}(t)
=\frac1\delta\sum_{v_0\le p\le t}\frac1p.
\]
Write \(\vartheta(v)=v+R(v)\). By \eqref{eq:pnt},
\(
|R(v)|\le Cv e^{-c\sqrt{\log v}}.
\)
Partial summation gives
\[
\sum_{v_0<p\le t}\frac1p
=\int_{v_0}^t\frac{\dd v}{v\log v}
+\frac{R(t)}{t\log t}
-\frac{R(v_0)}{v_0\log v_0}
+\int_{v_0}^t
R(v)\frac{1+\log v}{v^2(\log v)^2}\dd v.
\]
Since \(2t/3\le v_0\le t\), the two boundary terms and
the last integral are
\(O(e^{-c\sqrt{\log t}}/\log t)\), after reducing \(c\)
if necessary.  Therefore,
\[
w_{a,b,\delta}(t)
=\frac1\delta\int_{v_0}^t\frac{\dd v}{v\log v}
+O\bigg(\frac{e^{-c\sqrt{\log t}}}{\delta\log t}\bigg).
\]

For \(v_0\le v\le t\), we have
\(\log v=\log t+O(\delta)\). Consequently,
\[
\frac1\delta\int_{v_0}^t\frac{\dd v}{v\log v}
=\frac{1+O(\delta)}{\delta\log t}
  \int_{v_0}^t\frac{\dd v}{v}
=\frac{1+O(\delta)}{\log t}
  \frac{\log(1+\delta)}{\delta}
=\frac{1+O(\delta)}{\log t}.
\]
This proves \eqref{eq:prime-averaging}.

For an arbitrary \(t\) in the support, the primes contributing
to \(w_{a,b,\delta}(t)\) form a subset of those in
\([t/(1+\delta),t]\). Thus
\[
0\le w_{a,b,\delta}(t)
\le\frac1\delta
   \sum_{t/(1+\delta)\le p\le t}\frac1p
\le\frac{C\bigl(1+\delta^{-1}
e^{-c\sqrt{\log t}}\bigr)}{\log t},
\]
by the same calculation. The argument is unchanged when \(b=\infty\).
\end{proof}

\begin{lemma}\label{lem:rankin}
For every fixed \(A>0\), uniformly for \(v\ge1\) and \(\log y>A\),
with \(\sigma=1-A/\log y\),
\begin{equation}\label{eq:rankin-fixed}
\sfcount(v,y)
\le C_Av(\log y)
\exp\bigg(-\frac{A\log v}{\log y}\bigg).
\end{equation}
\end{lemma}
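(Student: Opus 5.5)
This is Rankin's trick applied with exponent \(\sigma=1-A/\log y\). Since \(v/n\ge1\) for every \(n\le v\), and \(\sigma\in(0,1)\) because \(\log y>A\), we have \(\1_{n\le v}\le (v/n)^\sigma\). Dropping the restriction \(n\le v\) and using multiplicativity over squarefree \(y\)-smooth integers gives
\[
\sfcount(v,y)\le v^\sigma\sum_{P(n)\le y}\frac{\mu^2(n)}{n^\sigma}
= v^\sigma\prod_{p\le y}\bigl(1+p^{-\sigma}\bigr)
\le v^\sigma\exp\Bigl(\sum_{p\le y}p^{-\sigma}\Bigr).
\]
The factor \(v^\sigma\) equals \(v\exp(-A\log v/\log y)\), which is exactly the target shape. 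It therefore remains to show
\[
\sum_{p\le y}p^{-\sigma}\le\log_2y+O_A(1),
\]
since then the product is \(\ll_A\log y\).

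For this, write \(p^{-\sigma}=p^{-1}\exp(A\log p/\log y)\). For \(p\le y\) the exponent \(A\log p/\log y\) lies in \([0,A]\). Using \(e^u\le 1+u e^{u}\le 1+e^A u\) on \([0,A]\), we get
\[
\sum_{p\le y}p^{-\sigma}\le\sum_{p\le y}\frac1p+\frac{Ae^A}{\log y}\sum_{p\le y}\frac{\log p}{p}.
\]
By \eqref{eq:prime-harmonic}, the first sum is \(\log_2y+O(1)\) and the second is \(O(\log y)\). So the correction term is \(O(Ae^A)=O_A(1)\), uniformly in \(y\) with \(\log y>A\). Here \(y\ge e^A\), so Mertens' estimate applies once \(y\ge3\); small \(y\) only affect constants.

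Combining the two steps gives
\[
\sfcount(v,y)\le v^{\sigma}\,e^{O_A(1)}\log y
= C_A\,v(\log y)\exp\bigl(-A\log v/\log y\bigr),
\]
which is \eqref{eq:rankin-fixed}. The argument is routine. The only point needing care is that the bound on \(\sum_{p\le y}p^{-\sigma}\) is uniform in \(y\), and this holds because \(A\) is fixed, so \(e^{A}\) is a constant absorbed into \(C_A\).
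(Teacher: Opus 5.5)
Your proof is correct and follows the paper's argument: Rankin's bound \(\sfcount(v,y)\le v^\sigma\prod_{p\le y}(1+p^{-\sigma})\), followed by controlling the excess over \(\sigma=1\) through \(e^u-1\le ue^u\) on \([0,A]\) and \(\sum_{p\le y}(\log p)/p\ll\log y\). The only cosmetic difference is that you pass through \(1+x\le e^x\) and Mertens' sum \(\sum_{p\le y}1/p=\log_2y+O(1)\), whereas the paper bounds the logarithm of the ratio \(\prod_{p\le y}(1+p^{-\sigma})/\prod_{p\le y}(1+p^{-1})\) and then uses the Mertens product \(\prod_{p\le y}(1+p^{-1})\asymp\log y\).
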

\begin{proof}
For \(\sigma>0\), Rankin's method
\cite[\S1, (1.3)]{HT}, \cite[(7.17), p.~204]{MV} gives
\begin{equation}\label{eq:rankin-general}
\sfcount(v,y)
\le
\sum_{\substack{n\le v\\P(n)\le y}}
\mu^2(n)\bigg(\frac vn\bigg)^\sigma
\le
v^\sigma\prod_{p\le y}(1+p^{-\sigma}).
\end{equation}
Applying Lemma \ref{lem:prime-estimates}, we write
\[
\log\frac{\prod_{p\le y}(1+p^{-\sigma})}
{\prod_{p\le y}(1+p^{-1})}
=
\sum_{p\le y}
\log\bigg(1+\frac{p^{A/\log y}-1}{p+1}\bigg)\le
\frac{Ae^A}{\log y}
\sum_{p\le y}\frac{\log p}{p}
\le C_A.
\]
Exponentiating and applying again  Lemma \ref{lem:prime-estimates}
gives
\[
\prod_{p\le y}(1+p^{-\sigma})
\le e^{C_A}\prod_{p\le y}(1+p^{-1})
\ll_A\log y.
\]
Combining this with \eqref{eq:rankin-general} proves
\eqref{eq:rankin-fixed}.
\end{proof}

\section{Proof of the upper bound}\label{sec:upper}
We prove the upper bound on
logarithmic blocks   {\([Y,Y^2]\)}, where {\(Y=\exp(2^r)\)}  for positive integers
\(r\). On each block, choose a geometric grid with logarithmic spacing   {\((\log Y)^{-\alpha}\)} (see \eqref{eq:coarse} below). At a fixed grid point \(z\), by representation
\eqref{eq:decomp}, the partial sums obtained by revealing the prime signs in \(M(z)-1\) form a
martingale with predictable quadratic variation \(Q(z)\).
An exponential martingale inequality, followed by a union bound over the
grid, gives the desired upper estimate there. This is proved in Proposition~\ref{prop:gridupper}.

It remains to extend the estimate to every real \(x\) in
the block. We choose a nearby grid point \(z\le x\) and control both
\(M(x)-M(z)\) and the distance
\begin{equation}\label{eq:distance}
d(z,x):=\|a(x)-a(z)\|_2,\qquad a(x):=(a_p(x))_p.
\end{equation}
For fixed \(x\) the vector \(a(x)\) has finite support, because the
sum defining \(a_p(x)\) is empty when \(p>x\). We shall also use the representation
\begin{equation}\label{eq:vector}
a(x)=\sum_{\substack{2\le n\le x\\n\ \mathrm{squarefree}}}
f(n/P(n))e_{P(n)}.
\end{equation}
Here \(e_p\) is the \(p\)th coordinate vector in \(\ell^2(\cP)\).
In particular, \(Q(x)=\|a(x)\|_2^2\). The distance \(d(z,x)\)
also controls the change in the predictable variance, since
\(|\sqrt{Q(x)}-\sqrt{Q(z)}|\le d(z,x)\).
A finer auxiliary grid and a fourth-moment maximal estimate give uniform control in \(x\) (Proposition~\ref{prop:interpolation}).
Finally, a lower bound for \(Q(x)\) (Lemma~\ref{lem:clocklower}) shows that
the interpolation errors are negligible on the scale used in the LIL.

Fix \(0<\eta<1/2\) and take \(\alpha=\eta/2\).
For each integer \(r\ge1\), set   {\(Y=\exp(2^r)\)}. Define

\begin{equation}\label{eq:coarse}
h=(\log Y)^{-\alpha},\qquad
\cC_Y=\{Ye^{kh}:0\le k\le\lfloor (\log Y)/h\rfloor\}\cup\{Y^2\}.
\end{equation}

Then {\(|\cC_Y|\le C(\log Y)^{1+\alpha}\)}, and every   {\(x\in[Y,Y^2]\)}
is within logarithmic distance at most \(h\) of a point of   {\(\cC_Y\)}
below it. Since \(h\to0\), the grid becomes
multiplicatively finer as   {\(Y\to\infty\)}. This property will be used in the interpolation argument.

\begin{proposition}\label{prop:gridupper}
For each fixed \(0<\eta<1/2\), almost surely, for all sufficiently large
\(r\) and all   {\(z\in\cC_{\exp(2^r)}\)},
\begin{equation}\label{eq:gridupper}
|M(z)-1|\le(1+\eta)\sqrt{2Q(z)\log_2z}.
\end{equation}
\end{proposition}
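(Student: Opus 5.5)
The plan is to combine the exact exponential supermartingale attached to the prime-sign martingale with a peeling argument over the size of \(Q(z)\). Finiteness of the union bound will come from the choice \(\alpha=\eta/2\) together with \(\log Y=2^r\).

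\emph{Step 1 (exponential supermartingale).} Fix a grid point \(z\) and \(\lambda\in\RR\), and put \(S_y=\sum_{p\le y}\varepsilon_pa_p(z)\). Since \(a_p(z)\) is \(\FF_{p-}\)-measurable and \(\varepsilon_p\) is a Rademacher variable independent of \(\FF_{p-}\),
\[
\E\bigl[e^{\lambda\varepsilon_pa_p(z)}\bigm|\FF_{p-}\bigr]=\cosh(\lambda a_p(z))\le e^{\lambda^2a_p(z)^2/2}.
\]
Hence \(\exp\bigl(\lambda S_y-\tfrac{\lambda^2}{2}\sum_{q\le y}a_q(z)^2\bigr)\) is a supermartingale in \(y\). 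Only finitely many primes \(p\le z\) are involved, so \(\E\exp(\lambda(M(z)-1)-\lambda^2Q(z)/2)\le1\) for every deterministic \(\lambda\). No boundedness of the increments is needed here, because the increments are conditionally symmetric \(\pm a_p\).

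\emph{Step 2 (peeling in \(Q\)).} Put \(L=\log_2z\) and fix a small \(\theta>0\) (say \(\theta=\eta/4\)). On the event \(\{q\le Q(z)<(1+\theta)q\}\), take \(\lambda=\sqrt{2L/q}\). The event \(\{M(z)-1\ge(1+\eta)\sqrt{2Q(z)L}\}\) then forces
\[
\lambda(M(z)-1)-\tfrac{\lambda^2}{2}Q(z)\ge 2(1+\eta)L-(1+\theta)L.
\]
Markov's inequality therefore bounds the probability of this piece by \((\log z)^{-(1+2\eta-\theta)}\), and the same holds for \(-(M(z)-1)\).

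To keep the number of levels small, I would confine \(Q(z)\) a.s. to a window \(z(\log z)^{-K}\le Q(z)\le z(\log z)^{K}\) for all grid points of large blocks.
\begin{itemize}
\item For the upper side, note that \(\E Q(z)=\Var M(z)\ll z\). Markov's inequality and a union bound over \(\cC_Y\), summed over \(r\), then give the bound by Borel--Cantelli.
\item For the lower side I would invoke the lower bound for \(Q\) (Lemma~\ref{lem:clocklower}, announced above). Points of \(\cC_Y\) with smaller \(Q\) must be excluded by that lemma.
\end{itemize}
Inside the window one needs only \(O_\theta(\log_2z)=O(\log_2Y)\) geometric levels of ratio \(1+\theta\).

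\emph{Step 3 (union bound and Borel--Cantelli).} For \(z\in\cC_Y\) we have \(\log z\ge\log Y=2^r\). The probability of a violation anywhere in block \(r\) is therefore at most
\[
C(\log Y)^{1+\alpha}\cdot C\log_2Y\cdot(\log Y)^{-(1+2\eta-\theta)}\ll r\,2^{-r(2\eta-\theta-\alpha)}.
\]
With \(\alpha=\eta/2\) and \(\theta=\eta/4\) the exponent equals \(5\eta/4>0\), so the sum over \(r\) is finite and Borel--Cantelli gives \eqref{eq:gridupper}.

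The main obstacle is Step 2's control of the range of \(Q(z)\). Peeling over the crude deterministic range \(1\le Q(z)\le z^2\) would cost a factor \(\log z\). That factor destroys summability, since the gain is only \((\log z)^{-2\eta}\) beyond the grid size. The upper truncation is routine; the lower one is the real input. If Lemma~\ref{lem:clocklower} were not available in the needed uniform form, I would fall back on a Robbins-type mixture \(\int\exp(\lambda S-\lambda^2Q/2)\,dF(\lambda)\). Its cost is an additive \(\log\log Q(z)\) term. I would then try to absorb this term by restricting the mixing measure to \(\lambda\) in a range \((\log z)^{\pm K}/\sqrt z\), which again requires some a priori localization of \(Q(z)\).
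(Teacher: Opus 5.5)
Your argument is correct and is essentially the paper's. Step~1 is the content of Lemma~\ref{lem:exponential}, and the remaining steps match the paper: peeling \(Q(z)\) over \(O(\log_2z)\) geometric levels, Markov's inequality for the upper truncation, and the union bound over \(\cC_Y\) with Borel--Cantelli for \(\alpha=\eta/2\). The one point you overrate is the lower end of the window, which needs no probabilistic input: \(a_p(z)=1\) for \(z/2<p\le z\) gives the deterministic bound \(Q(z)\ge\pi(z)-\pi(z/2)\ge z/(3\log z)\) (Lemma~\ref{lem:basicclock}), so appealing to the almost-sure Lemma~\ref{lem:clocklower} is valid and non-circular but unnecessary.
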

Before proving Proposition~\ref{prop:gridupper}, we establish deterministic upper and lower bounds for \(Q(x)\) and an exponential martingale inequality.

\begin{lemma}\label{lem:basicclock}
For all sufficiently large \(x\),
\begin{equation}\label{eq:basicclock}
\frac{x}{3\log x}\le Q(x)<x^2,\qquad
\E Q(x)=\#\{2\le n\le x:n\text{ squarefree}\}\sim6x/\pi^2.
\end{equation}
\end{lemma}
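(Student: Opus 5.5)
We prove the three assertions of Lemma~\ref{lem:basicclock} separately: the expectation identity, the deterministic upper bound, and the deterministic lower bound.

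\textbf{Expectation.} Expand \(\E a_p(x)^2\) using orthogonality of the \(f(m)\). For squarefree \(m\ne m'\), the product \(f(m)f(m')\) is a product of \(\varepsilon_q\) over the symmetric difference of the prime sets, so it has mean zero. Hence
\[
\E a_p(x)^2=\#\{m\le x/p:\ m\text{ squarefree},\ P(m)<p\}.
\]
Summing over \(p\) and using the unique decomposition \(n=pm\) with \(p=P(n)\) and \(P(m)<p\), valid for squarefree \(n\ge2\), gives
\[
\E Q(x)=\#\{2\le n\le x:\ n\text{ squarefree}\}.
\]
The asymptotic \(6x/\pi^2\) is the classical count of squarefree integers, obtained from \(\mu^2(n)=\sum_{d^2\mid n}\mu(d)\).

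\textbf{Upper bound.} Use the trivial bound \(|a_p(x)|\le x/p\). Then
\[
Q(x)\le\sum_{p\le x}\frac{x^2}{p^2}\le x^2\sum_p p^{-2}<0.46\,x^2<x^2 .
\]

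\textbf{Lower bound.} The key observation is that for primes \(p\in(\sqrt x,x]\) the coefficient \(a_p(x)\) is already determined sign-free. Indeed, \(m\le x/p<\sqrt x<p\) forces \(P(m)<p\) automatically, so
\[
a_p(x)=M(x/p).
\]
For \(p\in(x/2,x]\) we have \(x/p<2\), so \(a_p(x)=M(x/p)=f(1)=1\). Therefore
\[
Q(x)\ge\pi(x)-\pi(x/2).
\]
By Lemma~\ref{lem:prime-estimates}, \(\pi(v)\sim v/\log v\), so \(\pi(x)-\pi(x/2)\sim x/(2\log x)\). This exceeds \(x/(3\log x)\) for all sufficiently large \(x\).

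There is no real obstacle. The only point needing care is that this lower bound must be deterministic, holding for every realization of the signs. It is, because \(a_p(x)=1\) for \(p\in(x/2,x]\) regardless of the \(\varepsilon_q\). The constant \(1/3\) is simply a safe margin below the asymptotic value \(1/2\).
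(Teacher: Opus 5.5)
Your proposal is correct and follows the paper's proof step by step. Both use orthogonality together with the unique decomposition $n=pm$, $p=P(n)$, for $\E Q(x)$; the bound $|a_p(x)|\le x/p$ with $\sum_p p^{-2}<1$ for the upper bound; and $a_p(x)=1$ for $x/2<p\le x$, giving $Q(x)\ge\pi(x)-\pi(x/2)$, for the lower bound. Your intermediate step $a_p(x)=M(x/p)$ for $p>\sqrt{x}$ is harmless but not needed.
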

\begin{proof}
For \(x/2<p\le x\) the only summand in \(a_p(x)\) is \(m=1\), so
\(a_p(x)=1\) and hence \(Q(x)\ge\pi(x)-\pi(x/2)\), where
\(\pi(x):=\#\{p\le x:p\text{ is prime}\}\) is the prime-counting
function. Lemma~\ref{lem:prime-estimates} therefore gives
\[
Q(x)\ge\frac{x}{3\log x}.
\]
Also, \(|a_p(x)|\le x/p\), so that
\[
Q(x)\le x^2\sum_p p^{-2}<x^2.
\]
For squarefree \(m\ne n\), the product \(f(m)f(n)\) contains at least one
Rademacher sign to the first power. Hence
\begin{equation}\label{eq:orth}
\E[f(m)f(n)]=\1_{\{m=n\}}\mu^2(n).
\end{equation}
Here \(\mu\) is the M\"obius function; \(\mu^2(n)\) indicates that \(n\)
is squarefree. Expanding the squares gives
\[
\E Q(x)=\sum_p\sum_{\substack{m\le x/p\\P(m)<p}}\mu^2(m)
=\#\{2\le n\le x:n\text{ is squarefree}\},
\]
where the second equality follows from the unique representation \(n=pm\),
where \(p=P(n)\).
The final conclusion is the statement of Theorem 2.2 in \cite{MV}.
\end{proof}

The following lemma is a conditionally symmetric martingale inequality;
see, for instance, Bercu and Touati \cite[(1.4)]{BercuTouati}, and \cite[Lemma~2.4]{BIRestricted} for a short proof. Their total quadratic
variation \(\sum_p(\varepsilon_pb_p)^2\) equals \(\sum_pb_p^2\) here,
since \(\varepsilon_p^2=1\).
\begin{lemma}\label{lem:exponential}
Suppose \(b_p\) is real and \(\FF_{p-}\)-measurable, and \(b_p=0\) for all primes \(p\) outside a deterministic finite set. For deterministic \(t,v>0\),

\begin{equation}\label{eq:exponential}
\PP\biggl(\biggl|\sum_p\varepsilon_pb_p\biggr|\ge t,\
\sum_pb_p^2\le v\biggr)\le2e^{-t^2/(2v)}.
\end{equation}

\end{lemma}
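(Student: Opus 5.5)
The plan is to prove Lemma~\ref{lem:exponential} by the standard supermartingale argument for conditionally symmetric increments. Because \(b_p\) vanishes outside a deterministic finite set, the sum is a finite martingale indexed by the primes in that set, taken in increasing order. For \(\lambda\in\RR\), define
\[
Z_y(\lambda)=\exp\Bigl(\lambda\sum_{p\le y}\varepsilon_pb_p-\frac{\lambda^2}{2}\sum_{p\le y}b_p^2\Bigr).
\]
First we show that \(Z_y(\lambda)\) is a supermartingale with respect to \(\FF_y\) and that \(Z_0=1\). Condition on \(\FF_{p-}\). Since \(b_p\) is \(\FF_{p-}\)-measurable and \(\varepsilon_p\) is independent of \(\FF_{p-}\), we get
\[
\E[e^{\lambda\varepsilon_pb_p}\mid\FF_{p-}]=\cosh(\lambda b_p)\le e^{\lambda^2b_p^2/2}.
\]
Multiplying over the finitely many primes in the support then gives \(\E Z_\infty(\lambda)\le1\), with no integrability issues.

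Next, write \(S=\sum_p\varepsilon_pb_p\) and \(V=\sum_pb_p^2\). On the event \(\{S\ge t,\ V\le v\}\), take \(\lambda=t/v>0\). There
\[
Z_\infty(\lambda)\ge\exp\bigl(\lambda t-\lambda^2v/2\bigr)=e^{t^2/(2v)}.
\]
Markov's inequality applied to \(Z_\infty(\lambda)\) therefore yields
\[
\PP(S\ge t,\ V\le v)\le e^{-t^2/(2v)}.
\]
The same argument with \(\lambda=-t/v\), or equivalently with \(b_p\) replaced by \(-b_p\), bounds \(\PP(S\le -t,\ V\le v)\). Adding the two bounds gives the factor \(2\) in \eqref{eq:exponential}.

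The argument has no real obstacle. The only points to check are the elementary inequality \(\cosh u\le e^{u^2/2}\), which follows by comparing Taylor coefficients since \((2k)!\ge 2^kk!\), and the fact that \(V\) is the predictable quadratic variation. This is what allows the deterministic constraint \(V\le v\) to be used inside the exponent, even though \(V\) is random. No conditional symmetry beyond the Rademacher distribution of \(\varepsilon_p\) is needed, because \(\varepsilon_p^2=1\) makes the predictable and total quadratic variations coincide, as noted before the statement.
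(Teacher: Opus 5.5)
Your proof is correct and is essentially the argument behind the paper's treatment, which only cites the Bercu--Touati inequality for conditionally symmetric martingales (with a short proof in \cite[Lemma~2.4]{BIRestricted}). That proof uses the same exponential supermartingale $\exp(\lambda S-\lambda^2V/2)$ built from $\cosh u\le e^{u^2/2}$, followed by Markov's inequality at $\lambda=\pm t/v$; the total and predictable quadratic variations coincide here because $\varepsilon_p^2=1$, exactly as you note.
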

\begin{proof}[Proof of Proposition~\ref{prop:gridupper}]
We first estimate the probability that \eqref{eq:gridupper} fails for
a fixed   {\(z\in\cC_Y\)}, where \(Y\) is sufficiently large.
We use a geometric grid for the variance
\(Q(z)\). Define
\[
F(z)=\frac{z}{(\log z)^2},\qquad U(z)=z(\log z)^2.
\]
If \(Y\) is large enough, Lemma~\ref{lem:basicclock} entails \(Q(z)>F(z)\).
On the other hand, Lemma~\ref{lem:basicclock} and Markov's inequality give
\[
\PP(Q(z)>U(z))\le\frac{\E Q(z)}{U(z)}\le(\log z)^{-2}.
\]
On the interval \([F(z),U(z)]\) put

\[
v_{z,j}=F(z)(1+\eta/2)^j,\quad 0\le j\le J_z,\qquad
J_z=\biggl\lceil\frac{\log(U(z)/F(z))}{\log(1+\eta/2)}\biggr\rceil
=O_\eta(\log_2z).
\]
Thus, Lemma~\ref{lem:exponential} with \(b_p=a_p(z)\) yields

\begin{align*}
&\PP\bigl(|M(z)-1|>(1+\eta)\sqrt{2Q(z)\log_2z}\bigr)\\
&\quad\le
\sum_{j=0}^{{J_z-1}}
\PP\bigl(|M(z)-1|>(1+\eta)\sqrt{2Q(z)\log_2z},\
v_{z,j}<Q(z)\le v_{z,j+1}\bigr)+(\log z)^{-2}\\
&\quad\le
\sum_{j=0}^{{J_z-1}}
\PP\bigl(|M(z)-1|>(1+\eta)\sqrt{2v_{z,j}\log_2z},\
Q(z)\le v_{z,j+1}\bigr)+(\log z)^{-2}\\
&\quad\le
2\sum_{j=0}^{{J_z-1}}
\exp\biggl\{-\frac{(1+\eta)^2\,2v_{z,j}\log_2z}
{2(1+\eta/2)v_{z,j}}\biggr\}
+(\log z)^{-2}\\
&\quad\le C(\log z)^{-(1+\eta)},
\end{align*}
because \((1+\eta)^2/(1+\eta/2)>1+\eta\) and
\(J_z=O_\eta(\log_2z)\).
Taking the union over   {\(\cC_Y\)} and using {\((\log Y)\le\log z\le2(\log Y)\)} for every {\(z\in\cC_Y\)},

\begin{align*}
&\PP\bigl(|M(z)-1|>(1+\eta)\sqrt{2Q(z)\log_2z}
\text{ for some }z\in\cC_Y\bigr)\\
&\qquad\le\sum_{z\in\cC_Y}
\PP\bigl(|M(z)-1|>(1+\eta)\sqrt{2Q(z)\log_2z}\bigr)
\le C|\cC_Y|(\log Y)^{-(1+\eta)}\le C_\eta (\log Y)^{-\eta/2}.
\end{align*}
For {\(Y=\exp(2^r)\)}  the last bound is \(C2^{-\eta r/2}\), which is summable
over \(r\). The Borel--Cantelli lemma proves that \eqref{eq:gridupper}
holds simultaneously at every point of   {\(\cC_{\exp(2^r)}\)}  for all sufficiently
large \(r\), almost surely.
\end{proof}

For interpolation from the grid   {\(\cC_Y\)}
to arbitrary points, we need two further estimates.
\begin{lemma}\label{lem:clocklower}
For every fixed \(b>0\), almost surely, for all sufficiently large
{\(Y\)} with dyadic {\(\log Y\)}, uniformly for every   {\(x\in[Y,Y^2]\)},
\begin{equation}\label{eq:clocklower}
Q(x)\ge x(\log Y)^{-b}.
\end{equation}
\end{lemma}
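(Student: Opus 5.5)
The first step is to reduce \eqref{eq:clocklower} to a lower bound for a single monotone random quantity. The deterministic bound of Lemma~\ref{lem:basicclock} gives only \(x/(3\log x)\), which is too weak, so I will use the large primes. For \(p>\sqrt x\), every \(m\le x/p\) satisfies \(m<p\). Hence \(a_p(x)=M(x/p)\) and
\[
Q(x)\ge\sum_{\sqrt x<p\le x}M(x/p)^2 .
\]
Since \(M\) is constant between integers, I group the primes by \(j=\lfloor x/p\rfloor\) and get
\(Q(x)\ge\sum_{j\le\sqrt x}M(j)^2\bigl(\pi(x/j)-\pi(x/(j+1))\bigr)\).
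For \(j\le x^{1/4}\), say, the prime number theorem \eqref{eq:pnt} gives \(\pi(x/j)-\pi(x/(j+1))\ge x/(2j(j+1)\log x)\). The error \(O(xe^{-c\sqrt{\log x}}/j)\) is harmless against \(x/(j^2\log x)\) in this range. Therefore
\[
Q(x)\ge\frac{x}{4\log Y}\,L\bigl(Y^{1/4}\bigr),\qquad L(V):=\sum_{j\le V}\frac{M(j)^2}{j(j+1)},
\]
uniformly for \(x\in[Y,Y^2]\), because \(\log x\le2\log Y\) and \(L\) is nondecreasing in \(V\). This monotonicity removes the uniformity in \(x\). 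By Borel--Cantelli along \(\log Y=2^r\), it then suffices to show
\[
\PP\bigl(L(V)<(\log V)^{1-b/2}\bigr)\ll(\log V)^{-\delta}
\]
for some \(\delta>0\), with \(V=Y^{1/4}\).

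The second step is a lower-tail estimate for \(L(V)\). Its mean is \(\asymp\log V\) by \eqref{eq:orth} and Lemma~\ref{lem:basicclock}. Second-moment methods are unsuitable here, because \(\E M(u)^4\) is too large relative to \((\E M(u)^2)^2\). Instead I will use many nearly independent blocks together with the martingale structure.

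Split \(j\in(1,V]\) into ranges \(I_k=(e^{k},e^{k+1}]\), for \(k\) from about \(\tfrac12\log V\) to \(\log V\), and let \(X_k=\sum_{j\in I_k}M(j)^2/(j(j+1))\). Write \(M(j)=\Psi(j,y_k)+\sum_{p>y_k}\varepsilon_pa_p(j)\), with \(y_k=e^{k/2}\) (any cut \(y_k\) with \(y_k\le e^{k/2}\) works). Conditionally on \(\FF_{y_k}\), the expectation of the second part squared is at least the number of squarefree \(n\le j\) with \(P(n)>y_k\). By Lemma~\ref{lem:rankin}, or simply by counting the \(n=pm\) with \(p>\sqrt j\), this count is \(\gg j\). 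Using orthogonality conditionally, this gives
\[
\E[X_k\mid\FF_{y_k}]\ge c_0>0 .
\]
The plan is then to apply an exponential lower-tail inequality for sums of nonnegative variables with conditional means bounded below. Such an inequality is
\[
\PP\Bigl(\textstyle\sum_kX_k\le\tfrac12\sum_k\mu_k\Bigr)\le\exp\Bigl(-c\sum_k\mu_k^2/\sum_k\E X_k^2\Bigr),
\]
or the truncated version with \(X_k\wedge T\). It would be applied along a sparse subsequence of \(k\), chosen so that \(\FF_{y_{k'}}\supseteq\sigma(X_k)\) for earlier indices \(k\) in the subsequence. A geometric sequence \(k_i\) with \(k_{i+1}\ge2k_i+2\) ensures this, but only leaves \(\asymp\log_2V\) blocks.

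The main obstacle is this step. With only \(\asymp\log_2V\) usable independent-ish blocks, the lower bound obtained is \(\gg\log_2V\), far below \((\log V)^{1-b/2}\). I therefore expect to need the following finer argument instead. Take all \(\asymp\log V\) unit blocks, and use a different revealing filtration for each block: \(X_k\) is determined mainly by the primes near \(e^{k}\) through the terms \(p>\sqrt j\). Truncate \(X_k\) at a level \(T=(\log V)^{b/4}\), which costs little because \(\E X_k\ll1\). Then apply a Freedman-type lower-tail bound to the martingale differences \(X_k\wedge T-\E[X_k\wedge T\mid\mathcal G_{k-1}]\), with a filtration \(\mathcal G_k\) ordered by decreasing prime size. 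If this succeeds, it gives a failure probability \(\exp(-c(\log V)^{1-b/2}/T)\), which is summable over \(r\). If the conditional-mean lower bound is not available in that ordering, my fallback is to lower-bound \(L(V)\) using Harper's low-moment estimate \(\E|M(u)|\gg\sqrt u(\log_2u)^{-1/4}\). This would be combined with Paley--Zygmund applied to \(\sum_k|M|\)-averages instead of squares, accepting the weaker but sufficient loss \((\log_2V)^{-1/2}\ll(\log V)^{-b/2}\).
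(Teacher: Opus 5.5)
Your first step ($a_p(x)=M(x/p)$ for $p>\sqrt x$, hence $Q(x)\ge\sum_{\sqrt x<p\le x}M(x/p)^2$) is exactly the paper's reduction. The paper then simply invokes \cite[Lemma~2.5]{BIRestricted} with $T=\log Y$. Your grouping by $j=\lfloor x/p\rfloor$ and the passage to the single monotone quantity $L(Y^{1/4})$ is a reasonable way to get uniformity in $x$. However, \eqref{eq:pnt} does not justify it. The error $O(xe^{-c\sqrt{\log x}}/j)$ is dominated by $x/(j^2\log x)$ only for $j\le e^{c\sqrt{\log x}}/\log x$. Truncating $L$ there leaves a quantity of size $O_{\PP}(\sqrt{\log Y})$, which is useless for $b<1/2$. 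For $j\le x^{1/4}$ the intervals $(x/(j+1),x/j]$ have length at least $\tfrac12(x/j)^{2/3}$, so you need a short-interval prime theorem (Ingham or Huxley).

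The genuine gap is your second step, which is the whole content of the lemma and is not carried out. First, the bound $\E[X_k\mid\FF_{y_k}]\ge c_0$ rests on an error. Conditionally on $\FF_{y_k}$, the second moment of $\sum_{n\le j,\,P(n)>y_k}f(n)$ is $\sum_a\Psi(j/a,y_k)^2$ over squarefree $y_k$-rough $1<a\le j$, as in \eqref{eq:roughfactor}. The count you quote is only its unconditional mean. Deterministically you get only the terms with $a>j/2$, i.e.\ $\gg j/\log j$. This gives $\E[X_k\mid\FF_{y_k}]\gg 1/k$, which sums to $\log_2V$, no better than Lemma~\ref{lem:basicclock}. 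The terms with $a=p$ prime essentially reproduce $\sum_{\sqrt j<p\le j}M(j/p)^2$, the very quantity you are trying to bound.

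Second, the obstruction is structural rather than technical. All the $X_k$ are driven by the same small-prime signs through $M(u)$ for small $u$ (the Euler product $F_y(t)$), not by block-local noise. Indeed $L(V)=\int_1^{\lfloor V\rfloor+1}M(u)^2u^{-2}\dd u\le\mathcal E_{V+1}$, since $M(u)=\Psi(u,V+1)$ there. So Lemma~\ref{lem:energyupper} gives $L(V)\le(\log V)(\log_2V)^{-1/2+a}$ with probability $1-o(1)$. Any scheme producing conditional means $\ge c_0$ on $\asymp\log V$ blocks, followed by a Freedman-type lower tail, would therefore prove the false statement $L(V)\gg\log V$ with high probability.

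Third, the Paley--Zygmund fallback only bounds $\PP(\text{large})$ from below, by something like $(\log_2u)^{-1/2}$ at a single $u$. It cannot make $\PP(L(V)\ \text{small})$ summably small without the independence you lack.

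What is missing is a lower-tail estimate for the random Euler-product (critical chaos) integral $\int_1^VM(u)^2u^{-2}\dd u$ at level $(\log V)^{1-b/2}$, with failure probability summable along $\log Y=2^r$. This is what \cite[Lemma~2.5]{BIRestricted} supplies. Alternatively, take Harper's Proposition~1 as quoted in the proof of Proposition~\ref{prop:goodlayers}, with $W=(\log_2A)^{1/100}$. It gives the level $e^{-2.2W}/\sqrt{\log_2A}\ge(\log A)^{-b/2}$ with failure probability $e^{-0.1W}$, which is summable along $\log Y=2^r$. Combined with your monotone reduction (repaired by a short-interval prime theorem), that would close the argument.
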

\begin{proof}
For \(p>\sqrt{x}\), we have \(a_p(x)=M(x/p)\). Hence
\[
Q(x)\ge Q_{>}(x)
:=\sum_{\sqrt{x}<p\le x}M(x/p)^2.
\]
The assertion follows from
\cite[Lemma~2.5]{BIRestricted}, with \(T=\log Y\).
\end{proof}

The following proposition provides the uniform interpolation estimate.
Its proof is given in Appendix~\ref{sec:interpolation}.
\begin{proposition}\label{prop:interpolation}
Fix \(0<\alpha<1/4\). Almost surely, for all sufficiently large   {\(Y\)} with dyadic {\(\log Y\)}, for every   {\(x\in[Y,Y^2]\)} there is {\(z=z_Y(x)\in\cC_Y\)},
with \(z\le x\), such that   {\(\log(x/z)\le2(\log Y)^{-\alpha}\)} and

\[
|M(x)-M(z)|+d(x,z)\le C\sqrt{x}\,(\log Y)^{-\alpha/12}.
\]

\end{proposition}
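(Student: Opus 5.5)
Given \(x\in[Y,Y^2]\), let \(z\in\cC_Y\) be the largest grid point with \(z\le x\), so \(\log(x/z)\le h=(\log Y)^{-\alpha}\). The increments then involve only integers in the short interval \((z,x]\):
\[
M(x)-M(z)=\sum_{z<n\le x}f(n),\qquad
a(x)-a(z)=\sum_{\substack{z<n\le x\\ n\ \mathrm{squarefree}}}f(n/P(n))e_{P(n)}.
\]
Hence
\[
d(x,z)^2=\sum_p\Big(\sum_{\substack{z/p<m\le x/p\\P(m)<p}}f(m)\Big)^2 .
\]
The interval \((z,x]\) has length at most \(2hx\). Its second moments are therefore small:
\[
\E(M(x)-M(z))^2\le 3hx,\qquad \E d(x,z)^2\le 3hx,
\]
using the orthogonality \eqref{eq:orth}. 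The difficulty is uniformity over the continuum of \(x\). I will handle it with a finer auxiliary grid and a chaining argument.

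\textbf{Step 1: fine grid.} Inside each coarse cell \([z,ze^h]\), introduce dyadic sub-grids \(G_j=\{ze^{kh2^{-j}}\}\) for \(0\le j\le J\). Stop at the level \(J\) where the step \(zh2^{-J}\) is about \(\sqrt{x}(\log Y)^{-1}\). Between consecutive points of \(G_J\), the interval contains at most \(x^{1/2}(\log Y)^{-1}+1\) integers, so the trivial bound \(|f|\le1\) controls both \(M\) and \(d\) there deterministically. For \(d\) this holds because each integer changes exactly one coordinate by \(\pm1\), so \(d\le\#\{\text{integers}\}\). The total number of fine points over the block is polynomial in \(Y\). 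That is too many for a crude union bound with polynomial tails, which is why chaining is needed.

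\textbf{Step 2: fourth moments.} For an interval \(I=(u,v]\) of length \(L\le v\), I claim
\[
\E\Big(\sum_{n\in I}f(n)\Big)^4\ll L^2(\log v)^{C}
\]
for a fixed \(C\), and the analogous bound for \(\E d(u,v)^4\).

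\emph{The sum.} Expanding the fourth power gives a count of quadruples \(n_1n_2n_3n_4=\square\) in \(I\). Writing \(n_i\) through gcd's and using a divisor-type bound gives \(\ll L^2\,\tau\)-losses. These losses can be bounded by powers of \(\log v\) on average, since the quadruple counting is a standard divisor-sum estimate.

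\emph{The vector.} Write \(d^2=\sum_p A_p^2\). Then
\[
\E d^4=\sum_{p,q}\E A_p^2A_q^2\le\Big(\sum_p(\E A_p^4)^{1/2}\Big)^2 .
\]
Each \(A_p\) is a short-interval sum of \(f\) over \(m\) with \(P(m)<p\), so the same counting applies to it.

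\emph{The maximal estimate.} Combined with the chaining over \(G_0\subset\cdots\subset G_J\), this gives
\[
\E\max_{x\in\text{cell}}\big(|M(x)-M(z)|^4+d(x,z)^4\big)\ll x^2h^2(\log Y)^{C'} .
\]
Summing the fourth moments \((x2^{-j}h)^2\) over the \(2^j\) points of level \(j\), with weights, gives a geometric series dominated by the top level. This can be made rigorous with a Móricz-type maximal inequality for superadditive fourth-moment bounds of the form \(g(I)^2\) with \(g\) additive.

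\textbf{Step 3: probabilities.} Markov's inequality gives
\[
\PP\Big(\max\ge\sqrt{x}(\log Y)^{-\alpha/12}\Big)\ll h^2(\log Y)^{C'+\alpha/3}=(\log Y)^{-2\alpha+\alpha/3+C'} .
\]
There are \(|\cC_Y|\ll(\log Y)^{1+\alpha}\) cells, and summing over \(\log Y=2^r\) needs a power saving.

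\textbf{Main obstacle.} The logarithmic loss \(C'\) in the fourth moment is the main obstacle. A crude divisor bound would destroy the saving, so the estimate of Step 2 must be nearly sharp, with loss at most a small power of \(\log Y\). The \(\alpha/12\) exponent and the restriction \(\alpha<1/4\) suggest the intended bound has this shape.

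\emph{Boosting the saving.} If the raw count does not give enough, I would exploit the freedom that the grid spacing for interpolation may be chosen as \(h'=(\log Y)^{-\beta}\) with \(\beta\) large inside the appendix. The statement only needs \(\log(x/z)\le2(\log Y)^{-\alpha}\), and the bound is governed by \(h\). First I would try a direct count:
\[
\E\Big(\sum_{n\in I}f(n)\Big)^4\le 3\Big(\sum_{n\in I}\mu^2(n)\Big)^2+\#\{\text{nondiagonal square products}\}.
\]
I would bound the nondiagonal count by \(L^{2}\) times \((\log v)^{O(1)}\), or better by \(L^{3/2+o(1)}+L^2\). The bound \(L^{3/2+o(1)}\) suffices when \(L\ge x^{1/2}\). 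The final summation over \(r\) then closes by Borel–Cantelli.
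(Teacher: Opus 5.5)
\textbf{The gap is in Step 3, and it is not the logarithmic loss \(C'\).} Even a loss-free, Gaussian-sharp fourth moment cannot close your union bound. Suppose \(\E(M(x)-M(z))^4\le3(\E(M(x)-M(z))^2)^2\approx3h^2x^2\). Markov's inequality at the level \(\sqrt x(\log Y)^{-\alpha/12}\) then gives a per-cell failure probability of order \((\log Y)^{-5\alpha/3}\). Multiplying by the \(\asymp(\log Y)^{1+\alpha}\) cells of \(\cC_Y\) gives
\[
(\log Y)^{1+\alpha}\cdot(\log Y)^{-5\alpha/3}=(\log Y)^{1-2\alpha/3}\longrightarrow\infty
\qquad(\alpha<1/4).
\]
So no refinement of the quadruple count in Step 2 can help. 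Higher fixed moments obtained from hypercontractivity do not help either: their logarithmic losses grow quadratically in the moment order, while the gain grows only linearly.

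Your ``boosting'' idea also fails. The point \(z\) must lie in \(\cC_Y\), whose spacing \(h=(\log Y)^{-\alpha}\) is fixed. A finer grid only shifts the difficulty to comparing a fine point with the coarse point at logarithmic distance \(\asymp h\), and that is exactly where polynomial tails break down. Fourth-moment chaining can work only at scales where the cell length is a large negative power of \(\log Y\).

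\textbf{What is missing is a sub-Gaussian tail at the coarse scale.} The paper writes \(M(y)-M(c)=\sum_p\varepsilon_p(a_p(y)-a_p(c))\), whose coefficients are \(\FF_{p-}\)-measurable. Lemma~\ref{lem:exponential} then bounds the probability that \(|M(y)-M(c)|>\sqrt c(\log Y)^{-\alpha/12}\) while \(d(c,y)^2\le c(\log Y)^{-\alpha/3}\) by \(2e^{-(\log Y)^{\alpha/6}/2}\). This bound survives a union over polylogarithmically many pairs.

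That step requires control of the random quadratic variation \(d(c,y)^2\) simultaneously for all pairs. Markov's inequality with \(\E d(c,y)^2\approx hc\) is again too weak. The paper's device is Lemma~\ref{lem:smoothingupper}: averaging the prime index over \([p,(1+\delta)p]\) bounds \(d(x,y)^2/x\) by small-prime and smoothing errors of tiny expectation, plus \((\log_2Y)^2A_N(\theta)/\log Y\). The quantity \(A_N(\theta)\) does not depend on \(x\), and \(\sup_{\theta\le h}A_N(\theta)\) is controlled by a single Markov bound per block (Lemma~\ref{lem:anbound}).

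The fourth-moment maximal inequality (Lemma~\ref{lem:fourth}) enters only on the fine grid \(\cD_Y\), with logarithmic spacing \((\log Y)^{-120}\). There the per-cell probability \((\log Y)^{-128}\) beats the \((\log Y)^{121}\) cells.
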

\begin{proof}[Proof of the upper bound in Theorem~\ref{thm:main}]
For   {\(x\in[Y,Y^2]\)} and {\(z=z_Y(x)\)}  as in
Proposition~\ref{prop:interpolation}, almost surely and uniformly
throughout the block as   {\(Y\to\infty\)},
\begin{equation}\label{eq:interpolationratios}
\frac{Q(z)}{Q(x)}=1+o(1),\qquad
\frac{|M(x)-M(z)|}{\sqrt{Q(x)\log_2x}}=o(1).
\end{equation}
Indeed, applying Lemma~\ref{lem:clocklower} with \(b=\alpha/12\) and
Proposition~\ref{prop:interpolation},

\[
\frac{|\sqrt{Q(z)}-\sqrt{Q(x)}|}{\sqrt{Q(x)}}
\le\frac{d(x,z)}{\sqrt{Q(x)}}
\le\frac{C\sqrt{x}(\log Y)^{-\alpha/12}}{\sqrt{x}(\log Y)^{-\alpha/24}}
=C(\log Y)^{-\alpha/24}\longrightarrow0.
\]
The second ratio in \eqref{eq:interpolationratios} is at most
\(C(\log Y)^{-\alpha/24}/\sqrt{\log_2Y}\longrightarrow0\).

Finally, fix \(0<\eta<1/2\). Combining
Proposition~\ref{prop:gridupper} with \eqref{eq:interpolationratios}
we obtain
\begin{align*}
\frac{|M(x)|}{\sqrt{2Q(x)\log_2x}}
&\le\frac1{\sqrt{2Q(x)\log_2x}}
+(1+\eta)\sqrt{\frac{Q(z)}{Q(x)}}\sqrt{\frac{\log_2z}{\log_2x}}
+\frac{|M(x)-M(z)|}{\sqrt{2Q(x)\log_2x}}\\
&\le(1+\eta)(1+o(1)).
\end{align*}
Letting \(\eta\) tend to \(0\) completes the proof of the upper bound.
\end{proof}



\section{Proof of the lower bounds}
\label{sec:loweroutline}

\begin{proof}[Proof of Theorem~\ref{thm:main}: the lower bounds.]

We seek positive and negative fluctuations of \(M(x)\) of order
\(\sqrt{2Q(x)\log_2x}\). We shall show that, on a suitable
deterministic grid, the normalized sum reaches both positive
and negative levels arbitrarily close to \(1\) and \(-1\),
respectively. We use the endpoint-selection method of
\cite[Sections~4, 6 and~7]{BIRestricted}, now applied successively
to thin prime layers. The auxiliary results are stated and
proved in Appendix~\ref{app:lower}, so that the main steps
of the proof can be read together.
For \(Y\to\infty\), consider the grid

\begin{equation}\label{eq:blockparameters}
\mathcal X_Y=
\biggl\{
Ye^j:
j=0,\ldots,\biggl\lfloor\frac{\log Y}{(\log_2Y)^{100}}\biggr\rfloor
\biggr\}.
\end{equation}

For every fixed \(c\in(0,1)\), we shall show that, with probability
tending to one, there exist \(x^+,x^-\in\mathcal X_Y\) such that
\begin{equation}\label{eq:blockexceed}
\frac{M(x^+)}{\sqrt{2Q(x^+)\log_2x^+}}\ge c,
\qquad
\frac{M(x^-)}{\sqrt{2Q(x^-)\log_2x^-}}\le-c
\end{equation}
From this, the almost sure lower bounds are derived in Step~9.

Fix \(c\in(0,1)\). Choose \(0<\gamma<1/20\) so small that
\[
\frac{1-4\gamma}{\sqrt{1+\gamma}}>c.
\]
Throughout the proof, \(\gamma\) is fixed and \(Y\to\infty\).

\medskip\noindent
{\sc Step 1.}
All the points of the grid $\mathcal X_Y$, defined in \eqref{eq:blockparameters}, belong to
\(
\bigl[Y,Y^{1+(\log_2Y)^{-100}}\bigr].
\)
Consequently,
\begin{equation}\label{eq:blockproof-grid}
\begin{aligned}
|\mathcal X_Y|&\le\log Y,\\
\log|\mathcal X_Y|
&=\log_2Y-100\log_3Y+O(1),\\
\log_2x
&=\log_2Y+O((\log_2Y)^{-100}),
\qquad x\in\mathcal X_Y.
\end{aligned}
\end{equation}
The last estimate is uniform over the grid.

We group the terms in the largest-prime decomposition of
\(M(x)\) into layers according to their largest prime factors.
Put \(\delta=(\log_2Y)^{-1/20}\) and define the layer boundaries by
\[
y_j=\exp\bigg(
\frac{e^{j\delta}\log Y}{(\log_2Y)^2}
\bigg),
\qquad j\ge0.
\]
Let \(K\) be the least positive integer such that
\(
y_K\ge Y^{1+(\log_2Y)^{-100}}
\), so that every grid point is at most \(y_K\). In particular,
\begin{equation}\label{eq:layercount}
K=O\bigl((\log_2Y)^{1/20}\log_3Y\bigr).
\end{equation} Define the layer
\[
I_j=(y_{j-1},y_j],
\qquad 1\le j\le K.
\]

We shall construct a decreasing sequence of subsets of the grid $\mathcal X_Y$. The sets are nested, so every endpoint remaining at the end
satisfies the conditions imposed in all preceding selections. Before exposing layer \(j\), the current set will
be determined by the signs in the preceding layers. After
revealing the signs in \(I_j\), we retain only those endpoints
at which the layer sum exceeds a prescribed threshold with
the required sign. To carry out each selection conditionally on the preceding
signs, both the current candidate set and the threshold must
be known before the layer is exposed.
We also need the layer
sum to be a linear combination of its independent signs with
coefficients known at that time.

This motivates the following
decomposition.
Recall that
\(
a_p(x)=\Psi(x/p,p^-)
\)
is \(\FF_{p-}\)-measurable. This gives a martingale-difference
representation, but the coefficients within a layer need not
be determined before that layer is exposed. Indeed, if
\(q<p\) belong to \(I_j\) and \(pq\le x\), then \(a_p(x)\)
contains the term \(\varepsilon_q\). Thus the contribution
\(\varepsilon_pa_p(x)\) contains a product of two signs from
the same layer.

To obtain a linear combination of the layer signs with
coefficients determined before the layer is exposed, we put
\[
b_p(x)=\Psi(x/p,y_{j-1}),
\qquad
d_p(x)=a_p(x)-b_p(x),
\qquad p\in I_j.
\]
Then
\[
d_p(x)=
\sum_{\substack{m\le x/p\\y_{j-1}<P(m)<p}}f(m).
\]
All the coefficients \(b_p(x)\), \(p\in I_j\), are
\(\FF_{y_{j-1}}\)-measurable. Consequently, conditionally on
\(\FF_{y_{j-1}}\), the retained sum
\[
R_j(x)=\sum_{p\in I_j}\varepsilon_pb_p(x),
\qquad 1\le j \le K,
\]
is a linear combination of independent Rademacher signs
with fixed coefficients. This is the structure required
by the counting estimate used below.

We treat \(d_p(x)\) as an error. Every nonzero term discarded
in this way corresponds to an integer whose two largest
prime factors belong to the same layer. The thinness of
the layers allows us to control the total quadratic
variation of these errors.

For \(1\le j\le K\), put
\[
\begin{aligned}
C_j(x,z)&=\sum_{p\in I_j}b_p(x)b_p(z),\\
V_j(x)&=C_j(x,x)=\sum_{p\in I_j}b_p(x)^2.
\end{aligned}
\]
Conditionally on \(\FF_{y_{j-1}}\), all coefficients in
\(R_j(x)\) are fixed and the signs in \(I_j\) remain
independent Rademacher variables. Therefore,
\[
\E[R_j(x)\mid\FF_{y_{j-1}}]=0,
\qquad
\E[R_j(x)R_j(z)\mid\FF_{y_{j-1}}]=C_j(x,z).
\]
In particular,
\[
\Var(R_j(x)\mid\FF_{y_{j-1}})=V_j(x).
\]

Since \(y_K\ge x\) for every \(x\in\mathcal X_Y\), the following
decomposition is exact:
\begin{equation}\label{eq:blockproof-decomposition}
M(x)
=
\Psi(x,y_0)
+\sum_{j=1}^K R_j(x)
+\sum_{y_0<p\le y_K}\varepsilon_pd_p(x),
\qquad x\in\mathcal X_Y.
\end{equation}

Finally, define
\begin{equation}
\label{eq:q}
v_j=\max_{x\in\mathcal X_Y}\frac{V_j(x)}x,
\qquad
\mathcal J=
\bigl\{j:v_j\ge(\log_2Y)^{-59/100}\bigr\},
\qquad
q=\sum_{j\in\mathcal J}v_j,
\end{equation}
and
\[
S_*(x)=\sum_{j\in\mathcal J}R_j(x).
\]
A layer is called active precisely when \(j\in\mathcal J\).
The value \(v_j\), and hence the decision whether layer \(j\)
is active, is \(\FF_{y_{j-1}}\)-measurable. Thus this decision
is made before the signs in that layer are revealed.
The final value of \(q\), however, need not be known at that
time.

\medskip\noindent
{\sc Step 2.}
With the notation introduced above,
\begin{equation}\label{eq:blockproof-remainder}
M(x)-1-S_*(x)
=
\bigl(\Psi(x,y_0)-1\bigr)
+\sum_{j\notin\mathcal J}R_j(x)
+\sum_{y_0<p\le y_K}\varepsilon_pd_p(x).
\end{equation}
The three terms on the right-hand side above are
predictable Rademacher sums.
Define
\[
B_0(x)=\sum_{p\le y_0}a_p(x)^2,
\qquad
D(x)=\sum_{y_0<p\le y_K}d_p(x)^2.
\]
Then  \(B_0(x)\), \(\sum_{j\notin\mathcal J}V_j(x)\) and $D(x)$ are predictable quadratic variations of the corresponding terms on the right side of \eqref{eq:blockproof-remainder}.
Indeed, for the second sum, this follows because both \(b_p(x)\)
and the decision \(j\notin\mathcal J\) are
\(\FF_{y_{j-1}}\)-measurable.

We first show that, with probability \(1-o(1)\),
\begin{equation}
\label{eq:small_variance}
\max_{x\in \mathcal X_Y} \frac 1x\bigg(
B_0(x) + \sum_{j\notin\mathcal J}V_j(x) + D(x)\bigg) \le C(\log_2 Y)^{-53/100}.
\end{equation}

To estimate the first term,
orthogonality \eqref{eq:orth} and Lemma~\ref{lem:rankin}, with \(A=1\), give
\[
\E B_0(x)=\sfcount(x,y_0)-1
\le C x\exp\bigl(-\tfrac12(\log_2Y)^2\bigr).
\]
Hence, by Markov's inequality, a union bound and
\(|\mathcal X_Y|\le\log Y\),
\[
\PP\bigg(
\max_{x\in\mathcal X_Y}\frac{B_0(x)}x
>(\log Y)^{-20}
\bigg)
\le C
(\log Y)^{21}
\exp\bigl(-\tfrac12(\log_2Y)^2\bigr).
\]
For the second one, by the definition of \(\mathcal J\),
\begin{equation}\label{eq:blockproof-inactive}
\sup_{x\in\mathcal X_Y}\frac1x
\sum_{j\notin\mathcal J}V_j(x)
\le K(\log_2Y)^{-59/100}
=o\bigl((\log_2Y)^{-53/100}\bigr),
\end{equation}
where we used
\(K = O((\log_2Y)^{1/20}\log_3Y)\).

Finally, by Proposition~\ref{prop:flatness}, with probability $1-o(1)$,
\[
\max_{x\in \mathcal X_Y}\frac{D(x)}{x} \le 2
(\log_2Y)^{-53/100}.
\]
This proves \eqref{eq:small_variance}.

For the retained layers, Proposition~\ref{prop:goodlayers}
gives, with probability \(1-o(1)\),
\begin{equation}\label{eq:blockproof-layerbounds}
c_0(\log_2Y)^{-51/100}\le q\le2\log_2Y,\qquad
(1-(\log_2Y)^{-1})v_j
\le\frac{V_j(x)}x\le v_j
\end{equation}
simultaneously for all \(j\in\mathcal J\) and
\(x\in\mathcal X_Y\).
Summing over \(j\in\mathcal J\), we obtain
\begin{equation}\label{eq:blockproof-retainedclock}
(1-(\log_2Y)^{-1})xq
\le\sum_{j\in\mathcal J}V_j(x)\le xq.
\end{equation}
Thus the retained sum has predictable quadratic variation
asymptotic to \(xq\), uniformly over the grid. By
\eqref{eq:small_variance} and the lower bound for \(q\),
the quadratic variation of the omitted layers is \(o(xq)\).

We compare the full quadratic variation with the
retained one. Write \(a(x)=(a_p(x))_p\), and define
\[
a_{*,p}(x)=
\begin{cases}
b_p(x),&p\in I_j\text{ for some }j\in\mathcal J,\\
0,&\text{otherwise}.
\end{cases}
\]
Thus \(\|a(x)\|_2^2=Q(x)\), whereas
\(\|a_*(x)\|_2^2\) is the retained quadratic variation
in \eqref{eq:blockproof-retainedclock}.
On an active layer the coefficient error is \(d_p(x)\);
on an omitted layer it is \(b_p(x)+d_p(x)\).
Consequently, with probability $1-o(1)$
\[
\|a(x)-a_*(x)\|_2^2
\le B_0(x)+2D(x)
+2\sum_{j\notin\mathcal J}V_j(x)
\le Cx(\log_2Y)^{-53/100}.
\]
The triangle inequality and
\eqref{eq:blockproof-retainedclock} now give
\[
\begin{split}
\bigg|\sqrt{\frac{Q(x)}x}-\sqrt q\bigg|
&\le
\frac{\|a(x)-a_*(x)\|_2}{\sqrt x}
+\bigg|\frac{\|a_*(x)\|_2}{\sqrt x}-\sqrt q\bigg|\\
&\le C(\log_2Y)^{-53/200}
+C\sqrt q\,(\log_2Y)^{-1}.
\end{split}
\]
By the lower bound for \(q\) in
\eqref{eq:blockproof-layerbounds},
dividing by \(\sqrt q\) and then squaring, we obtain
\begin{equation}\label{eq:blockproof-fullclock}
\frac{Q(x)}{xq}
=1+O\bigl((\log_2Y)^{-1/100}\bigr),
\qquad x\in\mathcal X_Y.
\end{equation}
On the events under consideration, the error is bounded
uniformly by a deterministic function tending to zero.

It remains to control the omitted sums themselves.
Apply Lemma~\ref{lem:exponential} separately to the three
sums in \eqref{eq:blockproof-remainder}, with
\(
t=\tfrac13\sqrt{x}(\log_2Y)^{24/100},
v=Cx(\log_2Y)^{-53/100}.
\)
For each fixed endpoint, the probability that one of these
sums exceeds \(t\) in absolute value while its quadratic
variation is at most \(v\) is bounded by
\(
2\exp\bigl(-c_1(\log_2Y)^{101/100}\bigr)
\). Denote by $\mathcal G_Y$ the set on which all the above estimates hold.
A union bound over the three sums and all endpoints gives
\[
\begin{split}
&\PP\bigg(
\mathcal G_Y\cap
\bigg\{
\max_{x\in\mathcal X_Y}
\frac{|M(x)-1-S_*(x)|}{\sqrt x}
>(\log_2Y)^{24/100}
\bigg\}
\bigg)\\
&\qquad\le
6(\log Y)
\exp\bigl(-c_1(\log_2Y)^{101/100}\bigr)
=o(1).
\end{split}
\]
Since \(\PP(\mathcal G_Y)\to1\), with probability \(1-o(1)\),
\begin{equation}\label{eq:blockproof-smallremainder}
\max_{x\in\mathcal X_Y}
\frac{|M(x)-1-S_*(x)|}{\sqrt x}
\le(\log_2Y)^{24/100}.
\end{equation}

Together, \eqref{eq:blockproof-fullclock} and
\eqref{eq:blockproof-smallremainder} reduce the problem to
finding positive and negative values of \(S_*(x)\) close to
\(\sqrt{2xq\log_2Y}\). Thus the omitted terms are negligible at the required scale.

\medskip\noindent
{\sc Step 3. }. For each layer \(j\), join distinct endpoints \(x,z\in\mathcal X_Y\)
when
\begin{equation}
\label{eq:cov}
|C_j(x,z)|>2(\log_2Y)^{-3/5}\sqrt{xz}.
\end{equation}
We claim that, with probability \(1-o(1)\), all these graphs
satisfy
\begin{equation}\label{eq:layerdegree}
\max_{1\le j\le K}\max_{x\in\mathcal X_Y}
\#\{z\in\mathcal X_Y\setminus\{x\}:\text{\((x,z)\) satisfies \eqref{eq:cov}}\}
\le(\log Y)^\gamma.
\end{equation}

To verify this, apply Proposition~\ref{prop:covariance} with
the prime cutoffs \(y_{j-1},y_j\), the points of the grid
\(\mathcal X_Y\), and \(\theta=\gamma/2\).
Indeed,
\[
C_j(x,z)
=
\sum_{y_{j-1}<p\le y_j}
\Psi(x/p,y_{j-1})\Psi(z/p,y_{j-1}).
\]
We check that the assumptions of that proposition hold
uniformly in \(j\).
The layer construction gives \(y_j\le y_{j-1}^2\). The grid starts at \(Y\), which is larger than
\((\log y_{j-1})^9\) for all sufficiently large \(Y\), and
\[
|\mathcal X_Y|
\ll\frac{\log Y}{(\log_2Y)^{100}}
\le
\frac{\log y_{j-1}}{(\log_2 y_{j-1})^{10}}.
\]
Thus Proposition~\ref{prop:covariance} applies to every layer.

For all sufficiently large \(Y\), uniformly in \(j\),
\[
(\log_2 y_{j-1})^{-3/5}
\le2(\log_2Y)^{-3/5}.
\]
Hence every edge of our graph is also an edge of the graph
controlled by that proposition. Its maximum degree is therefore
at most
\[
(\log y_{j-1})^{\gamma/2}
\le(2\log Y)^{\gamma/2}
\le(\log Y)^\gamma,
\]
outside the exceptional event in
Proposition~\ref{prop:covariance}.

Finally, a union bound over the \(K\) layers bounds the
probability of any failure by
\[
O_\gamma\bigg(
K\frac{(\log_3Y)^4}{(\log_2Y)^{1/10}}
\bigg)
=o(1).
\]

\medskip\noindent
{\sc Step 4.}
Before revealing the signs in \(I_j\), standardize the retained
sums by
\[
Z_{j,x}=\frac{R_j(x)}{\sqrt{V_j(x)}},\qquad x\in\mathcal X_Y.
\]
On the event in Proposition~\ref{prop:goodlayers}, these are
conditionally centered Rademacher sums of variance one. Their
coefficients satisfy \eqref{eq:smallcoeff}, that is
\begin{equation}
\label{eq:c6}
\max_{\substack{j\in\mathcal J,\ p\in I_j\\x\in\mathcal X_Y}}
\frac{|b_p(x)|}{\sqrt{V_j(x)}}
\le\beta_Y:=e^{-(\log Y)/(3(\log_2Y)^2)}.
\end{equation}

Fix an active layer \(j\in\mathcal J\) and distinct endpoints
\(x,z\in\mathcal X_Y\) not joined by an edge in Step~3.
Since \(V_j(x)\) and \(V_j(z)\) are
\(\FF_{y_{j-1}}\)-measurable,
\[
\E[Z_{j,x}Z_{j,z}\mid\FF_{y_{j-1}}]
=
\frac{C_j(x,z)}{\sqrt{V_j(x)V_j(z)}}.
\]
The absence of an edge means that
\(
|C_j(x,z)|
\le2(\log_2Y)^{-3/5}\sqrt{xz}.
\). By \eqref{eq:blockproof-layerbounds} and the definition
of \(\mathcal J\), we obtain
\[
\big|\E[Z_{j,x}Z_{j,z}\mid\FF_{y_{j-1}}]\big|
\le
\frac{2(\log_2Y)^{-3/5}}
{\bigl(1-(\log_2Y)^{-1}\bigr)v_j}
\le4(\log_2Y)^{-3/5+59/100}
=4(\log_2Y)^{-1/100}.
\]
Together with the degree bound, these are the hypotheses of
Lemma~\ref{lem:count}.
We first state the counting estimate for an arbitrary threshold;
the thresholds used in the construction will be chosen in Step~5.

Fix an active layer \(j\). Let \(S\subset\mathcal X_Y\) and
\(u\ge0\) be determined by \(\FF_{y_{j-1}}\). Here \(S\) is the set of endpoints
retained from the preceding layers, and \(u\) is the threshold
for the normalized sum \(R_j(x)/\sqrt{V_j(x)}\).

Put
\[
u_0=C_{\gamma,10}\sqrt{\log_3Y}.
\]
If
\[
u\ge u_0,\qquad
|S|e^{-(1+\gamma)u^2/2}\ge(\log Y)^{3\gamma},
\]
then, for either fixed sign \(s\in\{-1,1\}\), at least
\[
\frac{|S|}{8}e^{-(1+\gamma)u^2/2}
\]
points of \(S\) satisfy \(sR_j(x)\ge u\sqrt{V_j(x)}\) with probability
\(1-O_\gamma((\log_2Y)^{-10})\).

\medskip\noindent
{\sc Step 5.}

We choose the thresholds by comparing two quantities:
the accumulated contribution to \(S_*(x)\) and the number
of endpoints remaining after each selection.
At every active layer, we retain only those endpoints at
which the layer sum exceeds the chosen threshold.
A higher threshold gives a larger guaranteed contribution
to \(S_*(x)\), but leaves fewer endpoints available for the
subsequent layers. The thresholds must therefore be large
enough to produce the required fluctuation and small enough
for the selection to continue through all active layers,
leaving a nonempty final set.

The sets are nested. Hence every endpoint in the final set
satisfies all the preceding threshold conditions, and the
corresponding contributions add up at that same endpoint.
We seek a surviving point \(x\in\mathcal X_Y\) at which
\(S_*(x)/\sqrt x\) is close to \(\sqrt{2q\log_2Y}\).
The same construction, with the opposite sign, gives the
negative fluctuation.

We first examine how an unspecified threshold \(u_j\) affects
these two quantities. We shall then use the resulting
estimates to choose the thresholds. Suppose for the moment
that, in layer \(j\), we retain endpoints satisfying
\[
R_j(x)\ge u_j\sqrt{V_j(x)},
\]
where \(u_j\) is to be chosen.
By \eqref{eq:blockproof-layerbounds}, each retained endpoint
then contributes at least
\[
\frac{R_j(x)}{\sqrt x}
\ge u_j\sqrt{(1-(\log_2Y)^{-1})v_j}.
\]
We introduce a parameter $\lambda>0$, to be chosen later and define
\[
u_j = \max\bigg\{
\lambda\sqrt{\frac{v_j}{1-(\log_2Y)^{-1}}}, u_0 \bigg\}.
\]
If an endpoint survives every active layer, its total
contribution then satisfies
\[
\frac{S_*(x)}{\sqrt x}
\ge\lambda q.
\]
Thus a value of \(\lambda\) close to
\(\sqrt{2\log_2Y/q}\) would give a value of \(S_*(x)\)
close to \(\sqrt{2xq\log_2Y}\), as required.

At the same time, the thresholds must allow the selection
to continue. By Step 4, a selection with threshold \(u_j\)
retains at least
\(
({|S|}/{8})e^{-(1+\gamma)u_j^2/2}
\). Taking logarithms, this means
\[
\log|S_{\rm new}|
\ge\log|S|-\log8-\frac{1+\gamma}{2}u_j^2.
\]
Thus, if all active-layer selections are performed and their
counting bounds hold, the final set satisfies
\[
\log|S_{\rm final}|
\ge
\log|\mathcal X_Y|
-|\mathcal J|\log8
-\sum_{j\in\mathcal J}\frac{1+\gamma}{2}u_j^2.
\]
If we temporarily omit the minimum threshold \(u_0\),
the sum of the exponential losses over all active layers is
\[
\sum_{j\in\mathcal J}\frac{1+\gamma}{2}u_j^2
=
\frac{(1+\gamma)\lambda^2q}
{2(1-(\log_2Y)^{-1})}.
\]
Since \(\log|\mathcal X_Y|\sim\log_2Y\) and
\(K=o(\log_2Y)\), a fixed fractional margin in the main
exponential cost can absorb the losses \(K\log8\) and
\(Ku_0^2\). Step~7 verifies this explicitly.

One difficulty remains in choosing the thresholds: the value
of \(q\) is random and depends on the coefficients in all
active layers. It is therefore not known when the first
thresholds must be specified.
To overcome this, we replace \(q\) by a deterministic parameter
\(w\) and carry out the construction for every \(w\) in a
finite deterministic set. After all layers have been exposed,
we choose a value of \(w\) sufficiently close to the realized
value of \(q\).

Let
\begin{equation}\label{eq:guesses}
\mathcal W_Y=
\bigg\{(\log_2Y)^{-1}(1+\gamma)^a:
a=0,\ldots,
\bigg\lceil\frac{2\log_3Y+\log2}{\log(1+\gamma)}\bigg\rceil
\bigg\}.
\end{equation}
This set has \(O_\gamma(\log_3Y)\) elements. Its smallest
element is \((\log_2Y)^{-1}\), and its largest element is
at least \(2\log_2Y\). On the event in
\eqref{eq:blockproof-layerbounds}, for sufficiently large \(Y\),
\[
(\log_2Y)^{-1}\le q\le2\log_2Y.
\]
Choose the smallest element \(w\in\mathcal W_Y\) with \(w\ge q\).
Since consecutive elements differ by a factor \(1+\gamma\),
this choice satisfies
\begin{equation}\label{eq:correctguess}
q\le w\le(1+\gamma)q.
\end{equation}
We use this choice only after all constructions have been
performed.

For each deterministic \(w\in\mathcal W_Y\), put
\begin{equation}\label{eq:thresholds}
\lambda_w=(1-4\gamma)
\sqrt{\frac{2\log_2Y}{w}},
\qquad
u_j(w)=
\max\bigg\{
u_0,\,
\lambda_w\sqrt{\frac{v_j}{1-(\log_2Y)^{-1}}}
\bigg\}.
\end{equation}

\medskip\noindent
{\sc Step 6.}
The parameters and the rule for choosing the thresholds have
now been specified. We next define the successive sets of
retained endpoints and verify that the required conditions
hold throughout the construction. In particular, we must
check that the counting estimate applies at every active
layer and that the final sets are nonempty.

For each \(s\in\{-1,1\}\) and each \(w\in\mathcal W_Y\),
start with \(S_0^{s,w}=\mathcal X_Y\).
If \(j\notin\mathcal J\), put \(S_j^{s,w}=S_{j-1}^{s,w}\).
If \(j\in\mathcal J\), check the local variance comparison, that is, the second part of
\eqref{eq:blockproof-layerbounds}, the coefficient bound \eqref{eq:c6},
and the degree bound \eqref{eq:layerdegree} for this layer.
All three checks are \(\FF_{y_{j-1}}\)-measurable.
They do not involve the final value of \(q\).
Stop this construction if any check fails.

Otherwise test
\begin{equation}\label{eq:branchbudget}
|S_{j-1}^{s,w}|e^{-(1+\gamma)u_j(w)^2/2}
\ge(\log Y)^{3\gamma}.
\end{equation}
If this fails, stop this construction. If it holds, expose the
signs in \(I_j\) and retain
\[
S_j^{s,w}=
\bigg\{x\in S_{j-1}^{s,w}:
sR_j(x)\ge u_j(w)\sqrt{V_j(x)}\bigg\}.
\]
Step 4 gives
\begin{equation}\label{eq:transition}
|S_j^{s,w}|\ge\frac{|S_{j-1}^{s,w}|}{8}
e^{-(1+\gamma)u_j(w)^2/2}
\end{equation}
except with conditional probability
\(O_\gamma((\log_2Y)^{-10})\) at each tested transition.
There are at most \(2K|\mathcal W_Y|\) transitions. Taking
expectations of the conditional bounds and then a union bound
shows that the probability of any tested transition failing is
at most
\begin{equation}\label{eq:alltransition}
O_\gamma\bigl(K(\log_3Y)(\log_2Y)^{-10}\bigr)=o(1).
\end{equation}

Let \(\mathcal H_Y\) be the intersection of the event that
all tested transitions satisfy \eqref{eq:transition}, the event
in Proposition~\ref{prop:goodlayers}, and the events in Step 3.
Then \(\PP(\mathcal H_Y)\to1\).

\medskip\noindent
{\sc Step 7.}
Fix a realization in \(\mathcal H_Y\). Choose a value
\(w\in\mathcal W_Y\) satisfying \eqref{eq:correctguess}.
Both constructions for this value have already been included
in \eqref{eq:alltransition}.
Using \(\max\{a,b\}^2\le a^2+b^2\) for \(a,b\ge0\), we get
\begin{align}
\sum_{j\in\mathcal J}\frac{1+\gamma}{2}u_j(w)^2
\le\frac{1+\gamma}{2}Ku_0^2
+\frac{(1+\gamma)(1-4\gamma)^2}
{1-(\log_2Y)^{-1}}\frac qw\log_2Y
\le(1-5\gamma)\log_2Y
\label{eq:totalbudget}
\end{align}
for sufficiently large \(Y\). Indeed,
\(Ku_0^2=O_\gamma\bigl((\log_2Y)^{1/20}(\log_3Y)^2\bigr)=o(\log_2Y)\),
\(q/w\le1\), and
\((1+\gamma)(1-4\gamma)^2=1-7\gamma+8\gamma^2+16\gamma^3\le1-6\gamma\)
for \(0<\gamma<1/20\).

We now verify that neither construction stops. Suppose the
preceding transitions have succeeded. Iterating
\eqref{eq:transition} gives, immediately before an active layer,
\begin{align*}
\log|S_{j-1}^{s,w}|-\frac{1+\gamma}{2}u_j(w)^2
&\ge\log|\mathcal X_Y|-K\log8
-\sum_{i\in\mathcal J}\frac{1+\gamma}{2}u_i(w)^2\\
&\ge5\gamma\log_2Y-100\log_3Y-K\log8+O(1)\\
&\ge4\gamma\log_2Y.
\end{align*}
Thus \eqref{eq:branchbudget} holds. The local assumptions also
hold on \(\mathcal H_Y\). Induction proves that all transitions
are performed and that both final sets are nonempty; in fact,
their cardinalities are at least \(\exp(4\gamma\log_2Y)/8\)
for sufficiently large \(Y\).

Choose \(x^+\in S_K^{1,w}\), and write \(x=x^+\) in the
following calculation.
Because the sets are nested, an endpoint in the final set has
satisfied every earlier active-layer inequality. For every
\(j\in\mathcal J\), we have
\begin{align*}
\frac{R_j(x)}{\sqrt{x}}
\ge u_j(w)\sqrt{\frac{V_j(x)}x}
\ge\lambda_w
\sqrt{\frac{v_j}{1-(\log_2Y)^{-1}}}
\sqrt{(1-(\log_2Y)^{-1})v_j}
=\lambda_wv_j.
\end{align*}
Summing over the active layers, at this same endpoint, gives
\[
\frac{S_*(x)}{\sqrt{x}}
\ge\lambda_w\sum_{j\in\mathcal J}v_j
=\lambda_wq
=(1-4\gamma)\sqrt{2q\log_2Y}\sqrt{\frac qw}.
\]
As \(w\le(1+\gamma)q\), we obtain
\begin{equation}\label{eq:signedgain}
\frac{S_*(x^+)}{\sqrt{x^+}}
\ge\frac{1-4\gamma}{\sqrt{1+\gamma}}\sqrt{2q\log_2Y}
\end{equation}
Choose \(x^-\in S_K^{-1,w}\). The corresponding inequalities
with \(s=-1\) give
\begin{equation}\label{eq:signedgain1}
\frac{S_*(x^-)}{\sqrt{x^-}}
\le-\frac{1-4\gamma}{\sqrt{1+\gamma}}\sqrt{2q\log_2Y}.
\end{equation}

\medskip\noindent
{\sc Step 8.}
By \eqref{eq:blockproof-layerbounds},
\(
\sqrt{q\log_2Y}\ge\sqrt{c_0}(\log_2Y)^{49/200}
\), therefore
\begin{equation}\label{eq:blockproof-errorratio}
\frac{(\log_2Y)^{24/100}}{\sqrt{q\log_2Y}}
\le C (\log_2Y)^{-1/200}\longrightarrow0.
\end{equation}
Equations \eqref{eq:blockproof-smallremainder} and
\eqref{eq:blockproof-errorratio} therefore show that all three
omitted sums in \eqref{eq:blockproof-remainder}, as well as the
constant \(1\), are negligible relative to
\(\sqrt{2xq\log_2Y}\).
By \eqref{eq:blockproof-grid} and
\eqref{eq:blockproof-fullclock}, uniformly over the grid,
\[
\frac{\sqrt{2Q(x)\log_2x}}{\sqrt{2xq\log_2Y}}
=\sqrt{\frac{Q(x)}{xq}\frac{\log_2x}{\log_2Y}}
=1+O\bigl((\log_2Y)^{-1/100}\bigr).
\]
Intersect \(\mathcal H_Y\) with the good event from Step~2.
This intersection still has probability tending to one.
By \eqref{eq:signedgain} and \eqref{eq:signedgain1}, for both
\((s,x)=(1,x^+)\) and \((s,x)=(-1,x^-)\),
\[
\frac{sM(x)}{\sqrt{2Q(x)\log_2x}}
\ge\frac{1-4\gamma}{\sqrt{1+\gamma}}-o(1) \ge c,
\]
for all sufficiently large \(Y\) on this intersection. This proves
\eqref{eq:blockexceed}.

\medskip\noindent
{\sc Step 9.}
Let \((Y_r)\) be any deterministic sequence tending to infinity.
Let \(E_r(c)\) denote the event that both inequalities
in \eqref{eq:blockexceed} hold at some points of
\(\mathcal X_{Y_r}\). By Step~8,
\(\PP(E_r(c))\to1\).

For every positive integer \(N\),

\[
\PP\biggl(\bigcup_{r\ge N}E_r(c)\biggr)
\ge\sup_{r\ge N}\PP(E_r(c))=1.
\]

Consequently, the event
\[
A(c)=\bigcap_{N\ge1}\bigcup_{r\ge N}E_r(c)
\]
has probability one. On this event, both signed exceedances
occur in infinitely many blocks. Since every point of
\(\mathcal X_{Y_r}\) is at least \(Y_r\), and \(Y_r\to\infty\),
we obtain, almost surely,
\[
\limsup_{x\to\infty}
\frac{M(x)}{\sqrt{2Q(x)\log_2x}}\ge c,
\qquad
\liminf_{x\to\infty}
\frac{M(x)}{\sqrt{2Q(x)\log_2x}}\le-c.
\]
Letting \(c\uparrow1\) along a countable sequence, we arrive at the lower bounds. Together with
Section~\ref{sec:upper}, this gives \eqref{eq:main}. Finally,
\eqref{eq:basicclock} implies
\(\log_2Q(x)/\log_2x\to1\), which proves the assertion
with \(\log_2Q(x)\) in the denominator.
\end{proof}

\begin{proof}[Proof of Corollary~\ref{cor:cluster}]
Put \(\varrho(x)=\sqrt{2Q(x)\log_2x}\) for \(x\ge3\) and
\(T_n=M(n)/\varrho(n)\) for integer \(n\ge3\). For \(x\ge3\), we have
\(M(x)=M(\lfloor x\rfloor)\) and \(Q(x)=Q(\lfloor x\rfloor)\), because
\(a_p(x)=a_p(\lfloor x\rfloor)\) for each prime \(p\). Hence,
\[
\frac{M(x)}{\varrho(x)}=T_{\lfloor x\rfloor}
\bigg(\frac{\log_2\lfloor x\rfloor}{\log_2x}\bigg)^{1/2},\qquad x\ge3.
\]
By Theorem~\ref{thm:main}, the sequence \((T_n)\) is eventually bounded
almost surely. Since \(\log_2\lfloor x\rfloor/\log_2x\to1\) as
\(x\to\infty\), the cluster set in \eqref{eq:cluster} coincides with the set of
subsequential limits of \((T_n)\), and
\[
\limsup_{n\to\infty}T_n=1,\qquad\liminf_{n\to\infty}T_n=-1
\qquad\text{a.s.}
\]

For integer \(n\ge3\), we have \(M(n)-M(n-1)=f(n)\). Also, by
\eqref{eq:vector}, \(a(n)-a(n-1)=\mu^2(n)f(n/P(n))e_{P(n)}\). Thus,
\[
|M(n)-M(n-1)|\le1
\qquad\text{and}\qquad
\big|\sqrt{Q(n)}-\sqrt{Q(n-1)}\big|\le\|a(n)-a(n-1)\|_2\le1.
\]
In view of the lower bound in \eqref{eq:basicclock}, this entails
\(\lim_{n\to\infty}Q(n-1)/Q(n)=1\) and thereupon
\(\lim_{n\to\infty}\varrho(n-1)/\varrho(n)=1\). Writing
\[
T_n-T_{n-1}=\frac{M(n)-M(n-1)}{\varrho(n)}
+T_{n-1}\bigg(\frac{\varrho(n-1)}{\varrho(n)}-1\bigg)
\]
and using the eventual boundedness of \((T_n)\), we conclude that
\(\lim_{n\to\infty}(T_n-T_{n-1})=0\) a.s.

Fix a sample point at which the last limit relation and both limit
relations for \((T_n)\) hold. For any \(z\in(-1,1)\), the sequence
\((T_n)\) crosses \(z\) upwards infinitely often. Hence, there exists an
increasing sequence \((n_k)\), depending on the chosen sample point, such
that \(T_{n_k-1}\le z<T_{n_k}\). Therefore,
\[
0<T_{n_k}-z\le T_{n_k}-T_{n_k-1}\to0,\qquad k\to\infty,
\]
so that \(z\) is a subsequential limit. The endpoints \(-1\) and \(1\) are
subsequential limits, and there are no subsequential limits outside
\([-1,1]\). This proves \eqref{eq:cluster}. Finally, \eqref{eq:basicclock}
implies \(\log_2Q(x)/\log_2x\to1\), which proves the assertion with
\(\log_2Q(x)\) in the denominator.
\end{proof}


\section{Deterministic normalization}\label{sec:deterministic}
\begin{lemma}\label{dn:limits}
Put {\(Z_x=Q(x)\sqrt{\log_2x}/x\)}.
Then, for some \(x_0>e\), the family \((Z_x)_{x\ge x_0}\) is tight.
On the other hand, for
every fixed \(B>0\), there is \(\delta_B>0\) such that
\begin{equation}\label{dn:tail}
\PP(Z_x>B)\ge\delta_B
\qquad\text{for all sufficiently large }x.
\end{equation}

\end{lemma}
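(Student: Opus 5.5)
The plan is to transfer Harper's low-moment estimates \cite{HarperLow} for \(M(x)\) to \(Q(x)\) through Burkholder's square-function inequality, and then extract both assertions from fractional moments of \(Z_x\). Fix \(x\) and consider the martingale \(y\mapsto\Psi(x,y)-1=\sum_{p\le y}\varepsilon_pa_p(x)\) with respect to \((\FF_y)\). Its terminal value is \(M(x)-1\). Since \(\varepsilon_p^2=1\), its square function \(\sum_p\varepsilon_p^2a_p(x)^2\) equals \(Q(x)\) exactly. For every \(1<\rho<2\), Burkholder's inequality therefore gives
\[
c_\rho\,\E|M(x)-1|^{\rho}\le \E Q(x)^{\rho/2}\le C_\rho\,\E|M(x)-1|^{\rho},
\]
with constants bounded for \(\rho\) in compact subsets of \((1,2)\). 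Harper's theorem states \(\E|M(x)|^{2r}\asymp\bigl(x/(1+(1-r)\sqrt{\log_2x})\bigr)^{r}\), uniformly for \(2/3\le r\le1\). The additive constant \(1\) is negligible here. Taking \(\rho=2r\), I obtain for every fixed \(r\in[2/3,1)\)
\[
\E Z_x^{r}=\frac{(\log_2x)^{r/2}}{x^r}\,\E Q(x)^{r}\asymp_r\Bigl(\frac{\sqrt{\log_2x}}{1+(1-r)\sqrt{\log_2x}}\Bigr)^{r}\asymp (1-r)^{-r}
\]
for all sufficiently large \(x\).

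Tightness then follows at once. Taking \(r=3/4\) gives \(\sup_{x\ge x_0}\E Z_x^{3/4}<\infty\), and Markov's inequality yields \(\PP(Z_x>K)\le CK^{-3/4}\) uniformly in \(x\ge x_0\).

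For \eqref{dn:tail}, I use a Paley--Zygmund-type argument between two exponents \(r<s<1\). Fix \(B>0\) and choose \(s\in(r,1)\), say \(s=(1+r)/2\). Hölder's inequality gives
\[
\E Z_x^{r}\le B^{r}+\E\bigl[Z_x^{r}\1_{\{Z_x>B\}}\bigr]\le B^r+(\E Z_x^{s})^{r/s}\,\PP(Z_x>B)^{1-r/s}.
\]
The lower bound \(\E Z_x^{r}\ge c(1-r)^{-r}\) holds for large \(x\), with \(c\) independent of \(r\) by the uniformity in Harper's result and in Burkholder's constants near \(\rho=2\). Choose \(r<1\) so close to \(1\) that \(c(1-r)^{-r}\ge B^r+1\). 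Then \(s\) is fixed, \(\E Z_x^{s}\le C_s\) for large \(x\), and hence
\[
\PP(Z_x>B)\ge C_s^{-r/(s-r)}=:\delta_B>0
\]
for all sufficiently large \(x\).

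The main obstacle is to ensure that the constants in the two-sided bound for \(\E Z_x^{r}\) are uniform as \(r\uparrow1\), so that \(\E Z_x^r\) genuinely grows like \((1-r)^{-r}\) rather than only being \(\asymp_r1\). This requires two things. First, Harper's estimate must be quoted in its uniform form for \(q\) near \(1\). Second, the Burkholder constants must stay bounded for \(\rho\) near \(2\); this is harmless, since the inequality is two-sided with constants continuous in \(\rho\in(1,2]\). If uniformity near \(r=1\) were not available, I would instead take \(r=r(x)=1-A/\sqrt{\log_2x}\) with \(A\) large depending on \(B\). For this \(r\), Harper's bound still gives \(\E Z_x^{r}\gg A^{-1}\sqrt{\log_2x}\)-type growth, which is compared against a fixed \(s<1\) in the same Hölder step.
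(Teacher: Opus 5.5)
Your proposal is correct and follows the paper's proof essentially step by step. Both use Burkholder's inequality with exponent $2r$, whose square function is exactly $\sqrt{Q(x)}$, to transfer Harper's uniform low-moment asymptotics to $\E Z_x^r$, then Markov's inequality at $r=3/4$ for tightness, and finally the Hölder step between exponents $r<s<1$, which gives $\delta_B=C_s^{-r/(s-r)}$; the fallback with $r=r(x)$ is not needed.
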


\begin{proof}

\medskip
\noindent
{\sc Step 1.}
We first prove that, uniformly for \(3/4\le r\le1\) and
all sufficiently large \(x\),

\begin{equation}\label{dn:z-moments}
\frac{c}{((\log_2x)^{-1/2}+1-r)^r}
\le\E Z_x^r\le
\frac{C}{((\log_2x)^{-1/2}+1-r)^r},
\end{equation}
for some constants  \(c,C>0\).

We use
Harper's theorem for squarefree Rademacher functions
\cite[Theorem~2]{HarperLow}, which says that uniformly for large \(x\)
and \(0\le s\le1\),

\begin{equation}\label{dn:harper}
\E|M(x)|^{2s}\asymp
\biggl(\frac{x}{1+(1-s)\sqrt{\log_2x}}\biggr)^s.
\end{equation}

For every fixed $x$,
the martingale structure \eqref{eq:decomp} combined with  Burkholder's square-function inequality
\cite[Theorem~3.1]{Burkholder} allows us to compare for
$3/4\le r\le1$ moments of $M(x)-1$ and $Q(x)$:
\begin{equation}\label{dn:square-bound}
\frac14\E|M(x)-1|^{2r}
\le\E Q(x)^r\le4\E|M(x)-1|^{2r}.
\end{equation}
Indeed, we can apply Burkholder's inequality to the martingale-difference sequence in \eqref{eq:decomp}, with \(p=2r\).
For \(3/4\le r\le1\), we have \(3/2\le p\le2\) and
\(p^*-1\le2\), where \(p^*=\max\{p,p/(p-1)\}\).
Moreover, the square function is exactly \(\sqrt{Q(x)}\),
since \((\varepsilon_pa_p(x))^2=a_p(x)^2\).

To compare moments of  $M(x)-1$ and  of  $M(x)$, we use
Minkowski's inequality
\[
\big|\|M(x)-1\|_{2r}-\|M(x)\|_{2r}\big|\le1,
\]
together with the lower bound in \eqref{dn:harper},
\[
\inf_{3/4\le r\le1}\|M(x)\|_{2r}
\ge c\sqrt{x/\sqrt{\log_2x}}\longrightarrow\infty,
\]
to conclude that  for all sufficiently large \(x\), uniformly in
\(3/4\le r\le1\),
\[
\frac12\|M(x)\|_{2r}
\le\|M(x)-1\|_{2r}\le2\|M(x)\|_{2r}.
\]
Combining \eqref{dn:square-bound} and
\eqref{dn:harper} yields

\begin{equation}\label{dn:q-moments}
\E Q(x)^r\asymp
\biggl(\frac{x}{1+(1-r)\sqrt{\log_2x}}\biggr)^r,
\qquad 3/4\le r\le1.
\end{equation}

Multiplying both sides  by \((\sqrt{\log_2x}/x)^r\) gives \eqref{dn:z-moments}.

\medskip
\noindent
{\sc Step 2.}
The upper bound in \eqref{dn:z-moments}, at \(r=3/4\), gives
\(\sup_{x\ge x_0}\E Z_x^{3/4}<\infty\). Markov's inequality
therefore proves tightness.

\medskip
\noindent
{\sc Step 3.}
To prove \eqref{dn:tail}, fix \(B>0\). The lower bound
in \eqref{dn:z-moments} allows us to choose
\(r\in[3/4,1)\) such that
\[
\E Z_x^r\ge B^r+1
\]
for all sufficiently large \(x\).
Fix \(s\in(r,1)\). The upper bound in
\eqref{dn:z-moments} gives
\(\E Z_x^s\le C_s\) for all sufficiently large \(x\),
where \(C_s\ge1\) is finite.
By H\"older's inequality,
\[
\E Z_x^r
\le B^r+
\E\big[Z_x^r\1_{\{Z_x>B\}}\big]
\le B^r+
C_s^{r/s}\PP(Z_x>B)^{1-r/s}.
\]
Consequently,
\[
\PP(Z_x>B)\ge C_s^{-r/(s-r)}.
\]
This proves \eqref{dn:tail} with
\(\delta_B=C_s^{-r/(s-r)}>0\).
\end{proof}

\begin{proof}[Proof of Theorem~\ref{dn:clock}]
Suppose that \(Q(x_k)/q_k\to1\) in probability. Put
\(b_k=q_k\sqrt{\log_2{x_k}}/x_k\). Then
\begin{equation}\label{dn:deterministic-ratio}
Z_{x_k}=b_k\,\frac{Q(x_k)}{q_k}.
\end{equation}
If \(b_k\to\infty\) along a subsequence, then \(Z_{x_k}\to\infty\)
in probability along that subsequence, contradicting tightness.
If \(b_k\to0\) along a subsequence, then \(Z_{x_k}\to0\) in
probability, contradicting Lemma~\ref{dn:limits}.
Thus \((b_k)\) is eventually bounded above and bounded away
from zero. Choose a subsequence along which
\(b_k\to b\in(0,\infty)\). By \eqref{dn:deterministic-ratio},
\(Z_{x_k}\to b\) in probability, contradicting \eqref{dn:tail}, because
\(\PP(Z_{x_k}>b+1)\to0\).
The almost sure assertion follows since almost sure convergence
implies convergence in probability.
\end{proof}

\begin{proof}[Proof of Theorem~\ref{dn:critical}]
Recall \(b(x)\) from \eqref{dn:amplitude-definition}. Then
\[
\frac{M(x)}{b(x)}
=
\frac{M(x)}{\sqrt{2Q(x)\log_2x}}\sqrt{2Z_x}.
\]
We shall use this identity to transfer large values of \(Z_Y\)
to positive and negative excursions of \(M(x)/b(x)\) at
points in the same block.

\medskip\noindent
{\sc Step 1. }
Consider the deterministic grid $\mathcal X_Y$ defined in \eqref{eq:blockparameters}.
By Step~8 of the proof of the lower bounds in Theorem \ref{thm:main}, with
\(c=1/2\), and the clock comparison
\eqref{eq:blockproof-fullclock}, there are events \(E_Y\),
with \(\PP(E_Y)\to1\), on which the following two conclusions
hold simultaneously:
first, there exist \(x_+,x_-\in\mathcal X_Y\) such that
\begin{equation}\label{dn:block-signs}
\frac{M(x_+)}{\sqrt{2Q(x_+)\log_2x_+}}\ge\frac12,
\qquad
\frac{-M(x_-)}{\sqrt{2Q(x_-)\log_2x_-}}\ge\frac12,
\end{equation}
and \begin{equation}\label{dn:flatness}
\sup_{x\in\mathcal X_Y}
\bigg|\frac{Q(x)}{xq_Y}-1\bigg|
\le e_Y,
\qquad e_Y\longrightarrow0,
\end{equation}
where \(q_Y>0\) is the total \(q\) from \eqref{eq:q},
and \(e_Y\) is deterministic.

Since \(Y\in\mathcal X_Y\), \eqref{dn:flatness} gives
\[
\frac{Q(x)/x}{Q(Y)/Y}
=
\frac{Q(x)/(xq_Y)}{Q(Y)/(Yq_Y)}
=1+O(e_Y),
\qquad x\in\mathcal X_Y,
\]
uniformly on \(E_Y\).

By \eqref{eq:blockproof-grid},
\(\log_2x/\log_2Y=1+O((\log_2Y)^{-101})\) uniformly
on the grid. Combining these estimates gives
\begin{equation}\label{dn:normalized-flatness}
\frac{Z_x}{Z_Y}
=
\frac{Q(x)/x}{Q(Y)/Y}
\sqrt{\frac{\log_2x}{\log_2Y}}
=
1+O(e_Y)+O((\log_2Y)^{-101}),
\qquad x\in\mathcal X_Y.
\end{equation}
The error bound is deterministic and uniform on \(E_Y\).
In particular, for all sufficiently large \(Y\) on $E_Y$, for all \(x\in\mathcal X_Y\),
\[
Z_x\ge\frac12 Z_Y.
\]

\medskip\noindent
{\sc Step 2.}
At the endpoints in \eqref{dn:block-signs},
\[
\begin{split}
\frac{M(x_+)}{b(x_+)}
&\ge\frac12\sqrt{2Z_{x_+}}
\ge\frac12\sqrt{Z_Y},\\
\frac{-M(x_-)}{b(x_-)}
&\ge\frac12\sqrt{2Z_{x_-}}
\ge\frac12\sqrt{Z_Y}.
\end{split}
\]
Thus, for all sufficiently large \(Y\),
\begin{equation}\label{dn:block-transfer}
\max_{x\in\mathcal X_Y}\frac{M(x)}{b(x)}
\ge\frac12\sqrt{Z_Y},
\qquad
\max_{x\in\mathcal X_Y}\frac{-M(x)}{b(x)}
\ge\frac12\sqrt{Z_Y}
\quad\text{on }E_Y.
\end{equation}

Fix \(R>0\), and put \(B=16R^2\).
By \eqref{dn:tail}, there is \(\delta_B>0\) such that
\[
\PP(Z_Y>B)\ge\delta_B
\]
for all sufficiently large \(Y\).
Define
\(
\mathcal A_Y=E_Y\cap\{Z_Y>B\}
\), then
\[
\PP(\mathcal A_Y)
=\PP(Z_Y>B)-\PP(\{Z_Y>B\}\cap E_Y^c)
\ge\delta_B-\PP(E_Y^c).
\]
It follows that
\[
\liminf_{Y\to\infty}\PP(\mathcal A_Y)\ge\delta_B.
\]
On \(\mathcal A_Y\), \eqref{dn:block-transfer} yields
\[
\max_{x\in\mathcal X_Y}\frac{M(x)}{b(x)}>2R,
\qquad
\max_{x\in\mathcal X_Y}\frac{-M(x)}{b(x)}>2R.
\]

\medskip\noindent
{\sc Step 3.}
Let \((Y_j)\) be any deterministic sequence tending to infinity,
and put
\[
\mathcal A
=
\bigcap_{N\ge1}\bigcup_{j\ge N}\mathcal A_{Y_j}.
\]
Then, \[
\PP(\mathcal A)
=\lim_{N\to\infty}
\PP\bigg(\bigcup_{j\ge N}\mathcal A_{Y_j}\bigg)\ge\lim_{N\to\infty}
\sup_{j\ge N}\PP(\mathcal A_{Y_j})=\limsup_{j\to\infty}\PP(\mathcal A_{Y_j})
\ge\delta_B>0.
\]

For every realization in \(\mathcal A\), there are infinitely
many indices \(j\) for which we can choose
\(x_{j,+},x_{j,-}\in\mathcal X_{Y_j}\) satisfying
\[
\frac{M(x_{j,+})}{b(x_{j,+})}>2R,
\qquad
\frac{-M(x_{j,-})}{b(x_{j,-})}>2R.
\]
Since the selected endpoints tend to infinity, both signed
limsups are at least \(2R\), and hence greater than \(R\),
on the same event of positive probability. Finally,
\(R^*\) dominates both signed limsups, so
\(\PP(R^*>R)>0\) for every finite \(R>0\).
\end{proof}


\begin{proof}[Proof of Proposition~\ref{dn:zero-one}]

\medskip\noindent
{\sc Step 1. }
Fix a prime \(p\), and let \(\widehat f\) be obtained from \(f\)
by replacing \(\varepsilon_p\) by \(-\varepsilon_p\), leaving
all other prime signs unchanged. Write
\[
\widehat M(x)=\sum_{n\le x}\widehat f(n),\qquad
\widehat R^*=\limsup_{x\to\infty}
\frac{|\widehat M(x)|}{b(x)}.
\]
The sum over integers not divisible by \(p\),
\[
M_p(x)=\sum_{\substack{n\le x\\p\nmid n}}f(n),
\]
is the same for the two sign configurations.
Every squarefree integer divisible by \(p\) is uniquely
\(pm\), with \(p\nmid m\). Therefore,
\[
M(x)=M_p(x)+\varepsilon_pM_p(x/p),\qquad
\widehat M(x)=M_p(x)-\varepsilon_pM_p(x/p).
\]
Rearranging the first identity gives
\(M_p(x)=M(x)-\varepsilon_pM_p(x/p)\).
Iterating this relation, with all sums zero below one, and writing
\(k_x=\lfloor\log x/\log p\rfloor\), gives
\[
M_p(x)=\sum_{k=0}^{k_x}(-\varepsilon_p)^kM(x/p^k).
\]
Substitution into the second identity yields the exact formula
\begin{equation}\label{dn:prime-flip-identity}
\widehat M(x)
=M(x)+2\sum_{k=1}^{k_x}(-\varepsilon_p)^kM(x/p^k).
\end{equation}

\medskip\noindent
{\sc Step 2. }
Suppose first that \(R^*<\infty\), and fix \(\epsilon>0\).
Choose \(A>e^e\), depending on the sign configuration, such that
\[
|M(u)|\le(R^*+\epsilon)b(u),\qquad u\ge A.
\]
For \(x/p^k\ge A\), the definition of \(b\) gives
\[
\frac{b(x/p^k)}{b(x)}
=p^{-k/2}
\bigg(\frac{\log_2(x/p^k)}{\log_2x}\bigg)^{1/4}
\le p^{-k/2}.
\]
For the remaining terms in \eqref{dn:prime-flip-identity},
the arguments lie in \([1,A)\). There are at most
\(1+\log A/\log p\) such terms, and each has absolute
value at most \(A\). Thus, for \(x\ge A\),
\[
\begin{split}
\frac{|\widehat M(x)|}{b(x)}
&\le(R^*+\epsilon)
\bigg(1+2\sum_{k\ge1}p^{-k/2}\bigg)
+\frac{2A(1+\log A/\log p)}{b(x)}\\
&=(R^*+\epsilon)\frac{\sqrt p+1}{\sqrt p-1}
+\frac{2A(1+\log A/\log p)}{b(x)}.
\end{split}
\]
Taking the limsup and then letting \(\epsilon\downarrow0\)
shows that
\(\widehat R^*\le((\sqrt p+1)/(\sqrt p-1))R^*\).
This inequality is trivial when \(R^*=\infty\).
By symmetry,  the same bound holds in the reverse direction. Consequently, in the extended
nonnegative real line,
\begin{equation}\label{dn:prime-flip-comparison}
\frac{\sqrt p-1}{\sqrt p+1}R^*
\le\widehat R^*
\le\frac{\sqrt p+1}{\sqrt p-1}R^*.
\end{equation}
In particular, both zero and finiteness are preserved.
Iterating the comparison proves invariance of the events
\(\{R^*=0\}\) and \(\{R^*<\infty\}\) under changes of
finitely many prime signs.

\medskip\noindent
{\sc Step 3. }
Enumerate the primes as \(p_1<p_2<\cdots\), and put
\(\FF_\infty=\sigma(\varepsilon_p:p\text{ prime})\).
Since \(M\) is constant on \([n,n+1)\) and \(b\) is eventually
increasing,
\[
R^*=\limsup_{n\to\infty}\frac{|M(n)|}{b(n)}.
\]
Thus \(R^*\) is an extended nonnegative
\(\FF_\infty\)-measurable random variable.

Fix \(m\ge1\), set the signs at \(p_1,\ldots,p_m\) equal
to \(+1\), and leave every remaining sign unchanged.
Denote the corresponding amplitude by \(R^{*,m}\).
It is measurable with respect to
\(\sigma(\varepsilon_{p_r}:r>m)\). By Step~2,
\[
\{R^*<\infty\}=\{R^{*,m}<\infty\},\qquad
\{R^*=0\}=\{R^{*,m}=0\}.
\]
Both events therefore belong to
\(\sigma(\varepsilon_{p_r}:r>m)\) for every \(m\), and hence
to the tail sigma-field of the prime signs.
Kolmogorov's zero--one law
\cite[Theorem~2.5.3]{Durrett} gives
\[
\PP(R^*<\infty)\in\{0,1\},\qquad
\PP(R^*=0)\in\{0,1\}.
\]
Theorem~\ref{dn:critical} implies \(\PP(R^*>1)>0\), so
\(\PP(R^*=0)=0\). This proves the stated alternative.
If \(R^*\) is finite almost surely, its support is unbounded
by Theorem~\ref{dn:critical}, so it is nonconstant.
\end{proof}


\appendix

\section{Upper estimates: proof of Proposition \ref{prop:interpolation}}\label{sec:interpolation}
Throughout this section, \(Y=\exp(2^r)\), where the integer
\(r\) tends to infinity, and \(0<\alpha<1/4\) is fixed. We write
\begin{equation}\label{eq:upperparameters}
h=(\log Y)^{-\alpha},\quad
y_0=\exp(\log Y/(\log_2Y)^2),\quad N=\lfloor Y^2\rfloor,\quad\delta=(\log Y)^{-250}.
\end{equation}
Here \(h\) is the logarithmic spacing of the coarse grid, \(y_0\) is the
cutoff separating small and large primes, \(N\) truncates the sums used
below, and \(\delta\) is the relative width of the intervals used to
smooth the prime index.

We estimate the distance \(d(x,y)\) defined in
\eqref{eq:distance}. We first introduce auxiliary notation.
For \(u\ge0\) and \(b\ge1\) define
\begin{equation}\label{eq:psin}
\Psi_N(u,b)=\sum_{\substack{n\le\min(u,N)\\P(n)\le b}}f(n).
\end{equation}
For a fixed cutoff \(b\), the function \(\Psi_N(u,b)\) sums the terms
whose arguments are at most \(u\) and whose prime factors are at most
\(b\). In particular, for \(z\in[Y,Y^2]\),
\begin{equation}\label{eq:apsin}
a_p(z)=\Psi_N(z/p,p^-).
\end{equation}
Put
\begin{align}
A_N(s)&=\int_0^\infty
\sup_{b\ge1}|\Psi_N(e^su,b)-\Psi_N(u,b)|^2\frac{\dd u}{u^2},
\label{eq:an}\\
R_Y(z)&=\sum_{p>y_0}\frac1{\delta p}\int_p^{(1+\delta)p}
|\Psi_N(z/p,p^-)-\Psi_N(z/t,p^-)|^2\dd t.
\label{eq:rt}
\end{align}
The supremum over \(b\) in \(A_N\) provides a single integral bound for all the prime-dependent cutoffs. We begin with the following estimate.

\begin{lemma}\label{lem:smoothingupper}
For all \(Y\le x\le y\le Y^2\) with \(\theta=\log(y/x)\le h\),
we have
\begin{equation}\label{eq:smoothingupper}
\frac{d(x,y)^2}{x}\le C\biggl(
\sup_{Y\le z\le Y^2}\frac{\|a_{\le y_0}(z)\|_2^2}{z}
+\frac{(\log_2Y)^2A_N(\theta)}{(\log Y)}+\frac{R_Y(x)}x+\frac{R_Y(y)}y\biggr),
\end{equation}
where \(a_{\le y_0}(x)=(a_p(x)\1_{\{p\le y_0\}})_p\).
\end{lemma}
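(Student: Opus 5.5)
We split the coordinates of \(a(y)-a(x)\) according to whether \(p\le y_0\) or \(p>y_0\). For \(p\le y_0\) we bound crudely:
\[
\sum_{p\le y_0}(a_p(y)-a_p(x))^2\le 2\|a_{\le y_0}(y)\|_2^2+2\|a_{\le y_0}(x)\|_2^2 .
\]
Since \(x\le y\le ex\), this is at most \(Cx\sup_{Y\le z\le Y^2}\|a_{\le y_0}(z)\|_2^2/z\), which gives the first term.

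For \(p>y_0\) we use \eqref{eq:apsin}, \(a_p(z)=\Psi_N(z/p,p^-)\), and smooth the prime index over the interval \(t\in[p,(1+\delta)p]\). For each such \(t\) we write
\[
a_p(y)-a_p(x)=\bigl[\Psi_N(y/p,p^-)-\Psi_N(y/t,p^-)\bigr]+\bigl[\Psi_N(y/t,p^-)-\Psi_N(x/t,p^-)\bigr]+\bigl[\Psi_N(x/t,p^-)-\Psi_N(x/p,p^-)\bigr].
\]
We square, use \((a+b+c)^2\le3(a^2+b^2+c^2)\), average in \(t\) with weight \(1/(\delta p)\) over \([p,(1+\delta)p]\), and sum over \(p>y_0\). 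The first and third brackets produce exactly \(3R_Y(y)\) and \(3R_Y(x)\). Since \(y\le ex\), \(R_Y(y)\le eR_Y(y)\,x/y\), so after dividing by \(x\) these become the terms \(R_Y(x)/x+R_Y(y)/y\), up to constants.

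The middle bracket is bounded by \(\sup_{b\ge1}|\Psi_N(e^\theta u,b)-\Psi_N(u,b)|^2\) with \(u=x/t\), because \(y/t=e^\theta x/t\). This removes the dependence of the cutoff on \(p\). The resulting contribution is
\[
\int_{y_0}^\infty G(x/t)\,w(t)\,dt,
\]
where \(G(u)=\sup_b|\Psi_N(e^\theta u,b)-\Psi_N(u,b)|^2\) and \(w=w_{y_0,\infty,\delta}\) is the weight of Lemma~\ref{lem:prime-averaging}. The bound on \(w\) from that lemma, valid for all \(t\) in the support, gives \(w(t)\le C(1+\delta^{-1}e^{-c\sqrt{\log t}})/\log t\). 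For \(t>y_0\) we have \(\log t\ge\log Y/(\log_2Y)^2\), and
\[
\delta^{-1}e^{-c\sqrt{\log t}}=(\log Y)^{250}\exp\bigl(-c\sqrt{\log Y}/\log_2Y\bigr)=o(1),
\]
so \(w(t)\le C(\log_2Y)^2/\log Y\) on that range.

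We then substitute \(u=x/t\), so \(dt=x\,du/u^2\), and extend the integral to \(u\in(0,\infty)\) since \(G\ge0\). This yields at most \(Cx(\log_2Y)^2A_N(\theta)/\log Y\). Dividing everything by \(x\) gives \eqref{eq:smoothingupper}.

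The main point to check is this last step. One must verify that the weight bound holds uniformly, including near \(t\approx y_0\) where the lemma's main formula need not apply; there the crude support bound suffices. One must also check that the truncation at \(N\) in \(\Psi_N\) is harmless: for \(z\le Y^2\) the relevant arguments satisfy \(z/t\le N\), and the definition of \(A_N\) is compatible with this.
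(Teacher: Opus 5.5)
Your proposal is correct and follows the paper's proof essentially step for step: the same split at $y_0$, the same three-term decomposition after smoothing the prime index over $[p,(1+\delta)p]$, and the same use of the support bound from Lemma~\ref{lem:prime-averaging} followed by Tonelli and the substitution $u=x/t$. The differences are cosmetic. You use $(a+b+c)^2\le 3(a^2+b^2+c^2)$ instead of $2A^2+4B^2+4C^2$. You also write out the conversions via $y\le ex$ (the small-prime bound and $R_Y(y)/x\le eR_Y(y)/y$), which the paper leaves implicit.
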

\begin{proof}
The first term on the right-hand side accounts for the contribution to \(d(x,y)\) from small primes.
It therefore suffices to consider primes \(p>y_0\).
For \(t\in[p,(1+\delta)p]\), define
\[
g_p(t)=\Psi_N(y/t,p^-)-\Psi_N(x/t,p^-).
\]
Thus for \(t=p\), by \eqref{eq:apsin},
\(a_p(y)-a_p(x)=g_p(p)\). For general \(t\) in this interval we have
the exact decomposition
\begin{align*}
g_p(p)=g_p(t)
&+\bigl(\Psi_N(y/p,p^-)-\Psi_N(y/t,p^-)\bigr)
-\bigl(\Psi_N(x/p,p^-)-\Psi_N(x/t,p^-)\bigr).
\end{align*}
Applying \((A+B+C)^2\le2A^2+4B^2+4C^2\), averaging over
\(t\in[p,(1+\delta)p]\) with measure \(\dd t/(\delta p)\),
and summing over \(p>y_0\)
\begin{equation}\label{eq:largepdiff}
\sum_{p>y_0}|a_p(y)-a_p(x)|^2\le2I+4R_Y(y)+4R_Y(x),
\end{equation}
where \[
I=\sum_{p>y_0}\frac1{\delta p}\int_p^{(1+\delta)p}|g_p(t)|^2\dd t.
\]
It remains to bound \(I\).

Put
\[
H_Y(t)=\sum_{\substack{p>y_0\\p\le t\le(1+\delta)p}}
\frac1{\delta p}.
\]
Lemma~\ref{lem:prime-averaging}, with lower cutoff \(y_0\), gives
\begin{equation}\label{eq:ht}
H_Y(t)\ll\frac1{\log t}\le\frac1{\log y_0}
=\frac{(\log_2Y)^2}{(\log Y)}.
\end{equation}
Indeed, \(\delta^{-1}e^{-c\sqrt{\log y_0}}\to 0\), by
\eqref{eq:upperparameters}.
By Tonelli's theorem and \eqref{eq:ht},
\begin{align*}
I&=\int_{y_0}^\infty
\sum_{\substack{p>y_0\\p\le t\le(1+\delta)p}}
\frac{|g_p(t)|^2}{\delta p}\dd t\\
&\le\frac{C(\log_2Y)^2}{(\log Y)}\int_{y_0}^\infty
\sup_{b\ge1}|\Psi_N(y/t,b)-\Psi_N(x/t,b)|^2\dd t.
\end{align*}
Setting \(\theta=\log(y/x)\) and substituting \(u=x/t\), we have
\[
I\le\frac{C(\log_2Y)^2}{(\log Y)}\,xA_N(\theta).
\]
\end{proof}

We next bound the terms on the right-hand side of \eqref{eq:smoothingupper}.
\begin{lemma}\label{lem:smallprimes}
Almost surely, for all sufficiently large \(Y\) with dyadic \(\log Y\),

\begin{equation}\label{eq:smallprimes}
\sup_{Y\le x\le Y^2}\frac{\|a_{\le y_0}(x)\|_2^2}{x}\le (\log Y)^{-20}.
\end{equation}
\end{lemma}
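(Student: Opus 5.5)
The plan is to bound the first moment of \(\|a_{\le y_0}(x)\|_2^2\) and then pass to a supremum over \(x\in[Y,Y^2]\) by a discretization argument. For each \(x\), orthogonality \eqref{eq:orth} together with the unique decomposition \(n=pm\) with \(p=P(n)\) gives
\[
\E\|a_{\le y_0}(x)\|_2^2=\sum_{p\le y_0}\sum_{\substack{m\le x/p\\P(m)<p}}\mu^2(m)=\sfcount(x,y_0)-1 .
\]
Lemma~\ref{lem:rankin} with a fixed \(A\) (say \(A=1\)) and \(\log x\ge\log Y\), \(\log y_0=\log Y/(\log_2Y)^2\), then yields
\[
\E\|a_{\le y_0}(x)\|_2^2\le C x\,\frac{\log Y}{(\log_2 Y)^2}\exp\bigl(-(\log_2Y)^2\bigr),
\]
which decays faster than any power of \(\log Y\) relative to \(x\).

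Two structural facts make the supremum manageable. First, \(\|a_{\le y_0}(x)\|_2^2\) is piecewise constant in \(x\), changing only at integers, so the supremum over real \(x\) equals the supremum over integers \(n\in[Y,Y^2]\). Second, and more usefully, \(\|a_{\le y_0}(x)\|_2^2\) is monotone up to controlled error: by \eqref{eq:vector}, the vector \(a_{\le y_0}(x)\) gains the coordinate contribution \(f(n/P(n))\) at \(e_{P(n)}\) only for \(y_0\)-smooth squarefree \(n\). Hence for \(z\le x\) we have
\[
\|a_{\le y_0}(x)-a_{\le y_0}(z)\|_2^2\le \Bigl(\#\{z<n\le x: P(n)\le y_0\}\Bigr)^2 ,
\]
or more cleanly, by Cauchy--Schwarz over the at most \(\pi(y_0)\) coordinates, a bound by the number of smooth integers in \((z,x]\) times the coordinate increments. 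I would avoid this and use a simpler geometric grid instead.

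Take the grid \(z_k=Ye^{k/(\log Y)^{30}}\), which has \(O((\log Y)^{31})\) points. For \(x\in[z_k,z_{k+1}]\), bound
\[
\|a_{\le y_0}(x)\|_2\le\|a_{\le y_0}(z_{k+1})\|_2+\|a_{\le y_0}(x)-a_{\le y_0}(z_{k+1})\|_2 ,
\]
and control the second term deterministically. Its \(p\)th coordinate is a sum of at most \(\Psi\)-type counts of smooth integers in \((x/p,z_{k+1}/p]\). The difficulty is that a deterministic bound of size \(z_{k+1}/p-x/p\) squared and summed is too large. To handle this, apply Markov's inequality instead to
\[
\E\sup_{x\in[z_k,z_{k+1}]}\|a_{\le y_0}(x)-a_{\le y_0}(z_k)\|_2^2\le\E\sum_{p\le y_0}\Bigl(\sum_{\substack{z_k/p<m\le z_{k+1}/p\\P(m)<p}}|f(m)|\Bigr)^2 .
\]
This step is crude, and it is the main obstacle. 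An alternative I would try first is simply the whole-interval bound: since every term of the Rankin estimate carries the factor \(\exp(-(\log_2Y)^2)\), the deterministic bound
\[
\sup_{x\le Y^2}\|a_{\le y_0}(x)\|_2^2\le\sum_{p\le y_0}\bigl(\sfcount(Y^2/p,p)\bigr)^2
\]
might be too large. So the fallback is to bound \(\E\max_{n\in[Y,Y^2]\cap\mathbb Z}\) by the sum over all integers \(n\) of \(\E\|a_{\le y_0}(n)\|_2^2/n\), which is at most
\[
Y^2\cdot C\log Y\,e^{-(\log_2Y)^2}.
\]
This fails for the factor \(Y^2\), so one does need the grid together with monotone control.

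The cleanest route to monotone control is to use the decomposition \(a_p(x)=\Psi(x/p,p^-)\) and a fourth-moment (or second-moment) maximal inequality for \(u\mapsto\Psi(u,y_0)\)-type sums over smooth integers in short intervals. The number of \(y_0\)-smooth squarefree integers in \((z_k,z_{k+1}]\) has expectation (and in fact deterministic size) \(\sfcount\)-controlled by Rankin, namely \(\ll z_k e^{-(\log_2Y)^2/2}\). Because every coordinate increment comes from such integers, a deterministic bound
\[
\|a_{\le y_0}(x)-a_{\le y_0}(z_k)\|_2\le\#\{z_k<n\le z_{k+1}:P(n)\le y_0\}
\]
holds. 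This quantity is deterministic and at most \(\sfcount(Y^2\!,y_0)\)-small relative to \(\sqrt{z_k}\) only if the smooth count is \(\le \sqrt{x}(\log Y)^{-10}\); Rankin with a larger \(A\) (choose \(A\) with \(A\log x/\log y_0\ge \tfrac12\log x+\dots\), that is, \(A\approx(\log_2Y)^2\) made uniform in the lemma) gives smooth counts \(\ll x^{1/2}\). Verifying the uniformity in \(A\) is the key step. Granting it, Markov plus a union bound over the grid gives a failure probability of \(O((\log Y)^{31+20}e^{-(\log_2Y)^2/2})\), which is summable over \(\log Y=2^r\), and Borel--Cantelli finishes the proof.
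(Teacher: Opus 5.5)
Your first-moment identity \(\E\|a_{\le y_0}(x)\|_2^2=\sfcount(x,y_0)-1\), the Rankin bound with \(A=1\), and the Markov/union-bound/Borel--Cantelli skeleton are all fine. The gap is the control of the supremum in \(x\), and the route you settle on cannot work.

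The deterministic bound \(\|a_{\le y_0}(x)-a_{\le y_0}(z_k)\|_2\le\#\{z_k<n\le z_{k+1}:P(n)\le y_0\}\) is true, but far too weak. For \(x\in[Y,Y^2]\) one has \(u=\log x/\log y_0\asymp(\log_2Y)^2\). Hence the number of squarefree \(y_0\)-smooth integers up to \(x\) is \(x\exp(-O(u\log u))=x^{1-o(1)}\), not \(O(x^{1/2})\). Your \(O((\log Y)^{31})\) cells together contain \(Y^{2-o(1)}\) such integers, so by pigeonhole some cell contains \(Y^{2-o(1)}\) of them. You would need at most \(Y(\log Y)^{-10}\) there.

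No choice of \(A\) in Rankin's method can help, because an upper bound cannot go below the true count. Concretely, the condition \(A\log x/\log y_0\ge\frac12\log x\) means \(A\ge\frac12\log y_0\), not \(A\approx(\log_2Y)^2\). That is \(\sigma=1/2\), and then \(\prod_{p\le y_0}(1+p^{-1/2})=\exp((2+o(1))\sqrt{y_0}/\log y_0)\) is vastly larger than \(x\). Lemma~\ref{lem:rankin} is in any case stated only for fixed \(A\).

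Refining the grid until a deterministic bound works would need \(Y^{1-o(1)}\) points. The per-point saving \(e^{-(\log_2Y)^2/2}=Y^{-o(1)}\) cannot pay for that in a union bound. The underlying problem is that any deterministic increment bound discards the sign cancellation. The vector \(a_{\le y_0}(x)\) is a sum of \(x^{1-o(1)}\) unit vectors, and its norm is of order the square root of that number only because of the random signs.

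What is missing is a maximal inequality that keeps this cancellation. The paper does the following:
\begin{enumerate}
\item Enumerate the squarefree \(y_0\)-smooth integers \(1<n_1<\cdots<n_K\le X\) and set \(V_j=f(n_j/P(n_j))e_{P(n_j)}\).
\item Observe that these vectors are orthonormal in \(L^2(\Omega;\ell^2)\) and that \(a_{\le y_0}(x)=\sum_{n_j\le x}V_j\).
\item Apply the Hilbert-space Menshov--Rademacher inequality: \(\E\max_{k\le K}\|\sum_{j\le k}V_j\|_2^2\le C(\log 2K)^2K\le C(\log Y)^2\sfcount(X,y_0)\).
\end{enumerate}
The loss \((\log Y)^2\) is absorbed by the Rankin factor \(e^{-(\log_2Y)^2/2}\), so no fine grid is needed. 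Markov's inequality on the \(O(\log Y)\) dyadic blocks \([B_i,2B_i]\) covering \([Y,Y^2]\) gives total failure probability \(O((\log Y)^{23}e^{-(\log_2Y)^2/2})\), which is summable over \(\log Y=2^r\).

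Alternatively, you could use \(\sup_{x\le X}\sum_{p\le y_0}a_p(x)^2\le\sum_{p\le y_0}\sup_{u\le X/p}\Psi(u,p^-)^2\) and the scalar Menshov--Rademacher inequality for each \(p\); the resulting counts sum to \(\sfcount(X,y_0)-1\). With either form of the maximal inequality your grid argument would also go through, but without one the proposal does not prove the lemma.
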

\begin{proof}
{\sc Step 1.}   By Lemma~\ref{lem:rankin}, with {\(A=1\)},
for {\(Y\le X\le2Y^2\)},

\begin{equation}\label{eq:rankinupper}
\sfcount(X,y_0)
\le CX(\log y_0)\exp\bigg(-\frac{\log X}{\log y_0}\bigg)
\le C_1Xe^{-(\log_2Y)^2/2}
\end{equation}
for  all sufficiently large   {\(Y\)}, since
{\(\log X/\log y_0\ge (\log_2Y)^2\)} and {\(\log_2y_0=O((\log_2Y))\)}.

\medskip
\noindent
{\sc Step 2.} Fix \(X\) with   {\(Y\le X\le2Y^2\)}.
Enumerate the squarefree integers \(n\) satisfying
\(1<n\le X\) and   {\(P(n)\le y_0\)} :
\(n_1<n_2<\cdots<n_K\), and for \(1\le j\le K\) put
\[
V_j=f(n_j/P(n_j))e_{P(n_j)}.
\]
We apply the Hilbert-space-valued Menshov--Rademacher maximal inequality, see, e.g.,
\cite[Lemma~1, (11)]{MM}. For distinct \(n_i,n_j\) with the same
largest prime, the scalar products have mean zero by \eqref{eq:orth};
vectors with different largest primes are orthogonal in \(\ell^2\).
Each \(V_j\) has norm one. Since the vectors \(V_1,\ldots,V_K\)
are orthonormal in \(L^2(\Omega;\ell^2)\), that inequality gives

\begin{equation}\label{eq:menshov}
\E\max_{1\le k\le K}\biggl\|\sum_{j=1}^kV_j\biggr\|_2^2
\le C(\log(2K))^2K.
\end{equation}

By the coefficient representation \eqref{eq:vector}, for every
\(1\le x\le X\),   {\(a_{\le y_0}(x)=\sum_{j:n_j\le x}V_j\)}.
Since {\(K=\sfcount(X,y_0)-1\le X\le2Y^2\)},
a combination of \eqref{eq:rankinupper} and \eqref{eq:menshov} gives
\begin{equation}\label{eq:smallmaxmean}
\begin{aligned}
\E\sup_{1\le x\le X}\|a_{\le y_0}(x)\|_2^2
&=\E\max_{1\le k\le K}\biggl\|\sum_{j=1}^kV_j\biggr\|_2^2\\
&\le C(\log Y)^2\sfcount(X,y_0)\le C(\log Y)^2Xe^{-(\log_2Y)^2/2}.
\end{aligned}
\end{equation}
The assertion is immediate when \(K=0\).

\medskip
\noindent
{\sc Step 3.} Define
{\(B_i=Y2^i\)}  for \(0\le i<J\), where
{\(J=\lceil (\log Y)/\log2\rceil=O((\log Y))\)}.
The intervals \([B_i,2B_i]\) cover   {\([Y,Y^2]\)}, and each
\(X_i:=2B_i\) satisfies   {\(Y\le X_i\le2Y^2\)}.
Thus, Markov's inequality together with \eqref{eq:smallmaxmean} yields

\[
\PP\biggl(\sup_{x\in[B_i,2B_i]}
\frac{\|a_{\le y_0}(x)\|_2^2}{x}>(\log Y)^{-20}\biggr)
\le C(\log Y)^{22}e^{-(\log_2Y)^2/2}.
\]

Taking the union over the   {\(J=O((\log Y))\)}  intervals, we find

\[
\PP\biggl(\sup_{Y\le x\le Y^2}
\frac{\|a_{\le y_0}(x)\|_2^2}{x}>(\log Y)^{-20}\biggr)
\le C(\log Y)^{23}e^{-(\log_2Y)^2/2}.
\]

For {\(Y=\exp(2^r)\)}, the right-hand side is summable over \(r\), hence
the Borel--Cantelli lemma proves \eqref{eq:smallprimes}.
\end{proof}

\begin{lemma}\label{lem:anbound}
There is a constant \(C\) such that, almost surely for all sufficiently
large   {\(Y\)} with dyadic {\(\log Y\)}  and uniformly for
{$0\le\theta\le (\log Y)^{-\alpha}$},

\begin{equation}\label{eq:anbound}
A_N(\theta)\le C(\log Y)^{1-\alpha/2}.
\end{equation}

\end{lemma}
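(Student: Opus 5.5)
The plan is a first-moment bound combined with Borel--Cantelli, after reducing the uniformity in \(\theta\) to countably many increments by dyadic chaining in \(\theta\).

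\emph{Fixed \(\theta\).} For fixed \(u,s\), the map \(b\mapsto\Psi_N(e^su,b)-\Psi_N(u,b)\) is the martingale \(\E[M(\min(e^su,N))-M(\min(u,N))\mid\FF_b]\). By Doob's \(L^2\) maximal inequality and orthogonality \eqref{eq:orth},
\[
\E\sup_{b\ge1}|\Psi_N(e^su,b)-\Psi_N(u,b)|^2
\le4\,\#\{n\text{ squarefree}:u<n\le\min(e^su,N)\}
\le4\bigl((e^s-1)u+1\bigr)\1_{\{e^{-s}\le u\le N\}}.
\]
Integrating against \(\dd u/u^2\) gives
\[
\E A_N(s)\le C(s\log N+1)\le C\bigl(s\log Y+1\bigr).
\]
For \(s\le h=(\log Y)^{-\alpha}\) this is \(O((\log Y)^{1-\alpha})\).

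\emph{Uniformity in \(\theta\) by chaining.} Put \(L=\lceil(2+\alpha)\log_2Y/\log2\rceil\). Consider dyadic points \(\theta_{\ell,k}=kh2^{-\ell}\) for \(0\le\ell\le L\) and \(0\le k\le2^\ell\). Every \(\theta\in[0,h]\) is the sum of at most one increment of length \(h2^{-\ell}\) per level, starting from \(0\), plus a remainder of length at most \(h2^{-L}\). Telescoping \(\Psi_N(e^\theta u,b)-\Psi_N(u,b)\) along this chain and applying Cauchy--Schwarz over the \(L+2\) pieces gives, for every \(b\),
\[
|\Psi_N(e^\theta u,b)-\Psi_N(u,b)|^2\le(L+2)\Bigl(\sum_{\ell\le L}\sum_{k<2^\ell}\Delta_{\ell,k}(u)+F(u)\Bigr).
\]
Here
\[
\Delta_{\ell,k}(u)=\sup_b|\Psi_N(e^{\theta_{\ell,k+1}}u,b)-\Psi_N(e^{\theta_{\ell,k}}u,b)|^2,
\]
and \(F(u)\) is the supremum over \(b\), over the final window \(k_L\), and over \(v\in[e^{\theta_{L,k}}u,e^{\theta_{L,k+1}}u]\) of \(|\Psi_N(v,b)-\Psi_N(e^{\theta_{L,k}}u,b)|^2\). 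Since the right-hand side no longer depends on \(\theta\) or \(b\), it dominates \(\sup_{\theta\le h}A_N(\theta)\) after integration.

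For each \(\ell\), substitute \(u'=e^{\theta_{\ell,k}}u\) (the Jacobian factor \(e^{-\theta}\le1\) only helps) and use the fixed-\(\theta\) bound. This gives \(\E\int\sum_k\Delta_{\ell,k}\,\dd u/u^2\le C2^\ell(h2^{-\ell}\log Y+1)\). The "\(+1\)'' terms must be handled with care: the contribution of \(u\) with \(e^{s}u<\) the next integer vanishes, so the bound is really \(C\int_{1/2}^N\min(1,h2^{-\ell}u)\cdot\dd u/u^2\cdot2^\ell\ll h\log Y\cdot\log(2^\ell/h)\). Each level therefore contributes \(O((\log Y)^{1-\alpha}\log_2Y)\).

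For the remainder \(F\), I would apply Doob's inequality in the Banach space \(\ell^\infty\) (indexed by the finitely many endpoints \(v\)) to the martingale \(b\mapsto(\E[M(v)-M(v_0)\mid\FF_b])_v\). Then the Menshov--Rademacher inequality, as in \eqref{eq:menshov}, handles the orthonormal \(f(n)\), \(n\in(v_0,e^{h2^{-L}}v_0]\). This gives \(\E F(u)\le C\log^2(2+\eta u)(\eta u+1)\) on \(u\ge1/2\), with \(\eta\asymp h2^{-L}\le(\log Y)^{-2-\alpha}\). Its integral is \(O(\eta(\log Y)^3)+O(1)\). Again the \(O(1)\) must be checked: it only arises for bounded \(u\) and costs \(O(1)\) per window hitting an integer, which is fine.

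\emph{Conclusion.} Altogether,
\[
\E\sup_{0\le\theta\le h}A_N(\theta)\le C(\log_2Y)^3(\log Y)^{1-\alpha}.
\]
Markov's inequality then gives
\[
\PP\Bigl(\sup_{\theta\le h}A_N(\theta)>(\log Y)^{1-\alpha/2}\Bigr)\le C(\log_2Y)^3(\log Y)^{-\alpha/2}.
\]
For \(Y=\exp(2^r)\) this is summable in \(r\), and Borel--Cantelli yields \eqref{eq:anbound}.

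The main obstacle is the bookkeeping of the \(O(1)\) "boundary'' contributions from small \(u\), that is, near \(u\approx1\), in both the chaining levels and the remainder. These must not be multiplied by \(2^\ell\) or by \(\log^2\) factors, so I would treat \(u\le(\log Y)^{3}\) separately by the crude deterministic bound \(|\Psi_N|\le u\).
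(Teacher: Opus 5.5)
\paragraph{The gap is in the remainder \(F\).}
For the right-hand side of your pointwise inequality to be free of \(\theta\), \(F(u)\) must be a maximum over all \(2^L\asymp(\log Y)^{2+\alpha}\) final windows \([e^{\theta_{L,k}}u,e^{\theta_{L,k+1}}u]\), \(0\le k<2^L\). The bound \(\E F(u)\le C\log^2(2+\eta u)(\eta u+1)\) that you get from Doob in \(\ell^\infty\) and Menshov--Rademacher holds only for \emph{one} window with a fixed base point \(v_0\).

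With second-moment tools, the maximum over windows can only be bounded by the sum over windows, or by twice the oscillation over all of \([u,e^hu]\). Either way the bound is \(\E F(u)\ll\log^2(2+hu)\,\#\{n:u<n\le e^hu\}\). Integrating against \(\dd u/u^2\) then gives only \(h\int_1^N\log^2(2+ht)\,\dd t/t\asymp h(\log Y)^3\), since \(\log(hN)\sim2\log Y\). This is far above the target \((\log Y)^{1-\alpha/2}\), because the Menshov--Rademacher \(\log^2\) is paid on the full count of integers.

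Shrinking \(\eta\) until \(\eta N\le1\) would make the remainder harmless, since each window then contains at most one integer. But then \(L\gg\log Y\), and every level genuinely contributes \(\asymp h\log Y\). Your Cauchy--Schwarz over \(L+2\) pieces then costs \(L^2h\log Y\asymp(\log Y)^{3-\alpha}\). So, with the supremum over \(\theta\) taken pointwise in \(u\) inside the integral, the chaining does not close.

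\paragraph{The missing idea is the dilation covariance of \(\dd u/u^2\), used \emph{before} taking the supremum over \(\theta\).}
Since \(\int\sup_b|g(e^tu,b)|^2\,\dd u/u^2=e^t\int\sup_b|g(u,b)|^2\,\dd u/u^2\), Minkowski's inequality in this norm gives \(A_N(\theta_1+\theta_2)^{1/2}\le A_N(\theta_1)^{1/2}+e^{\theta_1/2}A_N(\theta_2)^{1/2}\). Along your binary chain, each link is then either zero or a single variable \(e^{\theta'}A_N(h2^{-\ell})\le eA_N(h2^{-\ell})\), with no sum over \(k\). The remainder becomes \(e\sup_{s\le h2^{-L}}A_N(s)\), which your one-window bound does control. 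Minkowski in \(L^2(\PP)\) then sums the links geometrically and gives \(\E\sup_{\theta\le h}A_N(\theta)\ll h\log Y\) with no logarithmic loss.

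The paper uses the same covariance more economically. With \(D_s(u,b)=\Psi_N(e^su,b)-\Psi_N(u,b)\), the identity \(D_\theta(u,b)=D_t(u,b)-D_{t-\theta}(e^\theta u,b)\) gives \(A_N(\theta)\le2A_N(t)+2e^\theta A_N(t-\theta)\). Averaging over \(t\in[h,2h]\) yields \(\sup_{\theta\le h}A_N(\theta)\le Ch^{-1}\int_0^{2h}A_N(s)\,\dd s\), so no chaining and no remainder are needed.

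\paragraph{Smaller points.}
The boundary terms you call the main obstacle do not arise. By Tonelli, \(\int\1_{\{u<n\le e^su\}}\,\dd u/u^2=(e^s-1)/n\), so \(\E A_N(s)\le4(e^s-1)\sum_{n\le N}\mu^2(n)/n\) with no \(+1\). Your proposed fallback \(|\Psi_N|\le u\) on \(u\le(\log Y)^3\) would, taken literally, contribute order \((\log Y)^3\). Also, the Jacobian of \(u'=e^{\theta}u\) in \(\dd u/u^2\) is \(e^{\theta}\), not \(e^{-\theta}\); this is harmless since \(\theta\le h\).
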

\begin{proof}
{\sc Step 1.} Fix \(u>0\) and \(s\ge0\), and put
\[
H_{u,s}:=\sum_{\substack{u<n\le e^su\\n\le N}}f(n).
\]
For \(b\ge1\), put
\[
D_s(u,b)=\Psi_N(e^su,b)-\Psi_N(u,b).
\]
List the primes not exceeding \(N\) in increasing order:
\(p_1<\cdots<p_K\).  Then
\(D_s(u,p_j)=\E(H_{u,s}\mid\FF_{p_j})\) is the martingale obtained by
revealing the prime signs in \(H_{u,s}\).
Applying Doob's \(L^2\) maximal inequality
(see e.g.\ \cite[Theorem~4.4.4]{Durrett}) and \eqref{eq:orth}
we obtain
\[
\E\sup_{b\ge1}|\Psi_N(e^su,b)-\Psi_N(u,b)|^2
=\E\sup_{b\ge1}|D_s(u,b)|^2
\le4\E H_{u,s}^2
\le4\sum_{\substack{u<n\le e^su\\n\le N}}\mu^2(n).
\]

\medskip
\noindent
{\sc Step 2.} By Step~1 and Tonelli's theorem, for \(0\le s\le1\),

\begin{align*}
\E A_N(s)
&=\int_0^\infty
\E\sup_{b\ge1}|\Psi_N(e^su,b)-\Psi_N(u,b)|^2\frac{\dd u}{u^2}\\
&\le4\sum_{n\le N}\mu^2(n)
\int_0^\infty\1_{\{u<n\le e^su\}}\frac{\dd u}{u^2}
\le4(e^s-1)\sum_{n\le N}\frac{\mu^2(n)}n
\le C(\log Y)s.
\end{align*}

\medskip
\noindent
{\sc Step 3.} We next pass from fixed \(s\) to all \(0\le\theta\le h\).
Fix such a \(\theta\) and take any \(t\in[h,2h]\).
The identity
\[
D_\theta(u,b)=D_t(u,b)-D_{t-\theta}(e^\theta u,b)
\]
implies, after squaring, taking the supremum over \(b\), and substituting
\(v=e^\theta u\) in the second integral,
\[
A_N(\theta)\le2A_N(t)+2e^\theta A_N(t-\theta).
\]
The left-hand side does not depend on \(t\), so we may integrate this
inequality over \(t\in[h,2h]\) and divide by \(h\):
\begin{align*}
A_N(\theta)
\le\frac2h\int_h^{2h}A_N(t)\dd t
+\frac{2e^\theta}{h}\int_{h-\theta}^{2h-\theta}A_N(s)\dd s
\le\frac Ch\int_0^{2h}A_N(s)\dd s.
\end{align*}
Using Step~2 and Tonelli once more gives

\[
\E\sup_{0\le\theta\le h}A_N(\theta)
\le\frac Ch\int_0^{2h}(\log Y)s\dd s\le C(\log Y)h=C(\log Y)^{1-\alpha}.
\]

Consequently, Markov's inequality yields

\[
\PP\biggl(\sup_{0\le\theta\le h}A_N(\theta)>(\log Y)^{1-\alpha/2}\biggr)
\le C(\log Y)^{-\alpha/2}.
\]

This bound is summable over   {\(Y=\exp(2^r)\)}, so the Borel--Cantelli lemma completes the proof.
\end{proof}

\begin{lemma}\label{lem:rtbound}
For every deterministic \(z\),   {\(\E R_Y(z)\le\delta z+1\)}.
If {\(G_Y\subset[Y,Y^2]\)}  is deterministic with
{\(|G_Y|\le C(\log Y)^{122}\)}, then almost surely for all sufficiently
large   {\(Y=\exp(2^r)\)},

\begin{equation}\label{eq:rtbound}
\max_{z\in G_Y}\frac{R_Y(z)}z\le (\log Y)^{-\alpha/2}.
\end{equation}

\end{lemma}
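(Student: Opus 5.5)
We first prove the expectation bound \(\E R_Y(z)\le\delta z+1\) for a fixed deterministic \(z\), and then deduce \eqref{eq:rtbound} from it by Markov's inequality, a union bound over \(G_Y\), and the Borel--Cantelli lemma.

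\emph{Expectation bound.} Fix a prime \(p>y_0\) and \(t\in[p,(1+\delta)p]\). Since \(z/t\le z/p\), the difference
\[
\Psi_N(z/p,p^-)-\Psi_N(z/t,p^-)
\]
is the sum of \(f(n)\) over those \(n\le N\) with \(z/t<n\le z/p\) and \(P(n)<p\). By the orthogonality relation \eqref{eq:orth}, its second moment equals the number of squarefree such \(n\). This is at most the number of integers in the half-open interval \((z/t,z/p]\), which is bounded by \(z/p-z/t+1\). Since \(t\le(1+\delta)p\),
\[
\frac zp-\frac zt\le\frac zp\Bigl(1-\frac1{1+\delta}\Bigr)\le\frac{\delta z}{p}.
\]
The interval is empty unless \(z/p\ge1\), so only primes \(p\le z\) contribute.

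The \(+1\) term is wasteful, and we refine it as follows. The interval \((z/t,z/p]\) contains an integer \(n\) only if \(p\in[z/n,(1+\delta)z/n)\). Hence, for each \(n\), at most \(O(1+\delta z/n)\) primes \(p\) satisfy this condition.

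We now apply Tonelli's theorem and swap the order of summation. Each squarefree \(n\le z/y_0\) can be charged to the primes \(p\) in \([z/n,(1+\delta)z/n)\). For each such \(p\), the weight is \(\frac1{\delta p}\int_p^{(1+\delta)p}\1_{\{n>z/t\}}\dd t\le 1\). Moreover, for a given \(n\) and \(p\) this weight is of size \(\le \min(1,(p(1+\delta)-z/n)/(\delta p))\). Summing over \(p\) therefore gives the contribution \(\sum_{p}\) of the length fraction.

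Equivalently, we have
\[
\E R_Y(z)\le\sum_{n}\mu^2(n)\sum_{p>y_0}\frac{1}{\delta p}\bigl|\{t\in[p,(1+\delta)p]:\,z/t<n\le z/p\}\bigr|.
\]
For fixed \(n\), the set of \(t\) is nonempty only when \(p\) lies in a window of relative width \(\delta\) near \(z/n\). The inner sum over \(p\) is then at most \(\sum_{z/((1+\delta)n)<p\le z/n}1\), since the measure of the \(t\)-set is at most \(\delta p\).

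Summing over \(n\), the total is the number of pairs \((n,p)\) with \(z/((1+\delta)n)<p\le z/n\). This is at most the number of integers \(m=np\) in \((z/(1+\delta),z]\), up to multiplicity from the factorizations of \(m\). Since \(P(n)<p\), the decomposition \(m=np\) has \(p=P(m)\), so each \(m\) is counted at most once. The number of such \(m\) is at most \(z-z/(1+\delta)+1\le\delta z+1\). This gives \(\E R_Y(z)\le\delta z+1\).

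\emph{Almost sure bound.} For \(z\in G_Y\subset[Y,Y^2]\), Markov's inequality gives
\[
\PP\bigl(R_Y(z)/z>(\log Y)^{-\alpha/2}\bigr)\le(\delta+1/z)(\log Y)^{\alpha/2}\le 2(\log Y)^{-250+\alpha/2}.
\]
Here we used \(\delta=(\log Y)^{-250}\) and \(1/z\le 1/Y\), which is negligible. A union bound over the at most \(C(\log Y)^{122}\) points of \(G_Y\) bounds the failure probability by \(C(\log Y)^{-127}\). For \(Y=\exp(2^r)\) this equals \(C2^{-127r}\), which is summable over \(r\), so the Borel--Cantelli lemma yields \eqref{eq:rtbound}.

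The main obstacle is the expectation bound, in particular counting each \(m\) at most once. This rests on the constraint \(P(n)<p\), which forces \(p=P(np)\), so that the pair \((n,p)\) is determined by \(m=np\).
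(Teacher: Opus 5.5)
Your proof is correct and follows the paper's own route: orthogonality, then Tonelli to reduce to counting pairs $(p,n)$ with $z/(1+\delta)<pn\le z$ and averaging weight at most one, injectivity via $p=P(pn)$, and finally Markov's inequality, a union bound over $G_Y$ and Borel--Cantelli along $Y=\exp(2^r)$. The only fix needed is to keep the condition $P(n)<p$ in your displayed bound for $\E R_Y(z)$, since your at-most-once counting step relies on it; the paragraph on the ``wasteful'' $+1$ and the $\min(1,\cdot)$ weight can be dropped.
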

\begin{proof}
By orthogonality \eqref{eq:orth}, the expected value of the square in
\eqref{eq:rt} is the number of squarefree \(m\) with
\(z/t<m\le z/p\) and \(P(m)<p\).
Enlarge the range of \(m\) and count the pairs \((p,m)\) satisfying

\[
\frac z{1+\delta}<pm\le z,\qquad p>y_0,\quad P(m)<p.
\]

For a given pair \((p,m)\), the averaging weight of the values of \(t\)
for which \(z/t<m\le z/p\) is at most one.
Each squarefree integer \(pm\) is counted at most once, since
\(p=P(pm)\). There are at most \(\delta z+1\) integers in this
interval, so   {\(\E R_Y(z)\le\delta z+1\)}.
Markov's inequality and a union bound give

\[
\PP\biggl(\max_{z\in G_Y}\frac{R_Y(z)}z>(\log Y)^{-\alpha/2}\biggr)
\le C(\log Y)^{122+\alpha/2}((\log Y)^{-250}+e^{-(\log Y)}).
\]

The last bound is summable over   {\(Y=\exp(2^r)\)}, hence the Borel--Cantelli
lemma completes the proof.
\end{proof}

The next estimate follows from the hypercontractive and
maximal inequalities in \cite[Lemma~3.5, Step~2, and Lemma~3.4]{BIRestricted}.
We record the application to the full coefficient vector.
\begin{lemma}\label{lem:fourth}
For sufficiently large \(u\) and \(v\) such that \(u+1\le v\le3u/2\),
\begin{equation}\label{eq:fourth}
\E\sup_{u\le x\le v}\bigl(|M(x)-M(u)|+d(u,x)\bigr)^4
\le Cu^{2/3}(v-u)^{4/3}(\log u)^{24}.
\end{equation}
\end{lemma}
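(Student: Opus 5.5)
The plan is to split the quantity inside the supremum and treat the two pieces by the same chaining argument. Write
\[
|M(x)-M(u)|+d(u,x)\le\Bigl|\sum_{u<n\le x}f(n)\Bigr|+\Bigl\|\sum_{\substack{u<n\le x\\ n\ \mathrm{squarefree}}}f(n/P(n))\,e_{P(n)}\Bigr\|_2 ,
\]
using \eqref{eq:vector}. Both are increments of processes indexed by \(x\) whose steps are the vectors \(V_n=f(n)\) (scalar) and \(V_n=f(n/P(n))e_{P(n)}\) (in \(\ell^2\)), for \(n\in(u,v]\) squarefree. Since \((A+B)^4\le8(A^4+B^4)\), it suffices to bound \(\E\sup_{u\le x\le v}\|\sum_{u<n\le x}V_n\|^4\) in each case by \(Cu^{2/3}(v-u)^{4/3}(\log u)^{24}\).

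The key input is a fourth-moment bound for a fixed interval. For \(u\le x<x'\le v\), I would show
\[
\E\Bigl\|\sum_{x<n\le x'}V_n\Bigr\|^4\le C\,(x'-x)^{4/3}u^{2/3}(\log u)^{C}.
\]
For the scalar sum this is what the hypercontractive estimate of \cite[Lemma~3.5, Step~2]{BIRestricted} provides. Expanding the fourth moment, one counts quadruples \(n_1n_2=n_3n_4\) (up to squares) in a short interval, and a divisor-type count yields the \(u^{2/3}\) loss.

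For the \(\ell^2\)-valued sum, \(\|\cdot\|_2^4=(\sum_p c_p^2)^2\) with \(c_p=\sum_{m}f(m)\), where \(m\) runs over integers with \(pm\in(x,x']\) and \(P(m)<p\). I would bound \(\E(\sum_p c_p^2)^2\) by expanding into quadruples \((p,m_1,m_2)\), \((q,m_3,m_4)\) with \(m_1m_2m_3m_4\) a square. This is the same type of quadruple count, since \(pm_i\in(x,x']\) forces \(m_i\) into a short interval of length \((x'-x)/p\). The same counting as in \cite{BIRestricted} should therefore give the same bound.

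The supremum is handled by a dyadic (Móricz-type) chaining, as in \cite[Lemma~3.4]{BIRestricted}. The scalar moment bound has the superadditive form \(g(I)^{4/3}\) with \(g\) the length of the interval, up to the factor \(u^{2/3}(\log u)^C\). The Móricz maximal inequality for exponent \(4/3>1\) then gives the maximal bound with only a constant loss, or a \(\log\) power if done by crude dyadic splitting over \(O(\log u)\) levels. This is where \((\log u)^{24}\) arises. The discreteness of \(x\) is harmless, because both processes change only at integers.

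The main obstacle is the fourth-moment count for the vector part. Cross terms between different primes \(p\ne q\) involve \(\E[f(m_1)f(m_2)f(m_3)f(m_4)]\) with no constraint tying \(p\) to \(q\), so I must check that summing over \(p,q\) does not lose more than a logarithm. I would first try to dominate it via Cauchy--Schwarz by \((\sum_p(\E c_p^4)^{1/2})^2\) and use the scalar hypercontractive bound on each \(c_p\). If that loses too much for small \(p\), I would handle \(p\le u^{1/3}\) by the direct quadruple count instead.
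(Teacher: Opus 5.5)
Your proposal is correct and is essentially the paper's proof. It uses scalar hypercontractivity; coordinatewise hypercontractivity combined across primes by Minkowski, which is your Cauchy--Schwarz step; and the M\'oricz-type maximal inequality of \cite[Lemma~3.4]{BIRestricted}. The Cauchy--Schwarz step loses nothing: summing the coordinate bounds $\sum_m\mu^2(m)3^{\omega(m)}$ over $p$ reassembles $\sum_{x<n\le x'}\mu^2(n)3^{\omega(n)}$, so no fallback for small $p$ is needed.

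The only difference is the order of the last two steps. The paper applies the maximal inequality with exponent $2$ to the additive weight $W=\sum_{u<n\le v}\mu^2(n)3^{\omega(n)}$. Only then does it use H\"older, $W\le(2(v-u))^{2/3}\bigl(\sum_{n\le v}\mu^2(n)27^{\omega(n)}\bigr)^{1/3}$. This H\"older step, not a divisor count or the chaining, produces both the factor $u^{2/3}$ and the power $(\log u)^{52/3}$. That exponent stays below $24$ even after a crude dyadic-chaining loss of $(\log u)^4$.
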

\begin{proof}
For squarefree \(n>1\), put
\(V_n=f(n/P(n))e_{P(n)}\). The scalar hypercontractive bound
\cite[(3.17)]{BIRestricted}, see also
\cite[Section~9.2]{ODonnell}, gives
\[
\bigg(\E\bigg|\sum_n c_nf(n)\bigg|^4\bigg)^{1/2}
\le\sum_n3^{\omega(n)}c_n^2.
\]
For the vector sum, apply this inequality to each coordinate
and then use Minkowski's inequality, as in
\cite[(3.18)--(3.19)]{BIRestricted}:
\[
\bigg(\E\bigg\|\sum_n c_nV_n\bigg\|_2^4\bigg)^{1/2}
\le\sum_p\bigg(\E\bigg|
\sum_{n:P(n)=p}c_nf(n/p)\bigg|^4\bigg)^{1/2}
\le\sum_n3^{\omega(n)}c_n^2.
\]
This calculation does not require the restriction
\(n<P(n)^2\) used for the restricted sum in that paper.
List the squarefree integers in \((u,v]\) in increasing order,
and put \(W=\sum_{u<n\le v}\mu^2(n)3^{\omega(n)}\).
The maximal inequality \cite[Lemma~3.4, (3.12)]{BIRestricted}
applies to both families \((f(n))\) and \((V_n)\). Therefore,
\[
\E\sup_{u\le x\le v}
\bigl(|M(x)-M(u)|+d(u,x)\bigr)^4
\le C W^2.
\]

By \cite[(3.21)]{BIRestricted}, applied with \(k=27\),
\[
 \sum_{n\le v}\mu^2(n)27^{\omega(n)}
 \le C v(1+\log v)^{26}.
\]
Hence H\"older's inequality gives
\[
 W \le (2(v-u))^{2/3}
 \bigg(\sum_{n\le v}\mu^2(n)27^{\omega(n)}\bigg)^{1/3}\le C u^{1/3}(v-u)^{2/3}(\log u)^{26/3}.
\]
Combining this with the preceding maximal bound and noting that \(52/3<24\) proves the assertion.
\end{proof}

\begin{proof}[Proof of Proposition~\ref{prop:interpolation}]
Introduce the following finer deterministic grid on the block
{\([Y,Y^2]\)}:

\begin{equation}\label{eq:fine}
\cD_Y=
\{Ye^{j(\log Y)^{-120}}:0\le j\le\lfloor(\log Y)^{121}\rfloor\}
\cup\{Y^2\}.
\end{equation}

\medskip
\noindent
{\sc Step 1.} We first compare the values of \(M\) and \(Q\) at arbitrary
points with values at the finer grid.
For \([u,v]\) equal to a cell between consecutive points of   {\(\cD_Y\)},
apply Lemma~\ref{lem:fourth} on \([u,\max\{v,u+1\}]\)
and use Markov's inequality to obtain

\[
\PP\biggl(\sup_{u\le x\le v}(|M(x)-M(u)|+d(u,x))
>\sqrt u\,(\log Y)^{-2}\biggr)\le C(\log Y)^{-128}.
\]

There are \(O((\log Y)^{121})\) cells, so the probability that the bound fails in any cell is   {\(O((\log Y)^{-7})\)}, which is summable over the integers \(r\).
By Borel--Cantelli, almost surely for all sufficiently large   {\(Y\)} with dyadic {\(\log Y\)},

\begin{equation}\label{eq:fineerror}
\sup_{u\le x\le v}(|M(x)-M(u)|+d(u,x))\le\sqrt u\,(\log Y)^{-2}
\end{equation}
simultaneously in all cells.

\medskip
\noindent
{\sc Step 2.} We now pass from the grid   {\(\cD_Y\)} to {\(\cC_Y\)}.
For {\(y\in\cD_Y\)}, let {\(c=c_Y(y)\)}  be the largest point of
{\(\cC_Y\)}  that does not exceed \(y\). Then \(\log(y/c)\le h\).
Apply Lemma~\ref{lem:smoothingupper} together with Lemmas~\ref{lem:smallprimes}, \ref{lem:anbound} and \ref{lem:rtbound} to the deterministic set   {\(G_Y=\cC_Y\cup\cD_Y\)},
of size   {\(O((\log Y)^{121})\)}, almost surely, for all sufficiently large   {\(Y\)},
simultaneously for all such pairs \((c,y)\), we have

\begin{equation}\label{eq:gridmetric}
\frac{d(c,y)^2}{c}\le C(\log Y)^{-20}+C(\log_2Y)^2(\log Y)^{-\alpha/2}
+C(\log Y)^{-\alpha/2}\le (\log Y)^{-\alpha/3}.
\end{equation}

Put
\[
M(y)-M(c)=\sum_p\varepsilon_p(a_p(y)-a_p(c)),\qquad
d(c,y)^2=\sum_p(a_p(y)-a_p(c))^2.
\]
 With \(b_p=a_p(y)-a_p(c)\),
Lemma~\ref{lem:exponential} applies with
{\(t=\sqrt c\,(\log Y)^{-\alpha/12}\)}, {\(v=c(\log Y)^{-\alpha/3}\)}.
Thus, defining
\[
F_{(\log Y),y}=\biggl\{|M(y)-M(c_Y(y))|>\sqrt{c_Y(y)}(\log Y)^{-\alpha/12},\
d(c_Y(y),y)^2\le c_Y(y)(\log Y)^{-\alpha/3}\biggr\},
\]
Lemma~\ref{lem:exponential} gives
\[\PP(F_{(\log Y),y})\le2e^{-(\log Y)^{\alpha/6}/2}.\]
For each   {\((Y,y)\)}, the pair   {\((c_Y(y),y)\)}  is deterministic;
the random bound on   {\(d(c_Y(y),y)\)}  is used only after the following
union bound and Borel--Cantelli step.
Taking a union bound over the deterministic fine grid,

\[
\PP\biggl(\bigcup_{y\in\cD_Y}F_{(\log Y),y}\biggr)
\le C(\log Y)^{121}e^{-(\log Y)^{\alpha/6}/2}.
\]

The last bound is summable over   {\(Y\)} with dyadic {\(\log Y\)}.
By the Borel--Cantelli lemma and \eqref{eq:gridmetric},
almost surely, for all sufficiently large \(r\), simultaneously
for every \(y\in\cD_Y\),

\begin{equation}\label{eq:coarsefine}
|M(y)-M(c_Y(y))|\le\sqrt{c_Y(y)}(\log Y)^{-\alpha/12},\qquad
d(c_Y(y),y)\le\sqrt{c_Y(y)}(\log Y)^{-\alpha/6}.
\end{equation}

\noindent
{\sc Step 3.} Finally, let   {\(x\in[Y,Y^2]\)}  be arbitrary.
Let \(y\) be the largest point of   {\(\cD_Y\)}  with \(y\le x\), and put
{\(z=c_Y(y)\)}. Consecutive points of \(\cD_Y\) differ by at most
\((\log Y)^{-120}\) on the logarithmic scale. Hence   {\(\log(x/y)\le (\log Y)^{-120}\)}.
By the definition of   {\(c_Y(y)\)}  and the spacing of   {\(\cC_Y\)},
{\(\log(y/z)\le h=(\log Y)^{-\alpha}\)}. Hence {\(\log(x/z)\le2(\log Y)^{-\alpha}\)}.
Combining \eqref{eq:fineerror} with \eqref{eq:coarsefine}
we obtain, almost surely, for large   {\(Y\)},

\[
|M(x)-M(z)|+d(z,x)\le3\sqrt x\,(\log Y)^{-\alpha/12}.
\]

This completes the proof.
\end{proof}

\section{Auxiliary estimates for the lower bounds}\label{app:lower}

\subsection{Proposition \ref{prop:flatness}: variation of the layer variances}

We recall the notation used in the following proposition.
The endpoint grid is
\[
\mathcal X_Y=
\bigg\{
Ye^r:
0\le r\le
\bigg\lfloor
\frac{\log Y}{(\log_2Y)^{100}}
\bigg\rfloor
\bigg\}.
\]
Put \(\delta=(\log_2Y)^{-1/20}\) and
\[
y_j=\exp\bigg(
\frac{e^{j\delta}\log Y}{(\log_2Y)^2}
\bigg),\qquad j\ge0.
\]
Let \(K\) be the least positive integer such that
\(y_K\ge Y^{1+(\log_2Y)^{-100}}\), and write
\(I_j=(y_{j-1},y_j]\) for the \(j\)th prime layer.
In particular, \(y_K\ge x\) for every \(x\in\mathcal X_Y\).

For \(p\in I_j\), the retained and discarded coefficients are
\[
b_p(x)=\Psi(x/p,y_{j-1}),
\qquad
d_p(x)=\Psi(x/p,p^-)-\Psi(x/p,y_{j-1}).
\]
The quantities appearing below are
\[
V_j(x)=\sum_{p\in I_j}b_p(x)^2,
\qquad
D(x)=\sum_{j=1}^K\sum_{p\in I_j}d_p(x)^2.
\]
Thus \(V_j(x)\) is the conditional variance of the retained
layer sum \(R_j(x)\), given \(\FF_{y_{j-1}}\), whereas
\(D(x)\) is the predictable quadratic variation of the
discarded sum. The following proposition controls their
variation after division by \(x\), uniformly over the
endpoint grid.

\begin{proposition}
\label{prop:flatness}
With probability \(1-o(1)\) as \(Y\to\infty\), the following
bounds hold simultaneously:

\begin{align}
\sum_{j=1}^K\max_{x\in\mathcal X_Y}
\biggl|
\frac{V_j(x)}x-\frac{V_j(Y)}Y
\biggr|
&\le(\log_2Y)^{-10},
\label{eq:flatV}\\
\max_{x\in\mathcal X_Y}
\biggl|
\frac{D(x)}x-\frac{D(Y)}Y
\biggr|
&\le(\log_2Y)^{-10},
\label{eq:flatD}\\
\max_{x\in\mathcal X_Y}\frac{D(x)}x
&\le2(\log_2Y)^{-53/100}.
\label{eq:uniformcollision}
\end{align}

\end{proposition}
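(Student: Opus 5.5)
The plan is to prove all three bounds by second-moment (Markov) estimates and a union bound over the grid, which has at most \(\log Y\) points, and the \(K=O((\log_2Y)^{1/20}\log_3Y)\) layers. The key fact is that every endpoint \(x\in\mathcal X_Y\) satisfies \(\log(x/Y)\le\log Y/(\log_2Y)^{100}\). This change is tiny compared with the logarithmic width of each layer, which is at least \(\delta\log y_0=(\log_2Y)^{-1/20}\log Y/(\log_2Y)^2\).

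For \eqref{eq:uniformcollision} I will use expectations only. By orthogonality \eqref{eq:orth}, \(\E D(x)\) counts the squarefree \(n=pm\le x\) whose two largest prime factors \(p>q=P(m)\) lie in the same layer \(I_j\), with \(j\ge1\). For fixed \(j\) I bound this count by
\[
\sum_{q<p\in I_j}\frac{x}{pq}\ll x\Bigl(\sum_{p\in I_j}\frac1p\Bigr)^2 .
\]
The inner sum is \(\log(\log y_j/\log y_{j-1})+o(1)=\delta+o(1)\) by Lemma~\ref{lem:prime-estimates}; its error term is uniform in \(j\) since \(y_0\) is huge. Summing over the \(K\) layers gives \(\E D(x)/x\ll K\delta^2\ll(\log_2Y)^{-1/20}\log_3Y\). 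This is far weaker than \((\log_2Y)^{-53/100}\).

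So the crude count is not enough, and I will refine it with a sieve-type bound. The contribution of integers \(n=pqr\le x\) with \(P(r)<q\) and \(p,q\in I_j\) is \(x\sum_{p,q}\frac{1}{pq}\rho(x/(pq),q)\), where \(\rho(v,q)\) is the density of squarefree \(q\)-smooth integers up to \(v\). Since \(\log x/\log y_j\) ranges up to about \((\log_2Y)^2\), \(\rho\) is small unless \(p,q\) are close to \(x\). For layers with \(\log y_j\le \log x/ u\), I will use Lemma~\ref{lem:rankin} with \(A\) of order \(u\), which gives a factor \(e^{-u}\). Effectively only the \(O(1/\delta)\) top layers contribute, each about \(\delta^2\), for a total of \(O(\delta)=O((\log_2Y)^{-1/20})\). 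This is still larger than \((\log_2Y)^{-53/100}\).

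I therefore suspect the intended argument is different, and I would commit to the following. Bound \(\E\max_x D(x)/x\) through \eqref{eq:flatD} together with \(\E D(Y)/Y\), and compute \(\E D(Y)/Y\) via the Dickman density: the same-layer mass is \(\int\!\!\int_{\text{same layer}}\rho(\cdots)\), which is of order \(\delta\), up to the squarefree constant. This suggests that the exponent \(53/100\) requires the layers to be thinner than I computed. I will recheck the layer count, and if the bound genuinely comes out as \(O(\delta\cdot\text{polylog})\), I will adjust by taking the precise Dickman computation. This mismatch is where I expect the main obstacle.

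For \eqref{eq:flatV} and \eqref{eq:flatD} I will write \(V_j(x)/x-V_j(Y)/Y\) as a quadratic form in the \(f(m)\), \(m\) \(y_{j-1}\)-smooth, and compute its second moment. The \(f\)-orthogonality makes the diagonal terms counts of squarefree integers in \((Y/p,x/p]\). Their normalized difference is \(O(\log(x/Y)\cdot\text{density})\), giving a mean discrepancy of \(O((\log_2Y)^{-100}\log Y/\log y_{j-1})\le(\log_2Y)^{-97}\). For the fluctuation part I will use a fourth-moment, hypercontractive bound as in Lemma~\ref{lem:fourth} applied to \(\sum_p(\Psi(x/p)^2-\Psi(Y/p)^2)\). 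Alternatively, I can write \(b_p(x)^2-b_p(Y)^2=(b_p(x)-b_p(Y))(b_p(x)+b_p(Y))\) and use Cauchy--Schwarz with \(\E\sum_p(b_p(x)-b_p(Y))^2\ll x(\log_2Y)^{-97}\). Then Markov's inequality and a union bound over \(\log Y\) points and \(K\) layers give \eqref{eq:flatV}; the same route gives \eqref{eq:flatD}.
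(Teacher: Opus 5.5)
Your plan has two genuine gaps, one for each kind of bound.

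\textbf{The bound \eqref{eq:uniformcollision}.} The obstruction you found is real and is not a miscount. The mean $\E D(x)/x$ genuinely has order $\delta=(\log_2Y)^{-1/20}$, up to $\log_3Y$. The layers are fixed by the construction, and no Dickman refinement changes a first moment. So \eqref{eq:uniformcollision} asserts that $D$ is typically smaller than its mean by a factor $(\log_2Y)^{-1/2+o(1)}$; this is Harper's better-than-squareroot cancellation. No Markov bound on $\E D$ can give it.

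The missing idea (Proposition~\ref{prop:collision}) is to work at $x=Y$ and condition on $\FF_y$ with $y=Y^{1/(\log_2Y)^4}$. Consider same-layer pairs $q<p$ with cofactor $w=Y/(pq)\ge U_0=y^{\log_2Y}$. For these, Lemma~\ref{lem:conditionalsmooth} (split into $y$-rough and $y$-smooth parts and sieve the rough part) gives $w^{-1}\E[\Psi(w,q^-)^2\mid\FF_y]\le C\mathcal E_y/\log y+E$. Here $\mathcal E_y=\int_1^\infty\Psi(u,y)^2u^{-2}\dd u$, and $E\ge0$ has mean $O((\log Y)^{-1})$. Lemma~\ref{lem:energyupper} (Plancherel with the Euler product $F_y$, plus barrier events) shows that $\mathcal E_y/\log y\le(\log_2y)^{-1/2+a}$ with probability $1-o(1)$, even though $\E\mathcal E_y\asymp\log y$. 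Multiplying by the same-layer mass $\sum_j(\sum_{p\in I_j}1/p)^2\ll K\delta^2$ gives a conditional mean $\ll Y(\log_2Y)^{-109/200}\log_3Y$ on a likely $\FF_y$-measurable event. Conditional Markov then finishes, and this is why $53/100$ sits just below $1/2+1/20$. Pairs with smaller cofactor have $pq>Y/U_0$, and a plain first moment $\ll Y(\log_2Y)^{-3}$ suffices for them. Your reduction from $Y$ to all of $\mathcal X_Y$ via \eqref{eq:flatD} is then exactly the paper's.

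\textbf{The bounds \eqref{eq:flatV} and \eqref{eq:flatD}.} Here your route fails because you compare $b_p(x)$ with $b_p(Y)$ at the same prime. Although $\log(x/Y)$ is small compared with $\log y_0$, the ratio $x/Y$ itself is huge. Every grid point other than $Y$ has $x\ge eY$, and the grid reaches $Y\exp(\log Y/(\log_2Y)^{100})$. At a fixed prime, $b_p(x)-b_p(Y)$ is the sum of $f(n)$ over $y_{j-1}$-smooth $n\in(Y/p,x/p]$, and its variance is comparable to that of $b_p(x)$. Hence $\E\sum_{p\in I_j}(b_p(x)-b_p(Y))^2$ is comparable to $\E V_j(x)$, which is of order $x\delta$ for the top layers, not $O(x(\log_2Y)^{-97})$. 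The same objection defeats the hypercontractive bound at fixed $p$. Only the means are smooth in $x$ at fixed $p$.

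Since $V_j(x)/x$ is not concentrated (its size is governed by multiplicative chaos, as above), its near-invariance must come from structure. Both $V_j(x)/x$ and $V_j(Y)/Y$ sample the same random function $u\mapsto\Psi(u,y_{j-1})^2/u$, at $u=x/p$ and $u=Y/p$ respectively, over nearly the same $u$-range. So the prime $p$ at $x$ must be matched with $pY/x$ at $Y$, which is not a prime. The paper therefore smooths each prime over $[p,(1+s_0)p]$ with $s_0=(\log Y)^{-40}$, paying a second-moment cost. It then replaces the prime weights by $\dd t/\log t$ via Lemma~\ref{lem:prime-averaging}. After substituting $u=x/t$ it obtains $\int_{x/y_j}^{x/y_{j-1}}\Psi(u,y_{j-1})^2\,\frac{\dd u}{u^2\log(x/u)}$, in which $x$ enters only through the limits and the weight. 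Your key fact then yields variation $O((\log_2Y)^{-96})$ per layer. For $D$ one must also handle the $x$-dependent cutoff $\Psi(u,x/u)$, using \eqref{eq:incrementprime}, and the values of $u$ for which $x/u$ crosses a layer boundary.

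Finally, a union bound over the $\asymp\log Y/(\log_2Y)^{100}$ grid points is incompatible with errors that are only powers of $\log_2Y$. The paper instead puts $\sup_{Y\le x\le Y^{1+(\log_2Y)^{-100}}}$ inside the expectation for these integrals. It sums over grid points only the smoothing errors, which are $O((\log Y)^{-19})$.
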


Our first task is to control \(D(Y)\), the discarded energy
at the left endpoint of the block.

\begin{proposition}\label{prop:collision} We have

\begin{equation}\label{eq:collision}
\PP\big(
D(Y)>Y(\log_2Y)^{-53/100}
\big)\longrightarrow0
\qquad \text{as\ } Y\to\infty.
\end{equation}
\end{proposition}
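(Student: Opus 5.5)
The plan is to bound \(\E D(Y)\) directly, using orthogonality, and then apply Markov's inequality. This requires \(\E D(Y)=o\bigl(Y(\log_2Y)^{-53/100}\bigr)\).

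By \eqref{eq:orth}, for \(p\in I_j\),
\[
\E d_p(Y)^2=\#\{m\le Y/p:\ m \text{ squarefree},\ y_{j-1}<P(m)<p\}.
\]
Summing over \(p\) and writing \(n=pm\), the quantity \(\E D(Y)\) counts squarefree \(n\le Y\) whose two largest prime factors \(q<p\) lie in the same layer \(I_j\). Thus
\[
\E D(Y)\le\sum_{j=1}^K\ \sum_{\substack{q<p\\ q,p\in I_j}}\ \#\{k\le Y/(pq)\}
\le Y\sum_{j=1}^K\Bigl(\sum_{p\in I_j}\frac1p\Bigr)^2 .
\]
Here I drop the constraint \(P(k)<q\) and bound the count of \(k\) by \(Y/(pq)\). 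The boundary term from \(\lfloor\cdot\rfloor\) is harmless, since only pairs with \(pq\le Y\) contribute and there are \(O(Y)\) of those.

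The layers are geometric in \(\log y\), with ratio \(e^\delta\). By Lemma~\ref{lem:prime-estimates} (Mertens with the PNT error), \(\sum_{p\in I_j}1/p=\log(\log y_j/\log y_{j-1})+O(e^{-c\sqrt{\log y_{j-1}}})=\delta+o(\delta)\), because \(\log y_{j-1}\ge\log Y/(\log_2Y)^2\). Only layers with \(y_{j-1}\le Y\) contribute, and their number is \(K\). By \eqref{eq:layercount}, \(K=O(\delta^{-1}\log_3Y)\). Consequently
\[
\E D(Y)\le CY K\delta^2\le CY\delta\log_3Y=CY(\log_2Y)^{-1/20}\log_3Y .
\]

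This bound is too weak: \((\log_2Y)^{-1/20}\) is much larger than \((\log_2Y)^{-53/100}\). So the main obstacle is the crude count of the cofactor \(k\). The restriction \(P(k)<q\) with \(q\le y_j\) and \(pq\) in the same layer must be exploited. The point is that \(n\le Y\) has \(P(n)\le y_j\), and \(\log Y/\log y_j=u_j\). For all but the last few layers, \(u_j\) is large and the smooth-number density \(\rho(u_j)\) is tiny. More precisely, I would use Lemma~\ref{lem:rankin} (Rankin, with the squarefree restriction) to count \(k\le Y/(pq)\) with \(P(k)\le y_j\):
\[
\#\{k\}\ll \frac{Y}{pq}(\log y_j)\exp\Bigl(-A\frac{\log(Y/pq)}{\log y_j}\Bigr).
\]
This is negligible unless \(\log y_j\ge\log Y/(A'\log_2Y)\) with a moderate constant. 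Summing as above, the relevant layers are then those with \(y_j\) between \(Y^{1/(C\log_3Y)}\) and \(Y\), and there are only \(O(\delta^{-1}\log_3 Y)\)—still too many.

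So instead I would keep the true smooth-number density \(\rho(u)\approx u^{-u}\) and use \(\E D(Y)\approx Y\sum_j\delta^2 \rho(u_j)\). Since \(\sum_j\delta\rho(u_j)\approx\int \rho(u)\,du/u=O(1)\), this gives \(\E D(Y)\ll Y\delta=Y(\log_2Y)^{-1/20}\). I therefore expect the intended argument to need more than the expectation alone. It must either use a second-moment or conditional argument, or the actual layer width must be thinner. Taking the paper's exponents at face value, I would try to prove \(\E D(Y)\ll Y\delta\,\cdot\) (density factor) and check whether the Rankin gain yields the extra \((\log_2Y)^{-1/2}\). This step, comparing \(\sum_j\delta^2\rho(u_j)\) against \((\log_2Y)^{-53/100}\), is where I expect the difficulty. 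If it closes, Markov's inequality finishes the proof immediately.
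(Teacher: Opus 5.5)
Your first-moment computation is correct, but the proposal stops exactly where the proof has to begin, and the remedies you suggest cannot work. The bound $\E D(Y)\ll Y\delta$ is sharp. Consider the $\asymp1/\delta$ layers with $Y^{0.35}\le y_j\le Y^{1/2}$. There every pair $q<p$ in $I_j$ has $pq\le Y$, and every squarefree $k\le Y/(pq)<y_{j-1}$ automatically satisfies $P(k)<q$. So each such layer contributes $\gg Y\delta^2$. Hence no sharper deterministic count (Rankin, Dickman's $\rho$) can supply the missing factor $(\log_2Y)^{-1/2}$. Indeed, $\E D(Y)^r\ge(\E D(Y))^r$ for $r\ge1$ rules out Markov or Chebyshev applied to any moment of $D(Y)$ of order $\ge1$, including the second-moment route you mention. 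Thinner layers are not available either. A first-moment proof would need $\delta=o((\log_2Y)^{-53/100})$, hence $K\gg(\log_2Y)^{53/100}$. The inactive-layer bound \eqref{eq:blockproof-inactive}, however, requires $K=o((\log_2Y)^{6/100})$. The proposition holds only because $D(Y)$ is typically much smaller than its mean. This is the same phenomenon as $\E|M(x)|\asymp\sqrt x(\log_2x)^{-1/4}$ versus $\E M(x)^2\asymp x$.

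The missing idea is to condition on $\FF_y$, with $y=Y^{1/(\log_2Y)^4}$, and to use a high-probability bound, not a mean bound, for a random Euler product. Write $d_p(Y)=\sum_{q\in I_j,\,q<p}\varepsilon_q\Psi(Y/(pq),q^-)$. Split the terms according to whether the cofactor range $w=Y/(pq)$ is at least $U_0=y^{\log_2Y}$. For $w\ge U_0$, three ingredients give Lemma~\ref{lem:conditionalsmooth}: factor the cofactor into its $y$-rough part $a$ and its $y$-smooth part, apply a sieve bound for $y$-rough integers in the intervals $(w/(r+1),w/r]$, and apply Rankin's bound to the smooth cofactors when $a\le w^{3/4}$. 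The lemma reads
\[
\frac1w\E\bigl[\Psi(w,q^-)^2\mid\FF_y\bigr]\le C\frac{\mathcal E_y}{\log y}+E_w,
\qquad
\mathcal E_y=\int_1^\infty\frac{\Psi(u,y)^2}{u^2}\dd u,
\qquad
\E E_w\le Ce^{-\log w/\log y}.
\]
By Plancherel, $\mathcal E_y=\frac1{2\pi}\int_{\RR}|F_y(t)|^2(1/4+t^2)^{-1}\dd t$, where $F_y(t)=\prod_{p\le y}(1+\varepsilon_pp^{-1/2-it})$. Although $\E\mathcal E_y\asymp\log y$, barrier events for the partial products show that $\mathcal E_y/\log y\le(\log_2y)^{-1/2+o(1)}$ with probability $1-o(1)$ (Lemma~\ref{lem:energyupper}). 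On this $\FF_y$-measurable event, your factor $\sum_j(\sum_{p\in I_j}1/p)^2\ll K\delta^2$ gets multiplied by $(\log_2Y)^{-1/2+1/200}$. The result is $(\log_2Y)^{-109/200}\log_3Y=o((\log_2Y)^{-53/100})$, and conditional Markov concludes. Your first-moment count is needed only for the pairs with $w<U_0$. These force $q\ge Y^{1/4}$ and cofactors below $U_0$, so their expected contribution is $\ll Y\log U_0/\log Y=Y(\log_2Y)^{-3}$.
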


\begin{lemma}\label{lem:conditionalsmooth}
Suppose \(y\) is sufficiently large, \(w\ge y^{12}\), and
\(s=\log w/\log y\ge\log_2y\).
For every \(z\ge y\), there is a nonnegative
\(\FF_y\)-measurable variable \(E_{w,z}\) such that
\begin{equation}\label{eq:conditionalsmooth}
\frac1w\E[\Psi(w,z)^2\mid\FF_y]
\le C\frac{\mathcal E_y}{\log y}+E_{w,z},
\qquad \E E_{w,z}\le Ce^{-s},
\end{equation}
where
\[
\mathcal E_y
=\int_1^\infty\frac{\Psi(u,y)^2}{u^2}\dd u.
\]
\end{lemma}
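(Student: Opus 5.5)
The plan is to compute the conditional second moment exactly by splitting each integer into its \(y\)-smooth and \(y\)-rough parts, and then to compare the resulting sum over rough integers with the integral \(\mathcal E_y\). That comparison uses a sieve upper bound for rough integers in intervals, together with Rankin's bound (Lemma~\ref{lem:rankin}) for the ranges where the sieve is inefficient.

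\textbf{Step 1 (exact conditional formula).} Every squarefree \(n\le w\) with \(P(n)\le z\) factors uniquely as \(n=ab\). Here \(a\) is squarefree and \(y\)-smooth, and \(b\) is squarefree with all prime factors in \((y,z]\). Then \(f(n)=f(a)f(b)\) and
\[
\Psi(w,z)=\sum_{b}f(b)\,\Psi(w/b,y).
\]
Conditionally on \(\FF_y\), the numbers \(\Psi(w/b,y)\) are fixed. The variables \(f(b)\) are products of independent signs at primes \(>y\), so by \eqref{eq:orth} they are orthonormal for distinct \(b\). Hence
\[
\E[\Psi(w,z)^2\mid\FF_y]=\sum_{b}\Psi(w/b,y)^2\le\sum_{b\ y\text{-rough}}\mu^2(b)\Psi(w/b,y)^2 .
\]
The bound is monotone in \(z\), which is why the estimate can be uniform in \(z\ge y\).

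\textbf{Step 2 (small \(b\): large smooth sums go into \(E_{w,z}\)).} Take the terms with \(b\le w^{1/2}\) and put them into \(E_{w,z}\), together with the other error terms below; this is \(\FF_y\)-measurable. For these terms \(w/b\ge w^{1/2}\), and Lemma~\ref{lem:rankin} with a large fixed \(A\) gives \(\E\Psi(u,y)^2=\sfcount(u,y)\le C_Au\log y\,e^{-A\log u/\log y}\). Therefore
\[
\frac1w\sum_{b\le w^{1/2}}\E\Psi(w/b,y)^2\le C_A\log y\,e^{-As/2}\sum_{b\le w}\frac1b\le C_A(\log y)(s\log y)e^{-As/2}.
\]
Since \(s\ge\log_2y\), we have \(\log y\le e^{s}\), and choosing \(A=8\) makes this at most \(Ce^{-s}\).

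\textbf{Step 3 (large \(b\): sieve and comparison with \(\mathcal E_y\)).} For \(b>w^{1/2}\), write \(k=\lfloor w/b\rfloor\le w^{1/2}\). The map \(u\mapsto\Psi(u,y)\) is constant on \([k,k+1)\), so
\[
\sum_{b>w^{1/2}}\Psi(w/b,y)^2=\sum_{k\le w^{1/2}}\Psi(k,y)^2\,N_k,
\]
where \(N_k\) counts \(y\)-rough \(b\) in \((w/(k+1),w/k]\), an interval of length \(L_k=w/(k(k+1))\).

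A Brun or Selberg upper-bound sieve of level \(D=y^{2}\) gives
\[
N_k\le C\frac{L_k}{\log y}+O(y^{4}),
\]
using \(\prod_{p\le y}(1-1/p)\asymp(\log y)^{-1}\) from Lemma~\ref{lem:prime-estimates}.

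\emph{Case \(L_k\ge y^{4}\log y\).} Then \(N_k\le CL_k/\log y\). Since \(1/(k(k+1))\le 4\int_k^{k+1}u^{-2}\dd u\), these terms contribute at most \(C\frac{w}{\log y}\mathcal E_y\).

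\emph{Case \(L_k<y^4\log y\).} Then \(k\gg w^{1/2}y^{-2}(\log y)^{-1/2}\), and we use the trivial bound \(N_k\le L_k+1\le 2\). Applying Lemma~\ref{lem:rankin} again,
\[
\frac1w\sum_{k\le w^{1/2}}\E\Psi(k,y)^2\le \frac{C\,w^{1/2}\cdot w^{1/2}\log y}{w}\,e^{-A\log(w^{1/2}y^{-3})/\log y},
\]
and this is \(\le Ce^{-s}\) because \(w\ge y^{12}\) gives \(\log(w^{1/2}y^{-3})\ge s\log y/4\). These terms also go into \(E_{w,z}\).

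\textbf{Main obstacle.} The delicate point is Step 3. The rough-integer count must be uniform in very short intervals, while the exceptional ranges must still yield an error whose expectation is exponentially small in \(s\), not merely \(o(1)\). I expect the hypothesis \(w\ge y^{12}\) to be exactly what leaves room for the sieve level \(y^{2}\) and the exponent losses. If the sieve remainder turns out to be awkward, the fallback is to replace sharp intervals by a smoothed count over multiplicative windows \([t,t(1+\eta)]\), in the spirit of Lemma~\ref{lem:prime-averaging}, applying the fundamental lemma of the sieve there.
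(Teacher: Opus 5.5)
Your route is essentially the paper's: factor into rough and smooth parts, use conditional orthogonality, apply Lemma~\ref{lem:rankin} to the terms with small rough part, and use a sieve bound on the intervals $(w/(k+1),w/k]$ for the rest. There is, however, a false step in the case $L_k<y^4\log y$ of Step~3, where you write $N_k\le L_k+1\le 2$. The case assumption only gives $L_k<y^4\log y$. Since $L_k=w/(k(k+1))$, it is of order $y^4\log y$ at the bottom of that range of $k$, and $L_k\le1$ holds only when $k(k+1)\ge w$, i.e.\ essentially only at the top value $k\approx w^{1/2}$. With the correct bound $N_k\le L_k+1$, your display handles only the "$+1$" part, and that sum should run only over the $k$ in this case, since you use $\log k\ge\log(w^{1/2}y^{-3})$. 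The $L_k$ part leaves
\[
\frac1w\sum_{k\in\mathcal K_2}\Psi(k,y)^2L_k=\sum_{k\in\mathcal K_2}\frac{\Psi(k,y)^2}{k(k+1)},\qquad
\mathcal K_2=\{k\le w^{1/2}:L_k<y^4\log y\}.
\]
This is a piece of $\mathcal E_y$ without the factor $1/\log y$. So it cannot be absorbed into $C\mathcal E_y/\log y$, and your proof does not account for it.

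The repair uses only your own tools: put this term into $E_{w,z}$ as well. For $k\in\mathcal K_2$ you have $\log k\ge(s/2-3)\log y\ge(s/4)\log y$, using $s\ge12$. The range $\mathcal K_2$ spans $\log(w^{1/2}/\min\mathcal K_2)\ll\log y$, so $\sum_{k\in\mathcal K_2}1/k\ll\log y$. Lemma~\ref{lem:rankin} then gives
\[
\sum_{k\in\mathcal K_2}\frac{\sfcount(k,y)}{k^2}
\le C_A(\log y)e^{-As/4}\sum_{k\in\mathcal K_2}\frac1k
\le C_A(\log y)^2e^{-As/4}\le Ce^{-s}
\]
for $A=16$, since $\log y\le e^s$.

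Alternatively, do what the paper does and split at $b\le w^{3/4}$ instead of $w^{1/2}$. Rankin's bound still gives a factor $e^{-As/4}$ for the small-$b$ terms. Every $k=\lfloor w/b\rfloor<w^{1/4}$ then has $L_k\ge\tfrac12w^{1/2}\ge\tfrac12y^6$, so the sieve bound $N_k\ll L_k/\log y$ holds uniformly and no second case arises. As a minor point, your identity $\sum_{b>w^{1/2}}\Psi(w/b,y)^2=\sum_k\Psi(k,y)^2N_k$ is really an inequality, since $N_k$ counts all rough $b$ in the interval.
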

\begin{proof}
Every squarefree integer \(n\) has a unique factorization
\(n=ab\), where \(P^-(a)>y\) and \(P(b)\le y\).
Consequently, for \(1\le y\le z\),
\[
\Psi(w,z)
=\sum_{\substack{a\le w,\ a\ {\rm squarefree}\\
P^-(a)>y,\ P(a)\le z}}
f(a)\Psi(w/a,y).
\]
Each coefficient \(\Psi(w/a,y)\) is \(\FF_y\)-measurable.
Since
\(
\E[f(a)f(a')\mid\FF_y]=\1_{\{a=a'\}},
\)
expanding the square gives
\begin{equation}\label{eq:roughfactor}
\E[\Psi(w,z)^2\mid\FF_y]
=\sum_{\substack{a\le w,\ a\ {\rm squarefree}\\
P^-(a)>y,\ P(a)\le z}}
\Psi(w/a,y)^2.
\end{equation}

Define
\[
E_{w,z}
=\frac1w
\sum_{\substack{a\le w^{3/4},\ a\ {\rm squarefree}\\
P^-(a)>y,\ P(a)\le z}}
\Psi(w/a,y)^2.
\]

By orthogonality \eqref{eq:orth},
and  Lemma~\ref{lem:rankin}, with {\(A=16\)},

\[\E\Psi(v,y)^2=\sfcount(v,y)
\le Cv(\log y)\exp\bigg(-\frac{16\log v}{\log y}\bigg).
\]

Put \(s=\log w/\log y\).
For \(a\le w^{3/4}\), we have $(w/a)^{-16/\log y}\le e^{-4s}$ and recall $\log_2 y\le s$. Hence
\[
\E E_{w,z}
\le\frac1w\sum_{a\le w^{3/4}}\sfcount(w/a,y)
\le C(\log y)e^{-4s}
\sum_{a\le w^{3/4}}\frac1a
\le C(\log y)(1+\log w)e^{-4s}
\le Ce^{-s}.
\]
Here, we have used \(\log w=s\log y\) and \(\log y\le e^s\).

It remains to bound the contribution to \eqref{eq:roughfactor}
from \(a>w^{3/4}\), that is to prove
\begin{equation}\label{eq:ff}
F_{w,z} := \frac1w
\sum_{\substack{w^{3/4}<a\le w,\ a\ {\rm squarefree}\\
P^-(a)>y,\ P(a)\le z}}
\Psi(w/a,y)^2 \le \frac C{\log y}\mathcal E_y.
\end{equation}
Put \(r=\lfloor w/a\rfloor\) and $J_r:=({w}/{(r+1)},{w}/{r}]$.
Then, dropping the squarefree
and upper-prime restrictions, we obtain
\[
F_{w,z} \le
\frac1w\sum_{1\le r<w^{1/4}}\Psi(r,y)^2
\#\{a\in J_r:P^-(a)>y\}.
\]
Since for every \(1\le r<w^{1/4}\), the  length of \(J_r\) satisfies
\(
|J_r|=\frac{w}{r(r+1)}\ge\frac12w^{1/2} \ge y
\), therefore by
\cite[(3.20), p.~86]{MV} and Lemma \ref{lem:prime-estimates},
\[
\#\{a\in J_r:P^-(a)>y\}
=\#\bigg\{a\in J_r:\bigg(a,\prod_{p\le y}p\bigg)=1\bigg\}
\le C|J_r|\prod_{p\le y}\biggl(1-\frac{1}{p}\biggr)
\le\frac{C|J_r|}{\log y}.
\]
Consequently,
\[
F_{w,z}\le\frac C{\log y}
\sum_{r\ge1}\frac{\Psi(r,y)^2}{r(r+1)}
=\frac C{\log y}\mathcal E_y.
\]
The equality follows by integrating the step function
\(\Psi(u,y)^2\) over each interval \([r,r+1)\).
This proves \eqref{eq:ff} and completes the proof.
\end{proof}

\begin{lemma}\label{lem:energyupper}
For \(y\ge1\), put
\begin{equation}\label{eq:euler}
F_y(t)=\prod_{p\le y}(1+\varepsilon_pp^{-1/2-it}).
\end{equation}
Then
\begin{equation}\label{eq:parsevalenergy}
\mathcal E_y=\int_1^\infty\frac{\Psi(u,y)^2}{u^2}\dd u
=\frac1{2\pi}\int_{\RR}
\frac{|F_y(t)|^2}{1/4+t^2}\dd t,
\end{equation}

Moreover, for every fixed \(a>0\),
\begin{equation}\label{eq:energyupper}
\PP\biggl(
\frac{\mathcal E_y}{\log y}>
(\log_2y)^{-1/2+a}
\biggr)\longrightarrow0
\qquad\text{as } y\to\infty.
\end{equation}
\end{lemma}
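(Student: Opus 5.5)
The plan has two parts: an exact Plancherel computation for \eqref{eq:parsevalenergy}, and then a low-moment bound for the Euler product that gives \eqref{eq:energyupper} via Markov's inequality.

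\textbf{The identity.} Write \(u=e^v\) with \(v\ge0\). Then \(\dd u/u^2=e^{-v}\dd v\), and
\[
\mathcal E_y=\int_0^\infty\bigl|e^{-v/2}\Psi(e^v,y)\bigr|^2\dd v .
\]
The function \(G(v)=e^{-v/2}\Psi(e^v,y)\1_{\{v\ge0\}}\) is in \(L^2(\RR)\). Indeed, \(\Psi(\cdot,y)\) is a finite sum of \(f(n)\) over squarefree \(y\)-smooth \(n\), so \(\Psi(e^v,y)\) is bounded in \(v\). Its Fourier transform is
\[
\int_0^\infty e^{-v/2-itv}\sum_{n\le e^v,\ P(n)\le y}f(n)\dd v
=\sum_{P(n)\le y}f(n)\frac{n^{-1/2-it}}{1/2+it}
=\frac{F_y(t)}{1/2+it}.
\]
Here we used multiplicativity and the squarefree support, so that the sum over \(n\) factors as \(\prod_{p\le y}(1+\varepsilon_pp^{-1/2-it})\). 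Plancherel, \(\int|G|^2=\frac1{2\pi}\int|\widehat G|^2\), then gives \eqref{eq:parsevalenergy} at once.

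\textbf{The probability bound.} By Markov's inequality with a power \(r<1\), it suffices to show
\[
\E\mathcal E_y^{\,r}\ll\Bigl(\frac{\log y}{\sqrt{\log_2y}}\Bigr)^r\quad\text{for some fixed }r\in(0,1),
\]
or more crudely \(\E\mathcal E_y^{\,r}\ll(\log y)^r(\log_2y)^{-r/2+o(1)}\). Either version gives
\[
\PP\bigl(\mathcal E_y/\log y>(\log_2y)^{-1/2+a}\bigr)\ll(\log_2y)^{-ra+o(1)}\to0 .
\]
This is precisely Harper's low-moment estimate for random Euler products \cite{HarperLow}. The key input there is
\[
\E\Bigl(\int_{-1/2}^{1/2}|F_y(t)|^2\dd t\Bigr)^r\ll\Bigl(\frac{\log y}{1+(1-r)\sqrt{\log_2y}}\Bigr)^r
\]
for \(2/3\le r\le1\), which holds in the Rademacher case as well.

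To pass from \((-1/2,1/2)\) to the weighted integral over \(\RR\), decompose \(\RR\) into unit intervals \([k-1/2,k+1/2]\). Use subadditivity, \((\sum x_k)^r\le\sum x_k^r\), together with the weight bound \((1/4+t^2)^{-1}\ll(1+k^2)^{-1}\):
\[
\E\mathcal E_y^{\,r}\ll\sum_k(1+k^2)^{-r}\,\E\Bigl(\int_{k-1/2}^{k+1/2}|F_y|^2\Bigr)^r .
\]
Choosing \(r>1/2\) makes the weights summable.

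\textbf{Main obstacle.} The Rademacher product is not translation invariant in \(t\), unlike the Steinhaus case. So I need the shifted-interval version of Harper's bound uniformly in \(k\), and the terms near \(t=0\) need separate care. There the product \(\prod(1+\varepsilon_pp^{-1/2})\) is real and has a different (lognormal-type) behaviour, but an interval of length \(\asymp1/\log y\) contributes negligibly after the \(r\)-th power.

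I would first try to quote Harper's argument directly; his Rademacher proof handles general shifts, with the conditional Gaussian-approximation step applied on each \([k-1/2,k+1/2]\), losing at most a polynomial factor in \(k\), which the weight \((1+k^2)^{-r}\) absorbs if \(r\) is close to \(1\). Failing that, a cruder route suffices. Take \(r=1\) on the range \(|t|>(\log y)^{-1}\) and use the fact that the mean-square integral is \(\asymp\log y\). Then show that the multiplicative-chaos saving \((\log_2y)^{-1/2}\) only needs to be proved in probability, via Harper's conditioning on the primes up to \(y^{1/(\log_2 y)}\) and the bound for the first moment of the critical chaos total mass.
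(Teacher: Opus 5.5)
Your Plancherel computation for \eqref{eq:parsevalenergy} is exactly the paper's argument. The gap is in the tail bound, at the point you flag yourself.

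Your decomposition into unit intervals needs \(\E\bigl(\int_{k-1/2}^{k+1/2}|F_y(t)|^2\dd t\bigr)^r\ll(\log y/\sqrt{\log_2y})^r\) for every \(k\in\mathbb Z\), with a loss in \(k\) that is summable against \((1+k^2)^{-r}\). You only cite the case \(k=0\). The claim that Harper's argument on shifted intervals loses ``at most a polynomial factor in \(k\), which the weight absorbs if \(r\) is close to \(1\)'' is not a citation. It is also not correct as stated: a loss \(|k|^c\) is absorbed only if \(c<2r-1\le1\). Your fallback does not work either. The first moment of the weighted integral over \(|t|>(\log y)^{-1}\) is \(\asymp\log y\), so it contains no saving at all.

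The missing observation is that \(\E|F_y(t)|^2=\prod_{p\le y}(1+p^{-1})\asymp\log y\) uniformly in \(t\in\RR\). So the first moment alone suffices wherever the weight or the length of the \(t\)-range is small. The paper splits \(\mathcal E_y=I_0+I_1+I_\infty\) at \(|t|=(\log_2y)^{-2}\) and \(|t|=(\log_2y)^{2}\). Tonelli gives \(\E(I_0+I_\infty)\ll(\log y)(\log_2y)^{-2}\), and Markov's inequality with exponent one then handles both obstacles you worry about: the lack of translation invariance for large \(|t|\) and the special behaviour near \(t=0\).

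On the remaining window the paper does not use low moments. By Lemma~6.5 of \cite{BIRestricted}, outside an event of probability \(O(1/\log_2y)\), the barrier event \(\mathcal D(t)\) holds simultaneously for all \(|t|\le(\log_2y)^2+2\). The truncated first moment then satisfies \(\E J\ll(\log y)(\log_3y)^2(\log_2y)^{-1/2}\), by the ballot-type estimate (6.36) of \cite{BIRestricted}. That estimate is pointwise in \(t\) and uniform on the window, and Markov finishes the proof.

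Your low-moment route can be repaired the same way, by using it only on the window. There you would need Harper's Rademacher bound uniformly over \(O((\log_2y)^2)\) shifted unit intervals, all bounded away from \(t=0\). That would give the sharper rate \((\log_2y)^{-ra}\). However, it requires rerunning Harper's argument for shifted Rademacher products, which you have not supplied. The paper's truncated first moment needs only a pointwise estimate and loses a harmless factor \((\log_3y)^2\).
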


\begin{proof}
We first prove \eqref{eq:parsevalenergy}, writing out the
application of the Mellin--Plancherel formula
\cite[(5.26)]{MV}. Define
\[
 g_y(s)=
 \begin{cases}
 e^{-s/2}\Psi(e^s,y),&s\ge0,\\
 0,&s<0.
 \end{cases}
\]
The substitution \(u=e^s\) gives
\[
 \int_{\RR}|g_y(s)|^2\dd s
 =\int_0^\infty e^{-s}\Psi(e^s,y)^2\dd s
 =\int_1^\infty\frac{\Psi(u,y)^2}{u^2}\dd u
 =\mathcal E_y.
\]

There are only finitely many squarefree integers whose prime
factors are at most \(y\). Thus \(\Psi(u,y)\) is bounded as
a function of \(u\), and \(g_y\in L^1(\RR)\cap L^2(\RR)\). Next, we obtain
\[
\begin{split}
 \widehat g_y(t)
 &=\int_0^\infty
   \Psi(e^s,y)e^{-(1/2+it)s}\dd s
 =\sum_{\substack{n\ {\rm squarefree}\\P(n)\le y}}
   f(n)\int_{\log n}^\infty
   e^{-(1/2+it)s}\dd s\\
 &=\frac1{1/2+it}
   \sum_{\substack{n\ {\rm squarefree}\\P(n)\le y}}
   \frac{f(n)}{n^{1/2+it}}
 =\frac{F_y(t)}{1/2+it}.
\end{split}
\]
The interchange of the sum and the integral is justified
because the sum is finite. Plancherel's identity now yields
\[
 \mathcal E_y
 =\int_{\RR}|g_y(s)|^2\dd s
 =\frac1{2\pi}\int_{\RR}|\widehat g_y(t)|^2\dd t
 =\frac1{2\pi}\int_{\RR}
   \frac{|F_y(t)|^2}{1/4+t^2}\dd t.
\]
This proves \eqref{eq:parsevalenergy}.

We now prove \eqref{eq:energyupper}. Orthogonality \eqref{eq:orth} and Lemma \ref{lem:prime-estimates} yield
\[
\E|F_y(t)|^2
=\sum_{\substack{n\ge1\\P(n)\le y}}\frac{\mu^2(n)}n
=\prod_{p\le y}(1+p^{-1})
\asymp\log y,
\]
uniformly in \(t\in\RR\).
Split the integral in \eqref{eq:parsevalenergy} as

\[
\mathcal E_y=I_0+I_1+I_\infty =
\frac1{2\pi}\bigg(\int_{|t|<(\log_2y)^{-2}}
+\int_{(\log_2y)^{-2}\le|t|\le(\log_2y)^2}
+ \int_{|t|>(\log_2y)^2}\bigg)
\frac{|F_y(t)|^2}{1/4+t^2}\dd t.
\]

By Tonelli's theorem,

\[
\E(I_0+I_\infty)
\ll(\log y)(\log_2y)^{-2}.
\]

For each \(t\in\RR\), let
\[
\mathcal D(t)=
\bigcap_{j=0}^{\lfloor\log_2y-1\rfloor}
\bigg\{
\big|F_{\exp(e^{-j}\log y)}(t)\big|
\le e^{-j}(\log y)(\log_2y)^2
\bigg\}.
\]
By \cite[Lemma~6.5]{BIRestricted}, applied with \(a_r\) replaced by \(y\), there is an event \(G_y\)
such that
\[
\PP(G_y^c)\le\frac{C}{\log_2y},
\]
and, on \(G_y\), the event \(\mathcal D(t)\) holds
simultaneously for every \(|t|\le(\log_2y)^2+2\).
Put
\[
J=\frac1{2\pi}
\int_{(\log_2y)^{-2}\le|t|\le(\log_2y)^2}
\frac{|F_y(t)|^2\1_{\mathcal D(t)}}{1/4+t^2}\dd t.
\]
Then \(I_1=J\) on \(G_y\).
The proof of \cite[Lemma~6.7, (6.36)]{BIRestricted}, with
cutoff \(y\), and Tonelli's theorem give

\[
\begin{aligned}
\E J
&\le C(\log y)(\log_3y)^2(\log_2y)^{-1/2}
\int_{\RR}\frac{\dd t}{1/4+t^2}
\ll(\log y)(\log_3y)^2(\log_2y)^{-1/2}.
\end{aligned}
\]

Fix \(a>0\) and put   {\(K=(\log y)(\log_2y)^{-1/2+a}\)}.
Thus the union bound and Markov's inequality yield

\[
\begin{split}
\PP(\mathcal E_y>K)
&\le \PP(G_y^c)
+\PP(I_0>K/3)+\PP(J>K/3)
+\PP(I_\infty>K/3)\\
&\le C(\log_2y)^{-1}
+\frac3K\bigl(\E I_0+\E J+\E I_\infty\bigr)\\
&\le C\big(
(\log_2y)^{-1}
+(\log_2y)^{-3/2-a}
+(\log_3y)^2(\log_2y)^{-a}
\big)
\longrightarrow0.
\end{split}
\]
This proves \eqref{eq:energyupper}.
\end{proof}

\begin{proof}[Proof of Proposition \ref{prop:collision}]

\medskip
\noindent
{\sc Step 1.} Put
\[
y=Y^{1/(\log_2Y)^4},
\qquad
U_0=Y^{1/(\log_2Y)^3}=y^{\log_2Y}.
\]
Then   {\(y<y_0\)}, and
\(
\log_2y\sim\log_2Y.
\)
Lemma~\ref{lem:energyupper}, with \(a=1/400\), therefore gives

\begin{equation}\label{eq:energygood}
\PP\biggl(
\frac{\mathcal E_y}{\log y}>
(\log_2Y)^{-1/2+1/200}
\biggr)=o(1).
\end{equation}
Here the difference between \(1/400\) and \(1/200\)
absorbs the replacement of \(\log_2y\) by \(\log_2Y\).

Fix \(1\le j\le K\) and a prime \(p\in I_j\).
By definition,
\begin{equation}\label{eq:collisiondecomp}
d_p(Y)
=
\sum_{\substack{q\in I_j\\q<p}}
\varepsilon_q\Psi(Y/(pq),q^-).
\end{equation}
We split this sum according to the size of the remaining
cofactor range. Put
\[
\begin{aligned}
d_p^{(1)}
&=
\sum_{\substack{q\in I_j,\ q<p\\pq\le Y/U_0}}
\varepsilon_q\Psi(Y/(pq),q^-),\\
d_p^{(2)}
&=
\sum_{\substack{q\in I_j,\ q<p\\Y/U_0<pq\le Y}}
\varepsilon_q\Psi(Y/(pq),q^-).
\end{aligned}
\]
All sums over \(q\) are over primes. Terms with \(pq>Y\)
vanish because \(\Psi(u,q^-)=0\) for \(u<1\).

Put

\[
D_i=\sum_{y_0<p\le y_K}(d_p^{(i)})^2,
\qquad i=1,2.
\]

Since \(d_p(Y)=d_p^{(1)}+d_p^{(2)}\),
\[
D(Y)\le2D_1+2D_2.
\]

\medskip
\noindent
{\sc Step 2.}
We first estimate \(D_1\).
Put \(w_{pq}=Y/(pq)\). Since
\(
w_{pq}\ge U_0=y^{\log_2Y}\) and
\(s_{pq}:=\frac{\log w_{pq}}{\log y}\ge\log_2Y
\),
for sufficiently large \(Y\), we can apply
Lemma~\ref{lem:conditionalsmooth} to
\(\E[\Psi(w_{pq},q^-)^2\mid\FF_y]\).
Conditional orthogonality of the martingale differences therefore gives
\[
\begin{split}
\frac1Y\E[D_1\mid\FF_y]
&=
\sum_{\substack{1\le j\le K,\ p,q\in I_j,\ q<p\\
pq\le Y/U_0}}
\frac1{pq}\frac1{w_{pq}}
\E[\Psi(w_{pq},q^-)^2\mid\FF_y].
\end{split}
\]
Each layer has reciprocal-prime mass \(O(\delta)\), by
\eqref{eq:primesosc} applied with $v=0$, $z=y_{j-1}$ and $U=y_j$. Consequently,
\begin{equation}\label{eq:collisionmass}
r_Y:=
\sum_{j=1}^K
\biggl(\sum_{p\in I_j}\frac1p\biggr)^2
\ll K\delta^2
\ll(\log_2Y)^{-1/20}\log_3Y.
\end{equation}
Applying Lemma~\ref{lem:conditionalsmooth} term by term gives
\begin{equation}\label{eq:collisionconditional}
\frac1Y\E[D_1\mid\FF_y]
\le
Cr_Y\frac{\mathcal E_y}{\log y}
+\mathcal R_y,
\qquad
\E\mathcal R_y\ll\frac{r_Y}{\log Y},
\end{equation}
where \(\mathcal R_y\) is the nonnegative weighted sum of
the errors in \eqref{eq:conditionalsmooth}. The expectation
bound uses \(s\ge\log_2Y\), and hence
\(e^{-s}\le(\log Y)^{-1}\).

Let

\[
G_y=
\biggl\{
\frac{\mathcal E_y}{\log y}
\le(\log_2Y)^{-1/2+1/200}
\biggr\}.
\]
This event is \(\FF_y\)-measurable, and
\(\PP(G_y^c)=o(1)\) by \eqref{eq:energygood}. On \(G_y\),
the first term on the right-hand side of
\eqref{eq:collisionconditional} is at most
\(
C(\log_2Y)^{-109/200}\log_3Y.
\)
For any fixed \(c>0\), conditional Markov's inequality gives

\[
\begin{aligned}
\PP\Big(
D_1&>cY(\log_2Y)^{-53/100}
\Big)
\le
\PP(G_y^c)
+\frac{(\log_2Y)^{53/100}}{cY}
\E[\1_{G_y}D_1]\\
&=
\PP(G_y^c)
+\frac{(\log_2Y)^{53/100}}{c}
\E\biggl[
\1_{G_y}\frac{\E[D_1\mid\FF_y]}Y
\biggr]\\
&\le
o(1)
+O_c\biggl((\log_2Y)^{-3/200}\log_3Y\biggr)
+O_c\biggl(
\frac{r_Y(\log_2Y)^{53/100}}{\log Y}\biggr)=o(1).
\end{aligned}
\]

\medskip
\noindent
{\sc Step 3.}
We next estimate \(D_2\). For all contributing pairs
\[
\frac{Y}{U_0}<pq\le Y,\qquad p>q,
\qquad
\log p\le e^\delta\log q.
\]
Thus

\[
(1+e^\delta)\log q
\ge\log(pq)
>
\big(1-(\log_2Y)^{-3}\big)\log Y.
\]
Since \(\delta\to0\), this implies \(q\ge Y^{1/4}\)
for sufficiently large \(Y\). Also \(q\le\sqrt Y\),
and the remaining cofactor is at most \(U_0\).

For each fixed \(p\in I_j\), we have
\[
d_p^{(2)}
=
\sum_{\substack{q\in I_j,\ q<p\\Y/U_0<pq\le Y}}
\ \sum_{\substack{a\le Y/(pq)\\P(a)<q}}f(qa).
\]
Only squarefree \(a\) contribute. Since \(P(a)<q\), the
largest prime factor of \(qa\) is \(q\). Thus distinct pairs
\((q,a)\) give distinct integers \(qa\), and orthogonality
yields
\[
\E D_2
=
\sum_{j=1}^K
\sum_{\substack{p,q\in I_j,\ q<p\\Y/U_0<pq\le Y}}
\ \sum_{\substack{a\le Y/(pq)\\
a\ {\rm squarefree},\ P(a)<q}}1.
\]
Each triple represents \(n=pqa\), with \(p\) and \(q\) its
largest two prime factors, so its assignment is unique.
Every contributing triple satisfies
\(
a<U_0\),
\(Y^{1/4}\le q\le\sqrt Y\),
\(p\le\frac{Y}{aq}\).
For fixed \(a,q\), there are
\(\pi(Y/(aq))\) possible primes \(p\), where \(\pi(v)\)
denotes the number of primes at most \(v\). Consequently,

\[
\E D_2
\le
\sum_{a\le U_0}
\sum_{\substack{Y^{1/4}\le q\le\sqrt Y\\q\ {\rm prime}}}
\pi\biggl(\frac{Y}{aq}\biggr).
\]

For all \(a,q\) in this sum,
\(
{Y}/(aq)
\ge Y^{1/2-(\log_2Y)^{-3}}
\).
Hence the prime-counting bound yields
\(
\pi({Y}/({aq}))
\ll{Y}/({aq\log Y}).
\)
Lemma \ref{lem:prime-estimates} yields
\[
\E D_2
\ll
\frac{Y}{\log Y}
\sum_{a\le U_0}\frac1a
\sum_{\substack{Y^{1/4}\le q\le\sqrt Y\\q\ {\rm prime}}}
\frac1q
\ll
\frac{Y\log U_0}{\log Y}
=
Y(\log_2Y)^{-3}.
\]
Applying Markov's inequality once more gives

\[
\PP\big(
D_2>cY(\log_2Y)^{-53/100}
\big)
\le C(\log_2Y)^{-247/100}=o(1).
\]

Taking \(c=1/4\) in the bounds for \(D_1\) and \(D_2\),
and using \(D(Y)\le2D_1+2D_2\), proves the proposition.
\end{proof}
\begin{lemma}\label{lem:smoothmax}
For \(u\ge1\), \(y\ge1\), and \(1\le b_0\le b_1\),
\begin{align}
\E\Psi(u,y)^2&\le u, \label{eq:smoothsecond}\\
\E\sup_{b\ge1}|\Psi(u,b)|^2&\le4u,
\label{eq:smoothmaxprime}\\
\E\sup_{b_0\le b\le b_1}
|\Psi(u,b)-\Psi(u,b_0)|^2
&\le4u\sum_{b_0<p\le b_1}\frac1p.
\label{eq:incrementprime}
\end{align}
\end{lemma}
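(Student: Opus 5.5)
The three bounds in Lemma~\ref{lem:smoothmax} all come from orthogonality \eqref{eq:orth} combined with Doob's $L^2$ maximal inequality. The key observation is that, for fixed $u$, the map $b\mapsto\Psi(u,b)$ is a martingale in the filtration $(\FF_b)_{b\ge1}$.

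\emph{The second moment bound \eqref{eq:smoothsecond}.} By \eqref{eq:orth},
\[
\E\Psi(u,y)^2=\sfcount(u,y)\le\#\{n\le u\}\le u,
\]
and nothing more is needed.

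\emph{The maximal bound \eqref{eq:smoothmaxprime}.} Fix $u$. Recall that $\Psi(u,b)=\E[M(u)\mid\FF_b]$, as noted in the notation subsection. The function $b\mapsto\Psi(u,b)$ is constant between consecutive primes and stabilises at $M(u)$ once $b\ge u$. So the supremum over $b\ge1$ is a maximum over the finitely many primes $p_1<\dots<p_K\le u$, together with the value $\Psi(u,1)=1$. The sequence $\bigl(\Psi(u,p_k)\bigr)_k$ is a martingale with respect to $(\FF_{p_k})_k$. Doob's $L^2$ maximal inequality \cite[Theorem~4.4.4]{Durrett} therefore gives
\[
\E\sup_{b\ge1}|\Psi(u,b)|^2\le4\,\E M(u)^2=4\sfcount(u,u)\le4u.
\]
This is the same argument as Step~1 of Lemma~\ref{lem:anbound}, with $H_{u,s}$ replaced by $M(u)$.

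\emph{The increment bound \eqref{eq:incrementprime}.} Fix $u$ and $1\le b_0\le b_1$. For $b\in[b_0,b_1]$ we have
\[
\Psi(u,b)-\Psi(u,b_0)=\E\bigl[\Psi(u,b_1)-\Psi(u,b_0)\mid\FF_b\bigr],
\]
so the increments again form a martingale indexed by the primes in $(b_0,b_1]$, and this martingale starts at $0$. Doob's inequality bounds the left side of \eqref{eq:incrementprime} by $4\E(\Psi(u,b_1)-\Psi(u,b_0))^2$. By \eqref{eq:orth}, this second moment equals the number of squarefree $n\le u$ with $b_0<P(n)\le b_1$.

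It remains to count these integers. Each such $n$ is divisible by at least one prime $p\in(b_0,b_1]$, so the count is at most
\[
\sum_{b_0<p\le b_1}\#\{n\le u:p\mid n\}\le\sum_{b_0<p\le b_1}\frac up.
\]
This yields \eqref{eq:incrementprime}.

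None of these steps is a real obstacle. The only point needing care is measurability: the supremum over a continuous parameter $b$ must reduce to a finite maximum over primes, so that Doob's inequality applies. This holds because $\Psi(u,\cdot)$ is a right-continuous step function with jumps only at primes $p\le u$.
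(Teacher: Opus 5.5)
Your proof is correct and follows essentially the same route as the paper: orthogonality for the second moment, and Doob's $L^2$ maximal inequality applied to the martingale $b\mapsto\Psi(u,b)$, and to its increments from $b_0$, for the two maximal bounds. The only cosmetic difference is in the final count. The paper uses the unique representation $n=pm$ with $p=P(n)$ and $P(m)<p$, whereas you bound the count via divisibility by some prime in $(b_0,b_1]$; both give $u\sum_{b_0<p\le b_1}1/p$.
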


\begin{proof}
Expanding the square and applying \eqref{eq:orth}, we obtain
\[
\E\Psi(u,y)^2
=
\sum_{\substack{m,n\le u\\P(m),P(n)\le y}}
\E[f(m)f(n)]
=
\sum_{\substack{n\le u\\P(n)\le y}}\E f(n)^2
\le u.
\]
This proves \eqref{eq:smoothsecond}.

Since
$\Psi(u,b)=\E[M(u)\mid\FF_b]$, the sequence \(\{\Psi(u,b)\}_b\) is a martingale with
terminal value \(M(u)\). Therefore, by Doob's maximal inequality and \eqref{eq:orth}
\[
\E\sup_{b\ge1}|\Psi(u,b)|^2
\le4\E M(u)^2
=4\sum_{n\le u}\E f(n)^2
\le4u.
\]
This gives \eqref{eq:smoothmaxprime}.
Next, define \(D_b=\Psi(u,b)-\Psi(u,b_0)\), then $\{D_b\}_{b=b_0}^{b_1}$
is a martingale with  initial value zero and terminal value
\[
D_{b_1}
=\sum_{\substack{n\le u\\b_0<P(n)\le b_1}}f(n).
\]
Doob's maximal inequality together with  \eqref{eq:orth} entail
\[
\E\sup_{b_0\le b\le b_1}|D_b|^2
\le4\E|D_{b_1}|^2
=4\sum_{\substack{n\le u\\b_0<P(n)\le b_1}}
\E f(n)^2.
\]
Only squarefree integers contribute to the last sum.
Each such integer has a unique representation \(n=pm\),
where \(p=P(n)\) and \(P(m)<p\). Therefore,
\[
\E\sup_{b_0\le b\le b_1}|D_b|^2
\le 4\sum_{\substack{n\le u\\b_0<P(n)\le b_1}}\E f(n)^2
=
4 \sum_{b_0<p\le b_1}
\sum_{\substack{m\le u/p\\P(m)<p}}\E f(pm)^2
\le4 u\sum_{b_0<p\le b_1}\frac1p.
\]
This proves \eqref{eq:incrementprime}.
\end{proof}

\begin{proof}[Proof of Proposition~\ref{prop:flatness}]
We first replace the prime sums by integrals. We then estimate
the variation of these integrals over the whole real block and finally apply Markov's
inequality. Throughout the proof, put
\[
Y_+=Y^{1+(\log_2Y)^{-100}},\qquad
a_0=\log y_0=\frac{\log Y}{(\log_2Y)^2}.
\]
We use the bounds
\[
|\mathcal X_Y|\le\log Y,\qquad
K\ll(\log_2Y)^{1/20}\log_3Y,\qquad
\log y_K\le2\log Y,
\]
and the inclusion \(\mathcal X_Y\subset[Y,Y_+]\).
Recall, \(Y\in\mathcal X_Y\).

\medskip\noindent
{\sc Step 1. }
Put \(s_0=(\log Y)^{-40}\). For each prime \(p\), define a probability measure
\[
\nu_p(\dd t)
=\frac{\1_{[p,(1+s_0)p]}(t)}{s_0p}\dd t.
\]
Then
\[
 V_j(x)
 =\sum_{p\in I_j}\int b_p(x)^2\,\nu_p(\dd t),\qquad
 D(x)
 =\sum_{j=1}^K\sum_{p\in I_j}
   \int d_p(x)^2\,\nu_p(\dd t).
\]

If \(p\in I_j\), put
\[
\widetilde b_p(x,t)=\Psi(x/t,y_{j-1}),\qquad
\widetilde d_p(x,t)
=\Psi(x/t,t)-\Psi(x/t,y_{j-1}),
\]
and define the averaged clocks
\[
\overline V_j(x)
=\sum_{p\in I_j}\int\widetilde b_p(x,t)^2\nu_p(\dd t),
\qquad
\overline D(x)
=\sum_{j=1}^K\sum_{p\in I_j}
\int\widetilde d_p(x,t)^2\nu_p(\dd t).
\]
The goal of the first step is to prove
\begin{equation}
\label{eq:vvdd}
\E\bigg[
\sum_{j=1}^K\bigg|\frac{V_j(x)-\overline V_j(x)}x\bigg|
+\bigg|\frac{D(x)-\overline D(x)}x\bigg|
\bigg]
\le C \bigl(s_0+x^{-1}+y_0^{-1}\bigr)^{1/2}.
\end{equation}

Fix a deterministic \(x\in\mathcal X_Y\).
For \(p\le t\le(1+s_0)p\), orthogonality gives
\[
\E|b_p(x)-\widetilde b_p(x,t)|^2
=\sum_{\substack{x/t<n\le x/p\\P(n)\le y_{j-1}}}\mu^2(n).
\]
Every pair \((p,n)\) counted here satisfies
\[
\frac{x}{1+s_0}<pn\le x,
\qquad P(n)\le y_{j-1}<p.
\]
Thus \(p\) is the largest prime factor of \(pn\), and the
map \((p,n)\mapsto pn\) is injective over all layers.
Since each \(\nu_p\) has total mass one, summing and
integrating yields
\[
\E\sum_{j=1}^K\sum_{p\in I_j}
\int|b_p(x)-\widetilde b_p(x,t)|^2\nu_p(\dd t)
\le s_0x+1.
\]

For the discarded coefficients, first keep the prime cutoffs
fixed and change only the argument. The intermediate coefficient is
\[
d_p^{\circ}(x,t)
=\Psi(x/t,p^-)-\Psi(x/t,y_{j-1}).
\]
The difference \(d_p(x)-d_p^{\circ}(x,t)\) is a sum over
\(x/t<n\le x/p\), with \(y_{j-1}<P(n)<p\).
The same injective counting argument therefore gives
\[
\E\sum_{j=1}^K\sum_{p\in I_j}
\int|d_p(x)-d_p^{\circ}(x,t)|^2\nu_p(\dd t)
\le s_0x+1.
\]
Next,
\[
\widetilde d_p(x,t)-d_p^{\circ}(x,t)
=\sum_{\substack{n\le x/t\\p\le P(n)\le t}}f(n).
\]
Each integer in this sum has a largest prime factor
\(q\in[p,t]\). Orthogonality and an upper bound for the
number of multiples of \(q\) give
\[
\E|\widetilde d_p(x,t)-d_p^{\circ}(x,t)|^2
\le\frac xp\sum_{p\le q\le(1+s_0)p}\frac1q.
\]
The terms with \(q=p\), summed over \(p>y_0\), contribute
at most
\[
x\sum_{p>y_0}\frac1{p^2}
\le x\sum_{n>y_0}\frac1{n^2}
\le \frac{Cx}{y_0}.
\]
For \(q>p\), \eqref{eq:pnt} and partial summation, as in the proof of Lemma~\ref{lem:prime-averaging}, give, since \(e^{-c\sqrt{\log y_0}}\le s_0\),
\[
\sum_{p<q\le(1+s_0)p}\frac1q
\le\frac{C s_0}{\log p},\qquad p\ge y_0.
\]
Consequently, by \eqref{eq:prime-products}, the total
contribution from \(q>p\) is at most
\[
Cxs_0\sum_{p>y_0}\frac1{p\log p}
\le\frac{Cxs_0}{\log y_0}.
\]
Combining the two changes with
\(|a+b|^2\le2|a|^2+2|b|^2\), we obtain
\[
\E\sum_{j=1}^K\sum_{p\in I_j}
\int|d_p(x)-\widetilde d_p(x,t)|^2\nu_p(\dd t)
\ll s_0x+1+\frac{x}{y_0}.
\]

We now use the coefficient estimates to bound
\[
\sum_{j=1}^K|V_j(x)-\overline V_j(x)|
\qquad\text{and}\qquad
|D(x)-\overline D(x)|.
\]
Orthogonality and uniqueness of the largest-prime
representation give
\[
\E\sum_{j=1}^K V_j(x)\le x,
\qquad
\E D(x)\le x.
\]
Indeed, each term in either expectation corresponds to a
squarefree integer \(pn\le x\), with \(p=P(pn)\), and
such an integer is counted at most once.
The averaged vectors have expected squared norms \(O(x)\),
by the coefficient error estimates and
\(\|b\|^2\le2\|a\|^2+2\|a-b\|^2\).

Using \(a^2-b^2=(a-b)(a+b)\), followed by
Cauchy--Schwarz over the primes and the integration variable,
we obtain
\[
\begin{aligned}
|V_j(x)-\overline V_j(x)|
&=
\bigg|
\sum_{p\in I_j}\int
\bigl(b_p(x)-\widetilde b_p(x,t)\bigr)
\bigl(b_p(x)+\widetilde b_p(x,t)\bigr)
\,\nu_p(\dd t)
\bigg|\\
&\le
\bigg(
\sum_{p\in I_j}\int
|b_p(x)-\widetilde b_p(x,t)|^2\,\nu_p(\dd t)
\bigg)^{1/2}
\bigg(
\sum_{p\in I_j}\int
|b_p(x)+\widetilde b_p(x,t)|^2\,\nu_p(\dd t)
\bigg)^{1/2}\\
&\le
\bigg(
\sum_{p\in I_j}\int
|b_p(x)-\widetilde b_p(x,t)|^2\,\nu_p(\dd t)
\bigg)^{1/2}\sqrt{2\bigl(V_j(x)+\overline V_j(x)\bigr)}.
\end{aligned}
\]
Summing over \(j\) and applying Cauchy--Schwarz once more,
now to the finite sum over the layers, yields
\[
\sum_{j=1}^K|V_j(x)-\overline V_j(x)|
\le
\bigg(\sum_{j=1}^K
\sum_{p\in I_j}\int
|b_p(x)-\widetilde b_p(x,t)|^2\,\nu_p(\dd t)
\bigg)^{1/2}\bigg(2\sum_{j=1}^K
\bigl(V_j(x)+\overline V_j(x)\bigr)\bigg)^{1/2}.
\]
Applying Cauchy--Schwarz to the expectation, and arguing
in the same way for \(D(x)-\overline D(x)\), yields \eqref{eq:vvdd} and completes the first step.

\medskip\noindent
{\sc Step 2.}
Define
\begin{align}
\mathcal V_j(x)
&=\frac1x\int_{y_{j-1}}^{y_j}
\Psi(x/t,y_{j-1})^2\frac{\dd t}{\log t},
\label{eq:intV}\\
\mathcal V_D(x)
&=\frac1x\sum_{j=1}^K\int_{y_{j-1}}^{y_j}
\bigl(\Psi(x/t,t)-\Psi(x/t,y_{j-1})\bigr)^2
\frac{\dd t}{\log t}.
\label{eq:intD}
\end{align}
We shall prove that, with probability \(1-o(1)\),
\begin{equation}\label{eq:replaceprimes}
\sum_{j=1}^K\max_{x\in\mathcal X_Y}
\bigg|\frac{V_j(x)}x-\mathcal V_j(x)\bigg|+
\max_{x\in\mathcal X_Y}
\bigg|\frac{D(x)}x-\mathcal V_D(x)\bigg|
\le(\log Y)^{-2}.
\end{equation}

For each layer, put
\[
w_j(t)=\sum_{\substack{p\in I_j\\p\le t\le(1+s_0)p}}
\frac1{s_0p}.
\]
Tonelli's theorem gives
\[
\overline V_j(x)
=\int\Psi(x/t,y_{j-1})^2w_j(t)\dd t,
\]
and the analogous representation of \(\overline D(x)\)
with squared integrand
\(\bigl(\Psi(x/t,t)-\Psi(x/t,y_{j-1})\bigr)^2\).
The support of \(w_j\) is contained in
\([y_{j-1},(1+s_0)y_j]\).

Apply Lemma~\ref{lem:prime-averaging} with
\(a=y_{j-1}\), \(b=y_j\), and \(\delta=s_0\).
Uniformly over the layers, it gives
\[
w_j(t)=\frac{1+O((\log Y)^{-30})}{\log t}
\quad\text{if }(1+s_0)y_{j-1}<t\le y_j,
\]
and \(0\le w_j(t)\le C/\log t\) on its whole support.
Thus the weights agree, up to the stated relative error,
outside the endpoint strips
\[
[y_{j-1},(1+s_0)y_{j-1}]
\quad\text{and}\quad
[y_j,(1+s_0)y_j].
\]

By orthogonality \eqref{eq:orth},
\[
\E\Psi(x/t,y_{j-1})^2
\le\frac xt
\quad \text{and} \quad
\E\bigl(\Psi(x/t,t)-\Psi(x/t,y_{j-1})\bigr)^2
\le\frac xt.
\]

On the interior
\(
J_j=((1+s_0)y_{j-1},y_j],
\)
the prime-weight estimate gives
\[
\bigg|w_j(t)-\frac1{\log t}\bigg|
\le\frac{C(\log Y)^{-30}}{\log t}.
\]
Since the weights are deterministic, Tonelli's theorem
and the first second-moment bound yield
\[
\E\bigg[
\frac1x\int_{J_j}
\Psi(x/t,y_{j-1})^2
\bigg|w_j(t)-\frac1{\log t}\bigg|\dd t
\bigg]\le
C(\log Y)^{-30}\int_{J_j}\frac{\dd t}{t\log t}.
\]
The same estimate holds for the discarded integrand,
by its second-moment bound above.

Summing over the layers and enlarging \(J_j\) to
\((y_{j-1},y_j]\), we bound the expected total interior
error by
\[
C(\log Y)^{-30}
\sum_{j=1}^K\int_{y_{j-1}}^{y_j}\frac{\dd t}{t\log t}
=
C(\log Y)^{-30} \log\frac{\log y_K}{\log y_0} =
O((\log Y)^{-30}\log_3Y).\]

Each endpoint strip has logarithmic length at most \(s_0\),
and \(\log t\ge a_0\). Their total contribution is at most
\(CKs_0/a_0\). Hence
\[
\E\bigg[
\sum_{j=1}^K
\bigg|\frac{\overline V_j(x)}x-\mathcal V_j(x)\bigg|
+\bigg|\frac{\overline D(x)}x-\mathcal V_D(x)\bigg|
\bigg]\ll
(\log Y)^{-30}\log_3Y+\frac{Ks_0}{a_0}.
\]
Combining this with \eqref{eq:vvdd}, and using \(x\ge Y\), gives
\[
\E\bigg[
\sum_{j=1}^K\bigg|\frac{V_j(x)}x-\mathcal V_j(x)\bigg|
+\bigg|\frac{D(x)}x-\mathcal V_D(x)\bigg|
\bigg]
\ll(\log Y)^{-19}.
\]
The sum of maxima in \eqref{eq:replaceprimes} is bounded
by the sum of these nonnegative errors over all endpoints.
Its expectation is at most \(C(\log Y)^{-18}\), since
\(|\mathcal X_Y|\le\log Y\).
Markov's inequality at \((\log Y)^{-2}\) proves
\eqref{eq:replaceprimes}, with exceptional probability
\(O((\log Y)^{-16})\).

\medskip\noindent
{\sc Step 3.}
All suprema in the rest of the proof are over
\(Y\le x\le Y_+\). The change of variables \(u=x/t\)
in \eqref{eq:intV} gives
\begin{equation}\label{eq:Vchange}
\mathcal V_j(x)
=\int_{x/y_j}^{x/y_{j-1}}
\frac{\Psi(u,y_{j-1})^2}{u^2\log(x/u)}\dd u.
\end{equation}
The symmetric difference of the integration intervals at
\(x\) and \(Y\) is contained in
\[
\mathscr B_j
=[Y/y_j,Y_+/y_j]
\cup[Y/y_{j-1},Y_+/y_{j-1}].
\]
On either integration interval, \(\log(x/u)\ge a_0 = \log y_0\).
Therefore, using \eqref{eq:smoothsecond}, the expected
contribution of these moving endpoints is at most
\[
\frac1{a_0}\int_{\mathscr B_j}
\frac{\E\Psi(u,y_{j-1})^2}{u^2}\dd u
\le\frac1{a_0}\int_{\mathscr B_j}\frac{\dd u}{u}
\le\frac{2\log(Y_+/Y)}{a_0}
=2(\log_2Y)^{-98}.
\]
The sums vanish for \(u<1\), so this estimate also covers
intervals crossing one.

On the common part of the integration intervals, put
\(r=\log(x/Y)\). Then \(0\le r\le\log(Y_+/Y)\) and
\[
\bigg|\frac1{\log(x/u)}-\frac1{\log(Y/u)}\bigg|
=\frac{r}{\log(x/u)\log(Y/u)}
\le\frac{\log(Y_+/Y)}{a_0\log(Y/u)}.
\]
The resulting error is bounded by
\(\log(Y_+/Y)\mathcal V_j(Y)/a_0\).
Moreover, by \eqref{eq:smoothsecond}
\[
\E\mathcal V_j(Y)
\le\frac1{a_0}\int_{Y/y_j}^{Y/y_{j-1}}
\frac{\dd u}{u}
\le\frac{2\log Y}{a_0}
=2(\log_2Y)^2.
\]
Since \(\log(Y_+/Y)/a_0=(\log_2Y)^{-98}\), we conclude that
\[
\E\sup_{Y\le x\le Y_+}
|\mathcal V_j(x)-\mathcal V_j(Y)|
\le C(\log_2Y)^{-96}.
\]
Summing over the layers gives
\begin{equation}\label{eq:vvariation}
\E\sum_{j=1}^K\sup_{Y\le x\le Y_+}
|\mathcal V_j(x)-\mathcal V_j(Y)|
\ll K(\log_2Y)^{-96}.
\end{equation}

\medskip\noindent
{\sc Step 4.}
For \(t\in I_j\), put \(y_-(t)=y_{j-1}\), and define
\[
g_x(u)=
\begin{cases}
\displaystyle
\frac{\Psi(u,x/u)-\Psi(u,y_-(x/u))}
{u\sqrt{\log(x/u)}},
&y_0<x/u\le y_K,\\[6pt]
0,&\text{otherwise}.
\end{cases}
\]
The same change of variables as above gives
\[
\mathcal V_D(x)=\int_0^\infty|g_x(u)|^2\dd u
=\|g_x\|_2^2.
\]
We first estimate the change in \(g_x\) in squared
\(L^2\)-norm. Let
\[
\mathscr B=\bigcup_{j=0}^K[Y/y_j,Y_+/y_j].
\]
These are precisely the intervals in which a cutoff \(x/u\)
can cross a layer boundary as \(x\) varies from \(Y\) to
\(Y_+\). If \(u\notin\mathscr B\), either \(g_x(u)=0\)
for every \(x\), or all the cutoffs \(x/u\) lie in the
same layer. In the latter case, \(y_-(x/u)\) is constant.

Put \(t_0=Y/u\) in this latter case. Adding and subtracting
the numerator at \(Y\), with the denominator at \(x\), gives
\[
g_x(u)-g_Y(u)
={}\frac{\Psi(u,x/u)-\Psi(u,t_0)}
{u\sqrt{\log(x/u)}}
+\frac{\Psi(u,t_0)-\Psi(u,y_-(t_0))}{u}
\bigg(\frac1{\sqrt{\log(x/u)}}
-\frac1{\sqrt{\log t_0}}\bigg).
\]
We estimate these two terms separately.

For the first term, \eqref{eq:incrementprime} together with
\eqref{eq:primesosc} at frequency zero,
uniformly
for the relevant \(y_0\le t_0\le y_K\), gives
\[
\E\sup_{t_0\le v\le(Y_+/Y)t_0}
|\Psi(u,v)-\Psi(u,t_0)|^2
\le4u\sum_{t_0<p\le(Y_+/Y)t_0}\frac1p
\le\frac{Cu\log(Y_+/Y)}{\log t_0}.
\]
Since \(\log(x/u)\ge\log t_0\ge a_0\), the expected
integral of the squared first term, with the supremum
over \(x\) inside the integral, is at most
\[
C\log(Y_+/Y)
\int_{Y/y_K}^{Y/y_0}
\frac{\dd u}{u(\log(Y/u))^2}
\le\frac{C\log(Y_+/Y)}{a_0^2}
\log\frac{y_K}{y_0}
\le C(\log_2Y)^{-96}.
\]

For the second term, the mean value theorem gives
\[
\bigg|\frac1{\sqrt{\log(x/u)}}
-\frac1{\sqrt{\log t_0}}\bigg|
\le
\frac{\log(Y_+/Y)}{2a_0\sqrt{\log t_0}}.
\]
Its absolute value is therefore at most
\(\log(Y_+/Y)|g_Y(u)|/(2a_0)\).
Orthogonality gives
\[
\E\|g_Y\|_2^2
\le\int_{Y/y_K}^{Y/y_0}
\frac{\dd u}{u\log(Y/u)}\le\frac{2\log Y}{a_0}
=2(\log_2Y)^2.
\]
Thus the squared second term contributes at most
\[
C\bigg(\frac{\log(Y_+/Y)}{a_0}\bigg)^2
\E\|g_Y\|_2^2
\ll(\log_2Y)^{-194}.
\]

It remains to treat \(u\in\mathscr B\), where a layer
boundary or a support boundary may be crossed. Whenever
\(g_x(u)\ne0\), its denominator is at least
\(u\sqrt{a_0}\), and its numerator is at most
\(2\sup_{v\ge1}|\Psi(u,v)|\) in absolute value.
Hence
\[
\sup_{Y\le x\le Y_+}|g_x(u)-g_Y(u)|^2
\le\frac{16}{u^2a_0}
\sup_{v\ge1}|\Psi(u,v)|^2.
\]
By \eqref{eq:smoothmaxprime}, its expected integral over
\(\mathscr B\) is bounded by
\[
\frac{C}{a_0}\int_{\mathscr B}\frac{\dd u}{u}
\le\frac{C(K+1)\log(Y_+/Y)}{a_0}
\ll K(\log_2Y)^{-98}.
\]
Combining the estimates inside and outside \(\mathscr B\),
and using \(|a+b|^2\le2|a|^2+2|b|^2\), we obtain
\begin{equation}\label{eq:gvariation}
\E\int_0^\infty
\sup_{Y\le x\le Y_+}|g_x(u)-g_Y(u)|^2\dd u
\ll K(\log_2Y)^{-96}.
\end{equation}

Set
\[
A_Y=\int_0^\infty
\sup_{Y\le x\le Y_+}|g_x(u)-g_Y(u)|^2\dd u.
\]
For every \(x\in[Y,Y_+]\), we have
\(\|g_x-g_Y\|_2\le\sqrt{A_Y}\). Therefore,
\[
\sup_{Y\le x\le Y_+}|\mathcal V_D(x)-\mathcal V_D(Y)|
=\sup_{Y\le x\le Y_+}
\big|\|g_x\|_2^2-\|g_Y\|_2^2\big|
\le2\|g_Y\|_2\sqrt{A_Y}+A_Y.
\]
Taking expectations and applying Cauchy--Schwarz gives
\[
\E\sup_{Y\le x\le Y_+}|\mathcal V_D(x)-\mathcal V_D(Y)|
\le2\sqrt{\E\|g_Y\|_2^2}\sqrt{\E A_Y}+\E A_Y
\ll\sqrt K\,(\log_2Y)^{-47}
+K(\log_2Y)^{-96}.
\]
The ratio of the second term to the first is
\(\sqrt K\,(\log_2Y)^{-49}\), which tends to zero. Therefore we have proved
\begin{equation}\label{eq:dvariation}
\E\sup_{Y\le x\le Y_+}|\mathcal V_D(x)-\mathcal V_D(Y)|
\le C \sqrt K\,(\log_2Y)^{-47}.
\end{equation}

\medskip\noindent
{\sc Step 5.}
By \eqref{eq:vvariation}, \eqref{eq:dvariation}, and
Markov's inequality, with probability \(1-o(1)\),
\[
\sum_{j=1}^K\sup_{Y\le x\le Y_+}
|\mathcal V_j(x)-\mathcal V_j(Y)|
+\sup_{Y\le x\le Y_+}|\mathcal V_D(x)-\mathcal V_D(Y)|
\le\tfrac12(\log_2Y)^{-10}.
\]
Intersect this event with the event in
\eqref{eq:replaceprimes}. Since \(Y\in\mathcal X_Y\),
the triangle inequality gives
\[
\begin{split}
\sum_{j=1}^K\max_{x\in\mathcal X_Y}
\bigg|\frac{V_j(x)}x-\frac{V_j(Y)}Y\bigg|
& \le
2\sum_{j=1}^K\max_{x\in\mathcal X_Y}
\bigg|\frac{V_j(x)}x-\mathcal V_j(x)\bigg|
+\sum_{j=1}^K\sup_{Y\le x\le Y_+}
|\mathcal V_j(x)-\mathcal V_j(Y)|\\
& \le 2(\log Y)^{-2}+\tfrac12(\log_2Y)^{-10}
\le(\log_2Y)^{-10}
\end{split}
\]
for all sufficiently large \(Y\). The same argument yields
\[
\max_{x\in\mathcal X_Y}
\bigg|\frac{D(x)}x-\frac{D(Y)}Y\bigg|
\le
2\max_{x\in\mathcal X_Y}
\bigg|\frac{D(x)}x-\mathcal V_D(x)\bigg|
+\sup_{Y\le x\le Y_+}|\mathcal V_D(x)-\mathcal V_D(Y)|
\le(\log_2Y)^{-10}.
\]
This proves \eqref{eq:flatV} and \eqref{eq:flatD}
simultaneously.

Finally, Proposition~\ref{prop:collision} gives
\[
\frac{D(Y)}Y\le(\log_2Y)^{-53/100}
\]
with probability \(1-o(1)\). On the intersection of this
event with the events already obtained,
\[
\max_{x\in\mathcal X_Y}\frac{D(x)}x
\le\frac{D(Y)}Y
+\max_{x\in\mathcal X_Y}
\bigg|\frac{D(x)}x-\frac{D(Y)}Y\bigg|
\le2(\log_2Y)^{-53/100}.
\]
This proves \eqref{eq:uniformcollision}. All the events
used above have probabilities tending to one, so their
intersection has probability tending to one as well.
\end{proof}


\subsection{Proposition \ref{prop:goodlayers}: the retained layers}
Recall that, for \(p\in I_j=(y_{j-1},y_j]\),
\[
b_p(x)=\Psi(x/p,y_{j-1})
=\sum_{\substack{n\le x/p\\P(n)\le y_{j-1}}}f(n),
\qquad
V_j(x)=\sum_{p\in I_j}b_p(x)^2.
\]
The coefficients \(b_p(x)\) are
\(\FF_{y_{j-1}}\)-measurable. Thus \(V_j(x)\) is the
conditional variance, given \(\FF_{y_{j-1}}\), of the layer sum
\[
R_j(x)=\sum_{p\in I_j}\varepsilon_pb_p(x).
\]
Recall also

\begin{equation}\label{eq:active}
v_j=\max_{x\in\mathcal X_Y}\frac{V_j(x)}x,
\qquad
\mathcal J=\{j:v_j\ge(\log_2Y)^{-59/100}\},
\qquad
q=\sum_{j\in\mathcal J}v_j.
\end{equation}

\begin{proposition}\label{prop:goodlayers}
There is a deterministic constant \(c>0\) such that, with
probability \(1-o(1)\) as \(Y\to\infty\), the following
bounds hold simultaneously:
\begin{align}
c(\log_2Y)^{-51/100}&\le q
\le 2(\log_2Y),
\label{eq:qbounds}\\
(1-(\log_2Y)^{-1})v_j
&\le\frac{V_j(x)}x
\le v_j
\qquad(j\in\mathcal J,\ x\in\mathcal X_Y),
\label{eq:activeflat}\\
\max_{\substack{j\in\mathcal J,\ p\in I_j\\x\in\mathcal X_Y}}
\frac{|b_p(x)|}{\sqrt{V_j(x)}}
&\le\beta_Y
:=\exp\bigg(-\frac{\log Y}{3(\log_2Y)^2}\bigg).
\label{eq:smallcoeff}
\end{align}
\end{proposition}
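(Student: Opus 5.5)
The proof proceeds in three parts: the active-layer flatness \eqref{eq:activeflat}, the coefficient bound \eqref{eq:smallcoeff}, and the two-sided bound \eqref{eq:qbounds} on \(q\). The first two are soft consequences of earlier estimates; the lower bound on \(q\) is the real content.

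\textbf{Flatness of the active layers.} The upper inequality in \eqref{eq:activeflat} is just the definition of \(v_j\). For the lower one, work on the event of Proposition~\ref{prop:flatness}. For each \(j\), \eqref{eq:flatV} gives
\[
\max_{x,z\in\mathcal X_Y}\Bigl|\frac{V_j(x)}{x}-\frac{V_j(z)}{z}\Bigr|\le 2(\log_2Y)^{-10}.
\]
Hence \(V_j(x)/x\ge v_j-2(\log_2Y)^{-10}\) for every \(x\in\mathcal X_Y\). For \(j\in\mathcal J\) we have \(v_j\ge(\log_2Y)^{-59/100}\), so \(2(\log_2Y)^{-10}\le(\log_2Y)^{-1}v_j\) for large \(Y\). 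This yields \eqref{eq:activeflat} on an event of probability \(1-o(1)\).

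\textbf{Small coefficients.} For \(p\in I_j\) and \(x\in\mathcal X_Y\),
\[
|b_p(x)|^2\le\sup_{u\le U_j}\Psi(u,y_{j-1})^2,\qquad U_j=Y_+/y_{j-1}.
\]
The Menshov--Rademacher inequality (as in \eqref{eq:menshov}, now for the scalar orthonormal family \(f(n)\), \(P(n)\le y_{j-1}\), listed in increasing order) gives \(\E\sup_{u\le U_j}\Psi(u,y_{j-1})^2\le CU_j(\log U_j)^2\). On the event \eqref{eq:activeflat} with \(j\in\mathcal J\), we have \(V_j(x)\ge\tfrac12 Y(\log_2Y)^{-59/100}\). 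Since \(\beta_Y^2=y_0^{-2/3}\le y_{j-1}^{-2/3}\), Markov's inequality bounds the probability that \eqref{eq:smallcoeff} fails in layer \(j\) by
\[
C\,\frac{(Y_+/y_{j-1})(\log Y)^2(\log_2Y)}{\beta_Y^2\,Y}\ll Y^{(\log_2Y)^{-100}}(\log Y)^3\,y_0^{-1/3}.
\]
A union over the \(K\) layers keeps this \(o(1)\), because \(y_0^{-1/3}\) decays faster than any power of \(\log Y\) while \(Y^{(\log_2Y)^{-100}}=\exp(\log Y/(\log_2Y)^{100})\) is smaller than \(y_0^{1/6}\). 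The maximum over \(p\) and \(x\) is already absorbed into the supremum over \(u\), so no further union is needed.

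\textbf{Upper bound for \(q\).} On the event of \eqref{eq:flatV},
\[
q\le\sum_{j=1}^Kv_j\le\sum_{j=1}^K\frac{V_j(Y)}{Y}+(\log_2Y)^{-10}.
\]
The largest-prime injectivity argument gives \(\E\sum_jV_j(Y)\le Y\). Markov's inequality then gives \(\sum_jV_j(Y)/Y\le\log_2Y\) with probability \(1-(\log_2Y)^{-1}\), which proves \(q\le2\log_2Y\).

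\textbf{Lower bound for \(q\): the main obstacle.} The comparison in Step~2 of the lower-bound proof does not use the lower bound on \(q\). It gives
\[
\|a(Y)-a_*(Y)\|_2^2\le CY(\log_2Y)^{-53/100}\quad\text{and}\quad \|a_*(Y)\|_2^2\le Yq.
\]
So it suffices to show \(Q(Y)/Y\ge(\log_2Y)^{-1/2-1/200}\) with probability \(1-o(1)\); then \(\sqrt q\ge\sqrt{Q(Y)/Y}-C(\log_2Y)^{-53/200}\) gives the claim. Lemma~\ref{dn:limits} only supplies tightness from above, so this small-ball lower estimate is the step I expect to be hardest.

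To obtain it, I would restrict \(Q(Y)\) to primes in a top range \(p\in(Y^{1/3},Y^{1/2}]\), say, and condition on \(\FF_y\) with \(y=Y^{1/(\log_2Y)^4}\). Using the rough/smooth factorisation exactly as in \eqref{eq:roughfactor}, together with a lower sieve bound for \(y\)-rough integers in long intervals, this gives
\[
\E[Q(Y)\mid\FF_y]\ge cY\,\frac{\mathcal E_y}{\log y}-(\text{error}).
\]
I would then need a lower counterpart of Lemma~\ref{lem:energyupper}, namely \(\mathcal E_y/\log y\ge(\log_2y)^{-1/2-a}\) with probability \(1-o(1)\). Via \eqref{eq:parsevalenergy}, this reduces to \(|F_y(t)|^2\) not being atypically small on a set of \(t\) of size \(\gg1\); I would try to import this from the lower estimates for critical chaos in \cite{BIRestricted,HarperLow}.

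Finally, to pass from the conditional mean to \(Q(Y)\) itself, I would use a conditional second-moment (Paley--Zygmund) argument. Since the layer sums are quadratic forms in independent signs, hypercontractivity as in Lemma~\ref{lem:fourth} should control their conditional variance.
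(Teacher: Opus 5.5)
Your treatment of \eqref{eq:activeflat}, \eqref{eq:smallcoeff} and the upper bound in \eqref{eq:qbounds} is correct and essentially the paper's. There is one small slip: $\beta_Y^2=y_0^{-2/3}\ge y_{j-1}^{-2/3}$, not $\le$. What you actually use, $y_{j-1}^{-1}\beta_Y^{-2}\le y_0^{-1/3}$, is true. Your reduction of the lower bound on $q$ to the estimate $Q(Y)/Y\ge(\log_2Y)^{-1/2-1/200}$ with probability $1-o(1)$ is also valid. It is not circular, since \eqref{eq:small_variance} rests only on Proposition~\ref{prop:flatness}, Lemma~\ref{lem:rankin} and the definition of $\mathcal J$.

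\textbf{The gap.} This lower-tail estimate is the entire content of the lower bound, and it is not proved; the route you sketch would not prove it, for two reasons.

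\emph{The energy lower bound.} Reducing $\mathcal E_y/\log y\ge(\log_2y)^{-1/2-a}$ to $|F_y(t)|^2$ ``not being atypically small on a set of $t$ of size $\gg1$'' fails at this scale. For fixed $|t|\asymp1$, $\log|F_y(t)|^2$ is roughly Gaussian with mean $O(1)$ and variance $\asymp\log_2y$, so $|F_y(t)|^2$ is typically $e^{O(\sqrt{\log_2y})}$. Hence a lower bound for $|F_y(t)|^2$ on a set of $t$ of measure $\gg1$ can be at most $e^{C\sqrt{\log_2y}}$, far below the required $\log y\,(\log_2y)^{-1/2-a}$. At the critical scale the mass is carried by a sparse set of near-critical thick points, so what is needed is a genuine lower-tail bound for critical chaos. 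The results of \cite{HarperLow,BIRestricted} invoked in this paper do not supply one: they are moment bounds, barrier (upper) estimates, or the far weaker $Q(x)\ge x(\log Y)^{-b}$ of Lemma~\ref{lem:clocklower}.

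\emph{The transfer to $Q(Y)$.} Paley--Zygmund gives probability $1-o(1)$ only if $\E[Q(Y)^2\mid\FF_y]/\E[Q(Y)\mid\FF_y]^2\to1$, that is, conditional concentration. The hypercontractive bound behind Lemma~\ref{lem:fourth} carries factors $3^{\omega(n)}$. The $y$-rough parts here have unboundedly many prime factors (up to $(\log_2Y)^4$), so it controls that ratio at best by a power of $\log_2Y$.

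\textbf{The missing ingredient} is Harper's high-probability lower bound \cite[Proposition~1]{HarperLarge}. The paper observes that for $Y^{3/4}<p\le Y$ in layer $I_j$ one has $y_{j-1}>Y^{1/2}\ge Y/p$. Hence $b_p(Y)=M(Y/p)$ and $\sum_jV_j(Y)\ge\sum_{Y^{3/4}<p\le Y}M(Y/p)^2$. Harper's proposition with $A=Y^{3/4}$, $z=Y$, $W=\log_3Y/220$ bounds the right side below by $c_1Y(\log_2Y)^{-51/100}$, outside an event of probability $O((\log_2Y)^{-1/2200})$. Subtracting the inactive layers, $\sum_{j\notin\mathcal J}v_j\le K(\log_2Y)^{-59/100}$, gives the lower bound in \eqref{eq:qbounds} directly.

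The same input also completes your version. Indeed $Q(Y)\ge\sum_{Y^{3/4}<p\le Y}M(Y/p)^2$, and $(\log_2Y)^{-53/200}=o\bigl((\log_2Y)^{-51/200}\bigr)$, so your comparison still yields $q\ge c(\log_2Y)^{-51/100}$.
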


\begin{lemma}\label{lem:smoothmaxx}
For \(y\ge1\) and \(U\ge2\),
\begin{equation}\label{eq:smoothmaxargument}
\E\max_{0\le u\le U}|\Psi(u,y)|^2
\le CU(1+\log U)^2,
\end{equation}
where \(C>0\) is an absolute constant.
\end{lemma}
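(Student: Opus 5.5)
The plan is a dyadic-blocking argument: on each dyadic range of \(u\) apply a maximal inequality for orthonormal systems, then sum the ranges. For \(u<1\) we have \(\Psi(u,y)=0\), so only \(1\le u\le U\) matters. Fix \(k\ge0\) with \(2^k\le U\), and set \(B_k=[2^k,2^{k+1}]\cap[1,U]\), a range of length \(L_k\le 2^k\). On \(B_k\) write
\[
\Psi(u,y)=\Psi(2^k,y)+\sum_{\substack{2^k<n\le u\\P(n)\le y}}f(n).
\]
The summands \(f(n)\), for squarefree \(n\) with \(P(n)\le y\), are orthonormal by \eqref{eq:orth}. They are naturally ordered by \(n\), and the partial sums in \(u\) are exactly the partial sums of this ordered orthonormal family.

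The Menshov--Rademacher maximal inequality (the scalar case of \cite[Lemma~1, (11)]{MM}, already used in the proof of Lemma~\ref{lem:smallprimes}) then gives
\[
\E\max_{u\in B_k}\Big|\sum_{\substack{2^k<n\le u\\P(n)\le y}}f(n)\Big|^2\le C(\log(2\cdot 2^k))^2\,2^k\le C2^k(1+\log U)^2 .
\]
Also \(\E\Psi(2^k,y)^2\le 2^k\) by \eqref{eq:smoothsecond}. Using \((a+b)^2\le2a^2+2b^2\), we get \(\E\max_{u\in B_k}|\Psi(u,y)|^2\le C2^k(1+\log U)^2\).

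Finally, bound the maximum over \([1,U]\) by the sum over \(k\le\log_2U\) of the blockwise maxima. Since \(\sum_{2^k\le U}2^k\le 2U\), this yields \(\E\max_{0\le u\le U}|\Psi(u,y)|^2\le CU(1+\log U)^2\), with \(C\) independent of \(y\).

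An alternative is to apply Menshov--Rademacher directly to all squarefree \(n\le U\) with \(P(n)\le y\). There are at most \(U\) of them, so this gives \(C(\log 2U)^2U\) in one step with no dyadic splitting. I would present this shorter version and keep the dyadic argument only as a remark.

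I expect no genuine obstacle. The only points to check are that the family is orthonormal for each fixed \(y\), which is \eqref{eq:orth} restricted to \(y\)-smooth squarefree integers, and that the constant is uniform in \(y\). Uniformity holds because the number of terms is bounded by \(U\) regardless of \(y\), and \((\log 2U)^2\le C(1+\log U)^2\) for \(U\ge2\).
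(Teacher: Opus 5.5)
Your proposal is correct, and the version you say you would present is exactly the paper's proof: a single application of the Menshov--Rademacher maximal inequality to the orthonormal family $(f(n))$ indexed by the $y$-smooth squarefree $n\le U$, which has at most $U$ terms. The dyadic blocking argument is also valid but unnecessary, so keeping it only as a remark is reasonable.
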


\begin{proof}
Apply the Menshov--Rademacher maximal inequality
\cite[Lemma~1, (11), p.~333]{MM} to the orthogonal sequence
\((f(n)\1_{\{P(n)\le y\}})_{1\le n\le\lfloor U\rfloor}\),
omitting its zero terms. By \eqref{eq:orth},
\[
\E\max_{0\le u\le U}|\Psi(u,y)|^2
\le C(1+\log U)^2
\sum_{\substack{n\le U\\P(n)\le y}}\E f(n)^2
\le CU(1+\log U)^2.
\]
\end{proof}

\begin{proof}[Proof of Proposition~\ref{prop:goodlayers}]
We use the event in Proposition~\ref{prop:flatness}, on which
\eqref{eq:flatV} holds. Its probability tends to one.

\medskip\noindent
{\sc Step 1.}
For sufficiently large \(Y\), we have \(y_0<Y^{3/4}\) and
\(y_K\ge Y\). Hence every prime \(Y^{3/4}<p\le Y\) belongs
to one of the layers. If \(p\in I_j\), the identity
\(y_{j-1}=y_j^{e^{-\delta}}\) gives
\[
y_{j-1}\ge p^{e^{-\delta}}
>Y^{(3/4)e^{-\delta}}
>Y^{1/2}
\ge Y/p
\]
for all sufficiently large \(Y\), since
\(\delta=(\log_2Y)^{-1/20}\to0\).
Every integer at most \(Y/p\) then has all its prime factors
at most \(y_{j-1}\). Thus
\[
b_p(Y)=\Psi(Y/p,y_{j-1})=M(Y/p),
\]
and consequently
\[
\frac1Y\sum_{j=1}^K V_j(Y)
\ge\frac1Y\sum_{Y^{3/4}<p\le Y}M(Y/p)^2.
\]

We apply Harper's Proposition~1
\cite[Proposition~1]{HarperLarge} with
\(
A=Y^{3/4}, z=Y=A^{4/3},
W=\frac{\log_3Y}{220}.
\)
These parameters are admissible: \(A^{8/7}=Y^{6/7}\le Y\),
and \(1\le W\le(\log_2A)^{1/100}\) for all sufficiently
large \(Y\). That proposition supplies the deterministic
comparison and the probabilistic estimate which together give
\[
\frac1Y\sum_{j=1}^K V_j(Y)
\ge\frac{c_{\mathrm H}}{\log A}
\int_1^{A^{1/7}}\frac{M(u)^2}{u^2}\dd u
\ge\frac{c_{\mathrm H}e^{-2.2W}}{\sqrt{\log_2A}}
\]
outside an event of probability at most
\(C_{\mathrm H}e^{-0.1W}\).
Since
\(
\log_2A \sim \log_2Y\),
\( e^{-2.2W}=(\log_2Y)^{-1/100}\),
\(e^{-0.1W}=(\log_2Y)^{-1/2200}\),
there is a deterministic \(c_1>0\) such that
\begin{equation}\label{eq:retainedlower}
\PP\bigg(
\frac1Y\sum_{j=1}^K V_j(Y)
\ge c_1(\log_2Y)^{-51/100}
\bigg)
\ge1-C(\log_2Y)^{-1/2200}.
\end{equation}

The inactive layers satisfy the deterministic bound
\begin{equation}\label{eq:inactive}
\sum_{j\notin\mathcal J}v_j
\le K(\log_2Y)^{-59/100}
=o\bigl((\log_2Y)^{-51/100}\bigr).
\end{equation}
Since \(v_j\ge V_j(Y)/Y\),
\[
q
=\sum_{j=1}^K v_j-\sum_{j\notin\mathcal J}v_j
\ge\frac1Y\sum_{j=1}^K V_j(Y)
-\sum_{j\notin\mathcal J}v_j.
\]
Combining \eqref{eq:retainedlower} and \eqref{eq:inactive}
proves the lower bound in \eqref{eq:qbounds}, with
\(c=c_1/2\), for all sufficiently large \(Y\).

For the upper bound, orthogonality gives
\[
\E\sum_{j=1}^K V_j(Y)
=\sum_{j=1}^K\sum_{p\in I_j}
\sum_{\substack{n\le Y/p\\P(n)\le y_{j-1}}}\mu^2(n)
\le Y.
\]
Markov's inequality yields
\[
\PP\bigg(
\frac1Y\sum_{j=1}^K V_j(Y)>\log_2Y
\bigg)
\le\frac1{\log_2Y}.
\]
On the event \eqref{eq:flatV},
\[
0\le\sum_{j=1}^K v_j-\frac1Y\sum_{j=1}^K V_j(Y)
\le\sum_{j=1}^K\max_{x\in\mathcal X_Y}
\bigg|\frac{V_j(x)}x-\frac{V_j(Y)}Y\bigg|
\le(\log_2Y)^{-10}.
\]
Thus, outside an event of probability \(o(1)\),
\[
q\le\sum_{j=1}^K v_j
\le\log_2Y+(\log_2Y)^{-10}
\le 2(\log_2Y).
\]
This completes the proof of \eqref{eq:qbounds}.

\medskip\noindent
{\sc Step 2.}
Fix an active layer \(j\), and choose \(x_j\in\mathcal X_Y\)
such that \(v_j=V_j(x_j)/x_j\). Such a point exists because
the grid is finite. By \eqref{eq:flatV}, for every
\(x\in\mathcal X_Y\),
\[
0\le v_j-\frac{V_j(x)}x
\le\bigg|\frac{V_j(x_j)}{x_j}-\frac{V_j(Y)}Y\bigg|
+\bigg|\frac{V_j(x)}x-\frac{V_j(Y)}Y\bigg|
\le2(\log_2Y)^{-10}.
\]
Since the layer is active, the definition of \(\mathcal J\)
in \eqref{eq:active} gives
\(
v_j\ge(\log_2Y)^{-59/100}
\). Combining with the above inequality we obtain
\eqref{eq:activeflat} simultaneously for all
active layers and endpoints.

\medskip\noindent
{\sc Step 3.}
Put
\(
U={Y^{1+(\log_2Y)^{-100}}}/{y_0}.
\)
For every \(x\in\mathcal X_Y\) and \(p\in I_j\), we have
\(x/p\le U\). Also, \(U\ge2\) and \(\log U\le2\log Y\)
for sufficiently large \(Y\).
For each deterministic cutoff \(y_{j-1}\),
\eqref{eq:smoothmaxargument} and Markov's inequality give
\[
\PP\bigg(
\max_{0\le u\le U}|\Psi(u,y_{j-1})|
>\sqrt U\,(\log Y)^2
\bigg)\le
\frac{CU(1+\log U)^2}{U(\log Y)^4}
\ll(\log Y)^{-2}.
\]
A union bound over the \(K\) cutoffs shows that, outside
an event of probability \(O(K(\log Y)^{-2})=o(1)\),
\[
|b_p(x)|
=|\Psi(x/p,y_{j-1})|
\le\sqrt U\,(\log Y)^2
\]
simultaneously for all layers, their primes, and all endpoints.

On the event \eqref{eq:activeflat}, every active layer satisfies
\[
V_j(x)
\ge(1-(\log_2Y)^{-1})xv_j
\ge\tfrac12Y(\log_2Y)^{-59/100}
\]
for sufficiently large \(Y\). Therefore,
\[
\begin{split}
\frac{|b_p(x)|}{\sqrt{V_j(x)}}
&\le C(\log Y)^2(\log_2Y)^{59/200}\sqrt{\frac UY}\\
&=C(\log Y)^2(\log_2Y)^{59/200}
\exp\bigg(
-\frac{\log Y}{2(\log_2Y)^2}
+\frac{\log Y}{2(\log_2Y)^{100}}
\bigg)\\
&\le\exp\bigg(-\frac{\log Y}{3(\log_2Y)^2}\bigg),
\end{split}
\]
uniformly over the active layers and endpoints.
This is \eqref{eq:smallcoeff}.

Finally, the flatness event and the additional events used
in Steps~1 and~3 all have probabilities tending to one.
Their intersection proves all three assertions simultaneously.
\end{proof}


\subsection{Proposition \ref{prop:covariance}: covariance estimates}\label{app:covariance-proofs}

We estimate the covariances arising from a single prime layer.
For \(Y<Y'\le Y^2\), put
\[
C_Y(x,z)
=
\sum_{Y<p\le Y'}\Psi(x/p,Y)\Psi(z/p,Y).
\]
Conditional on \(\FF_Y\), this is the covariance of the sums
\[
\sum_{Y<p\le Y'}\varepsilon_p\Psi(x/p,Y)
\quad\text{and}\quad
\sum_{Y<p\le Y'}\varepsilon_p\Psi(z/p,Y).
\]
We shall show that, on a logarithmic grid, each endpoint has
only a small number of other endpoints with a large covariance.

\begin{proposition}\label{prop:covariance}
Let \(Y\) be sufficiently large and let \(Y<Y'\le Y^2\).
Let \(x_0\ge(\log Y)^9\) be deterministic, and let \(m\) be a
positive integer satisfying
\(
m\le\frac{\log Y}{(\log_2Y)^{10}}.
\)
Put
\(
\mathcal Z=\{x_0e^r:0\le r<m\}.
\)
For every fixed \(0<\theta<1\),

\begin{equation}\label{eq:covfailure}
\begin{split}
&\PP\biggl(
\max_{x\in\mathcal Z}
\#\biggl\{
z\in\mathcal Z\setminus\{x\}:
|C_Y(x,z)|>
\frac{\sqrt{xz}}{(\log_2Y)^{3/5}}
\biggr\}
>(\log Y)^\theta
\biggr)\ll_\theta
\frac{(\log_3Y)^4}{(\log_2Y)^{1/10}}.
\end{split}
\end{equation}

The bound is uniform in \(Y'\), \(x_0\), and \(m\) satisfying
the stated assumptions.
\end{proposition}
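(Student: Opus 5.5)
The plan is to count edges by a second-moment (or fractional-moment) argument on a good event where the Euler products are under control, and then apply Markov's inequality to the total number of edges. First I will express the covariance through its Mellin transform. Writing \(\Psi(x/p,Y)\) as a Perron-type integral, or using the Plancherel identity of Lemma~\ref{lem:energyupper}, I get, up to negligible boundary errors,
\[
\frac{C_Y(x,z)}{\sqrt{xz}}\approx\frac1{2\pi}\int_{\RR}\frac{|F_Y(t)|^2}{1/4+t^2}\Big(\sum_{Y<p\le Y'}p^{-1-i\cdot 0}\Big)(x/z)^{it}\,\dd t\cdot\frac{1}{\log(\cdot)}\,,
\]
or more precisely a smoothed version in which the prime sum over \((Y,Y']\) is replaced by its integral approximation via Lemma~\ref{lem:osc-prime-sum} and Lemma~\ref{lem:prime-averaging}. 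The key structural fact to extract is this: with \(r=\log(z/x)\), the normalized covariance behaves like a weighted integral of \(|F_Y(t)|^2/(1/4+t^2)\) against an oscillating factor \(e^{irt}\) times a kernel of size \(O(1/\log Y)\) (since \(\sum_{Y<p\le Y'}1/p\) over a range with \(\log Y'\le 2\log Y\) is \(O(1)\)). Here the mass \(\mathcal E_Y/\log Y\) is the quantity controlled in Lemma~\ref{lem:energyupper}.

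Second, I sum over pairs. I will bound \(\sum_{x\ne z\in\mathcal Z}|C_Y(x,z)|^2/(xz)\). Expanding the square yields a double integral in \(t,t'\) of \(|F_Y(t)|^2|F_Y(t')|^2\) against the Fejér-type kernel \(\sum_{r,r'}e^{i(r-r')(t-t')}\), which is of size \(\min(m^2,|t-t'|^{-2})\), summed appropriately. On the good event \(G_Y\) of \cite[Lemma~6.5]{BIRestricted} (probability \(1-O(1/\log_2Y)\)), I truncate to \(|t|\le(\log_2Y)^2\) and use the multiscale bound \(\mathcal D(t)\) on \(|F_Y|\). For \(|t-t'|\ge1\) the kernel is \(O(1)\) per unit, and the diagonal band \(|t-t'|\le 1\) contributes with \(|F_Y(t)|^2\) roughly constant over scale \(1/\log Y\). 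I expect \(\E[\1_{G_Y}\sum_{x\ne z}|C|^2/(xz)]\ll m(\log_3Y)^4(\log_2Y)^{-1/2}\) or better, by the same Tonelli computation as in \cite[Lemma~6.7]{BIRestricted}. The factor \(m\) then reflects that each row has \(O(1)\) effective mass. The separation \(r\ge1\) between grid points is what makes the off-diagonal terms small. The hypothesis \(x_0\ge(\log Y)^9\) ensures that the discretization error in replacing sums over \(u\le x/p\) by integrals is \(O(x^{-1/2+o(1)})\)-negligible. The condition \(m\le\log Y/(\log_2Y)^{10}\) ensures that \(|r|\le m\) stays below the scale \(\log Y\) at which the prime-sum kernel from Lemma~\ref{lem:osc-prime-sum} stops decaying.

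Third, I convert the second-moment bound into a degree bound. Each edge has \(|C|^2/(xz)>(\log_2Y)^{-6/5}\), so on \(G_Y\) the number of edges \(E\) satisfies \(\E[\1_{G_Y}E]\le (\log_2Y)^{6/5}\cdot m(\log_3Y)^4(\log_2Y)^{-1/2-\kappa}\). This alone bounds the average degree but not the maximum. To control the maximum I will use the translation structure instead. The row sum \(\sum_z|C(x,z)|^2/(xz)\) for fixed \(x\) is bounded, uniformly in \(x\), by \(\int|F_Y(t)|^4\)-type quantities, whose expectation is at most \((\log Y)^{\theta/2}\)-small relative to the threshold. Markov's inequality with a union bound over the \(m\le\log Y\) rows then costs a factor \(m\), which is absorbed by a higher moment. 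Concretely, I will take a \(k\)-th moment with \(k\approx 2/\theta\) of the row count, or equivalently Markov applied to \(\max_x\deg(x)^{k}\le\sum_x\deg(x)^k\).

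The main obstacle is this last step: obtaining a bound on \(\max_x\deg(x)\) of size \((\log Y)^\theta\) rather than an average-degree bound, with failure probability only \((\log_3Y)^4(\log_2Y)^{-1/10}\). I would first try the bound \(\deg(x)\le(\log_2Y)^{6/5}\sum_z|C(x,z)|^2/(xz)\). Bounding this row energy deterministically on \(G_Y\) by \((\log_2Y)^{O(1)}\max_{|t|\le(\log_2Y)^2}|F_Y(t)|^2/\log Y\cdot(\log_2Y)^{2}\), which is \(\le(\log_2Y)^{O(1)}\) on \(\mathcal D(t)\), gives a degree of at most \((\log_2Y)^{O(1)}\le(\log Y)^\theta\). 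The failure probability then comes only from \(G_Y^c\) and from the approximation errors, which should match the claimed rate.
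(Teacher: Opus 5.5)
There is a genuine gap, and it sits exactly at the step you flag as the main obstacle. The repair you propose does not work. On $\mathcal D(t)$ the barrier at $j=0$ only gives $|F_Y(t)|\le q_0(\log_2Y)^2=(\log Y)(\log_2Y)^2$. Hence $\max_{|t|\le(\log_2Y)^2}|F_Y(t)|^2/\log Y$ can be as large as $(\log Y)(\log_2Y)^4$, not $(\log_2Y)^{O(1)}$. Your deterministic row-energy bound is therefore $(\log Y)(\log_2Y)^{O(1)}$, which exceeds $m$, so the resulting degree bound is weaker than the trivial $N_x<m$.

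This is not a loose constant that could be tightened. At criticality $\max_{|t|\le1}|F_Y(t)|$ is heuristically of order $\log Y/(\log_2Y)^{3/4}$, well inside $\mathcal D$. In your own diagonal model, $C_Y(x,z)/\sqrt{xz}$ is the Fourier coefficient at the integer frequency $\log(z/x)$ of $\nu(\dd t)\approx|F_Y(t)|^2\dd t/((1/4+t^2)\log Y)$. Positivity of the Fej\'er kernel then gives $\sum_z|C_Y(x,z)|^2/(xz)\gg m\,\nu(I)^2$ for any interval $I$ of length at most $1/(2m)$. Taking $I$ of length $1/\log Y$ around such a peak gives row energy $\gg m(\log_2Y)^{-O(1)}$.

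Consequences:
\begin{itemize}
\item The expectation bound asserted in your second step fails for $m$ near $\log Y/(\log_2Y)^{10}$.
\item A second-moment (Chebyshev) count gives at best $N_x\ll m(\log_2Y)^{O(1)}$. The energy is spread over many coefficients below the threshold $(\log_2Y)^{-3/5}$, and no second moment can tell that apart from a large degree.
\item A fixed moment $k\approx2/\theta$ of $N_x$ does not help unless you can bound $\E N_x^k$ other than through powers of the row energy, and you do not supply such a bound.
\end{itemize}
Separately, the single-integral formula in your first step is not valid uniformly in $x_0$, which has no upper bound. It discards the phase $x^{i(v-t)}$ and the variation of $F_Y$ across the width of the kernel, and for $\log x\gg\log Y'$ it even misses the Rankin-type smallness of $C_Y(x,x)/x$. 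The paper instead keeps the near-diagonal double integral $I_{xz}$ over $|v-t|\le w=(\log_2Y)^4/\log Y$, with kernel $K_Y(v-t)=\sum_{Y<p\le Y'}p^{-1-i(v-t)}$ and the exact phases $x^{iv}z^{-it}$.

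What is missing is a moment of growing order combined with a sharper barrier. On the truncation event the paper uses $\max_xN_x\le(2(\log_2Y)^{3/5})^{2k}\max_x\sum_z|I_{xz}|^{2k}$ with $k=\lfloor\log_2Y/(B\log_3Y)\rfloor$. With this choice the threshold costs only $(\log_2Y)^{O(k)}=(\log Y)^{O(1/B)}$. It then proves
\[
\E\max_{x\in\mathcal Z}\sum_{z\in\mathcal Z}|I_{xz}|^{2k}\le(C_0k)^{C_0k}(\log_2Y)^{C_0k}\bigl((\log Y)^{2b}+(\log Y)(\log_2Y)^{-2Bk}\bigr).
\]
This rests on three ingredients.
\begin{itemize}
\item The barrier $\mathcal A(t)=\mathcal D(t)\cap\bigcap_{j\le(1-b)\log_2Y}\{|F_{j,Y}(t)|\le q_j(\log_2Y)^{-B}\}$. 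It caps exactly the fine-scale peaks that ruin your second moment and produces the factor $(\log_2Y)^{-2Bk}$. That factor beats the $\log Y$ in front only because $k\to\infty$.
\item After expanding the $2k$-th power, the grid sum is bounded by the Dirichlet kernel, $|\sum_{z\in\mathcal Z}z^{-iu}|\le D_m(u)$. Its supremum over $2kw$-neighbourhoods has integral $\ll\log_2Y$ over a period because $2kwm\le2(\log_2Y)^{-5}$. This is where $m\le\log Y/(\log_2Y)^{10}$ is used.
\item The joint Euler-product estimate of \cite[Lemma~6.10]{BIRestricted}.
\end{itemize}
With $b=\theta/8$ and $B$ large, Markov's inequality then costs $2(\log Y)^{-\theta/2}$.

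Finally, the rate $(\log_3Y)^4(\log_2Y)^{-1/10}$ does not come from $G_Y^c$, whose probability is only $O(1/\log_2Y)$. It comes from discarding $\mathcal D\setminus\mathcal A$ in the truncation step, via \cite[Lemma~6.7]{BIRestricted} at the two thresholds of \cite[(6.44)]{BIRestricted}.
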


\begin{proof}[Proof of Proposition~\ref{prop:covariance}]
We adapt the proof of \cite[Proposition~4.2, (4.3)]{BIRestricted}.
We check the changes required by the prime interval and the grid,
and retain the quantitative error bounds from that argument.

\medskip\noindent
{\sc Step 1.}
Put
\[
h=(\log Y)^8,\qquad
w=\frac{(\log_2Y)^4}{\log Y},\qquad
K_Y(u)=\sum_{Y<p\le Y'}p^{-1-iu}.
\]
For \(0\le j\le J_*:=\lfloor\log_2Y-1\rfloor\), put
\[
q_j=e^{-j}\log Y,\qquad
F_{j,Y}(t)=\prod_{p\le\exp(q_j)}
(1+\varepsilon_pp^{-1/2-it}),\qquad F_Y=F_{0,Y}.
\]
Fix \(B\ge1\) and \(0<b<1\), to be chosen in Step~3, and define
the barriers from \cite[(6.17)--(6.18)]{BIRestricted} at cutoff \(Y\):
\[
\begin{split}
\mathcal D(t)
&=\bigcap_{j=0}^{J_*}
\{|F_{j,Y}(t)|\le q_j(\log_2Y)^2\},\\
\mathcal A(t)
&=\mathcal D(t)\cap
\bigcap_{0\le j\le\lfloor(1-b)\log_2Y\rfloor}
\{|F_{j,Y}(t)|\le q_j(\log_2Y)^{-B}\}.
\end{split}
\]
The proofs of \cite[Lemmas~6.5, 6.7 and 6.10]{BIRestricted}
apply at every sufficiently large cutoff \(Y\): they use only
the scales \(q_j\) and the independent prime signs, with no
restriction on the sequence of cutoffs or on the endpoint grid.

Let
\[
\Delta=\{(v,t):(\log_2Y)^{-2}\le |v|,|t|\le(\log_2Y)^2,
\ |v-t|\le w\}
\]
and, for \(x,z\in\mathcal Z\), put
\[
I_{xz}=\frac1{4\pi^2}\iint_\Delta
\frac{F_Y(v)\overline{F_Y(t)}\,x^{iv}z^{-it}
\1_{\mathcal A(v)}\1_{\mathcal A(t)}K_Y(v-t)}
{(1/2+iv)(1/2-it)}\dd v\dd t.
\]
We claim that the event
\[
\mathcal T_Y=\biggl\{
\max_{x,z\in\mathcal Z}
\biggl|\frac{C_Y(x,z)}{\sqrt{xz}}-I_{xz}\biggr|
\le(\log_2Y)^{-5/8}\biggr\}
\]
satisfies
\begin{equation}\label{eq:B1-cited-truncation}
\PP(\mathcal T_Y^c)
\ll_{B,b}\frac{(\log_3Y)^4}{(\log_2Y)^{1/10}}.
\end{equation}

Here are the checks needed to apply the proof of
\cite[Lemmas~6.4 and 6.8]{BIRestricted}.
First, the finite expansion
\[
F_Y(t)=\sum_{\substack{n\ {\rm squarefree}\\P(n)\le Y}}
\frac{f(n)}{n^{1/2+it}}
\]
allows Perron's formula to be applied directly to \(\Psi(x/p,Y)\).
The squared truncation error in the proof of
\cite[Lemma~6.4, (6.11)--(6.14)]{BIRestricted} is bounded by
\[
\begin{split}
\sum_{Y<p\le Y'}\ \sum_{\substack{n\ {\rm squarefree}\\P(n)\le Y}}
\frac1{pn}\min\biggl\{1,
\frac1{h^2|\log(x/(pn))|^2}\biggr\}
\ll x^{-1}+h^{-1}+(\log Y)h^{-2}\ll h^{-1}.
\end{split}
\]
Indeed, \((p,n)\mapsto pn\) is injective, so the terms with
\(x/e\le pn\le ex\) are estimated by summing over integers;
the remaining terms are bounded by the Euler product
\(\prod_{p\le Y}(1+p^{-1})\ll\log Y\).
Since \(x\ge(\log Y)^9\) and \(m\le\log Y\), the union bounds
in \cite[(6.15)--(6.16)]{BIRestricted} give a common Perron
error \(O((\log Y)^{-1})\), outside an event of probability
\(O((\log Y)^{-1})\). This requires no upper bound on \(x\).

Second, \cite[Lemma~6.1 and (6.2)]{BIRestricted}, together
with \(Y'\le Y^2\), give
\[
|K_Y(u)|\ll\min\{1,((\log Y)|u|)^{-1}\},\quad |u|\le2h,
\qquad
\int_{-w}^w|K_Y(u)|\dd u\ll\frac{\log_3Y}{\log Y}.
\]
These are the kernel bounds used in the proof of
\cite[Lemma~6.8]{BIRestricted}. All endpoint phases have
modulus one, so its integral error bounds hold simultaneously
for our grid. To retain a rate, use the two thresholds in
\cite[(6.44)]{BIRestricted}. By
\cite[Lemma~6.7]{BIRestricted}, their combined exceptional
probability is
\(O_{B,b}((\log_3Y)^4(\log_2Y)^{-1/10})\).
The other exceptional probabilities in that proof are
\[
O\biggl((\log_2Y)^{-1/4}
+\frac{\log_3Y}{\log_2Y}+\frac1{\log Y}\biggr),
\]
which are absorbed by the same bound. This proves
\eqref{eq:B1-cited-truncation}.

\medskip\noindent
{\sc Step 2.}
We verify the grid conditions used in the proof of
\cite[Lemma~6.11]{BIRestricted}. Write
\(\operatorname{dist}(u,2\pi\mathbb Z)=\inf_{r\in\mathbb Z}|u-2\pi r|\) and put
\[
D_m(u)=\min\{m,C\operatorname{dist}(u,2\pi\mathbb Z)^{-1}\},\qquad
\mathscr K_{m,n}(u)=\sup_{|s-u|\le nw}D_m(s),
\]
where \(D_m(u)=m\) for \(u\in2\pi\mathbb Z\).
The geometric-series formula gives
\[
\biggl|\sum_{z\in\mathcal Z}z^{-iu}\biggr|
=\biggl|\sum_{r=0}^{m-1}e^{-iru}\biggr|
\le D_m(u).
\]
For \(n=2k\le2\log_2Y\), our assumption on \(m\) gives
\[
nwm\le2(\log_2Y)^{-5}.
\]
The periodic-kernel estimate \cite[(6.47)]{BIRestricted}
therefore yields
\[
\int_0^{2\pi}\mathscr K_{m,n}(u)\dd u
\ll 1+\log m+nwm\ll\log_2Y.
\]
Consequently,
\[
\sup_{\substack{J\subset\mathbb R\ {\rm interval}\\
|J|\le(\log_2Y)^2}}
\int_J\mathscr K_{m,n}(u)\dd u\ll(\log_2Y)^3,
\qquad \mathscr K_{m,n}(u)\le m\le\log Y.
\]
These replace precisely \cite[(6.59)--(6.60)]{BIRestricted}.
The factor containing \(x_0\) has modulus one and disappears
when the phase sum is bounded.
The remainder of the proof of
\cite[Lemma~6.11, (6.61)--(6.74)]{BIRestricted} uses only
these bounds, the kernel integral from Step~1, and the joint
Euler-product estimate of \cite[Lemma~6.10]{BIRestricted}.
It thus gives an absolute constant \(C_0\ge1\), independent
of \(B,b\), such that
\begin{equation}\label{eq:B1-cited-moment}
\E\max_{x\in\mathcal Z}\sum_{z\in\mathcal Z}|I_{xz}|^{2k}
\le(C_0k)^{C_0k}(\log_2Y)^{C_0k}
\bigl((\log Y)^{2b}
+(\log Y)(\log_2Y)^{-2Bk}\bigr)
\end{equation}
for every integer \(1\le k\le\lfloor\log_2Y\rfloor\).

\medskip\noindent
{\sc Step 3.}
Fix \(0<\theta<1\), put \(C_1=3C_0+10\), choose
\(b=\theta/8\) and a fixed \(B>16(C_1+1)/\theta\), and set
\[
k=\biggl\lfloor\frac{\log_2Y}{B\log_3Y}\biggr\rfloor.
\]
For sufficiently large \(Y\), the logarithm of the prefactor
in \eqref{eq:B1-cited-moment}, including
\((2(\log_2Y)^{3/5})^{2k}\), is at most
\(C_1k\log_3Y\).
Since \(k\log_3Y=(\log_2Y)/B+O(\log_3Y)\), the two terms
in that bound give, respectively, exponents
\[
2b+C_1/B+o(1)<\theta/2,\qquad
-1+C_1/B+o(1)<-1/2
\]
as powers of \(\log Y\). Hence
\[
\E\biggl[(2(\log_2Y)^{3/5})^{2k}
\max_{x\in\mathcal Z}\sum_{z\in\mathcal Z}|I_{xz}|^{2k}
\biggr]\le2(\log Y)^{\theta/2}.
\]
Put
\[
N_x=\#\{z\in\mathcal Z\setminus\{x\}:
|C_Y(x,z)|>(\log_2Y)^{-3/5}\sqrt{xz}\}.
\]
On \(\mathcal T_Y\), every pair counted by \(N_x\) satisfies
\[
|I_{xz}|>(\log_2Y)^{-3/5}-(\log_2Y)^{-5/8}
\ge\tfrac12(\log_2Y)^{-3/5}
\]
for sufficiently large \(Y\). Thus
\[
\max_xN_x\le(2(\log_2Y)^{3/5})^{2k}
\max_x\sum_z|I_{xz}|^{2k}
\qquad\text{on }\mathcal T_Y.
\]
Markov's inequality and \eqref{eq:B1-cited-truncation} give
\[
\PP\bigl(\max_xN_x>(\log Y)^\theta\bigr)
\le\PP(\mathcal T_Y^c)+2(\log Y)^{-\theta/2}
\ll_\theta\frac{(\log_3Y)^4}{(\log_2Y)^{1/10}}.
\]
Every estimate is uniform in \(Y'\), \(x_0\) and \(m\)
in the stated ranges. This completes the proof.
\end{proof}

\subsection{Lemma~\ref{lem:count}: a quantitative counting estimate}
The following lemma is a quantitative extension of
\cite[Lemma~4.4]{BIRestricted}.
\begin{lemma}\label{lem:count}
Fix \(0<\gamma<1/20\) and \(N>0\). Let \(Y\) be sufficiently
large, and let \(\mathcal X\) be a finite set with
\(|\mathcal X|\le\log Y\). Suppose that
\[
Z_x=\sum_{i=1}^d c_{i,x}\varepsilon_i,
\qquad x\in\mathcal X,
\]
where the coefficients are deterministic and real, and
\(\varepsilon_1,\ldots,\varepsilon_d\) are independent
Rademacher variables. Assume that

\[
\sum_{i=1}^d c_{i,x}^2=1,
\qquad
|c_{i,x}|\le\beta_Y
:=\exp\biggl\{-\frac{\log Y}{3(\log_2Y)^2}\biggr\}.
\]

Suppose further that there is a graph on \(\mathcal X\)
of maximum degree at most \((\log Y)^\gamma\) such that
\[
\E[Z_xZ_z]\le\rho_Y:=4(\log_2Y)^{-1/100}
\]
whenever \(x\) and \(z\) are distinct and nonadjacent.

There is a constant \(C_{\gamma,N}>0\) such that, uniformly
over all \(S\subset\mathcal X\) and
\(
u\ge C_{\gamma,N}\sqrt{\log_3Y}
\)
satisfying

\begin{equation}\label{eq:budget}
|S|\exp\biggl\{-\frac{(1+\gamma)u^2}{2}\biggr\}
\ge(\log Y)^{3\gamma},
\end{equation}
we have
\begin{equation}\label{eq:count}
\PP\biggl(
\#\{x\in S:Z_x\ge u\}
<
\frac{|S|}{8}
\exp\biggl\{-\frac{(1+\gamma)u^2}{2}\biggr\}
\biggr)
\ll_{\gamma,N}(\log_2Y)^{-N}.
\end{equation}

The same assertion holds with \(Z_x\) replaced by \(-Z_x\).
\end{lemma}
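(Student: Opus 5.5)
The plan is a second-moment (Paley--Zygmund/Chebyshev) argument for the count
\[
N_S=\#\{x\in S:Z_x\ge u\}=\sum_{x\in S}\1_{\{Z_x\ge u\}},
\]
upgraded to the polynomial rate $(\log_2Y)^{-N}$ by using higher moments of the centered count. The first ingredient is a one-point lower bound. Since $\sum_i c_{i,x}^2=1$ and $\max_i|c_{i,x}|\le\beta_Y$, which is far smaller than any power of $\log Y$, a moderate-deviation estimate for Rademacher sums with small coefficients applies: Cramér-type expansion, or Berry--Esseen after exponential tilting. For $u\le C\sqrt{\log_2 Y}$ it gives
\[
\PP(Z_x\ge u)\ge e^{-(1+\gamma/2)u^2/2}.
\]
Condition \eqref{eq:budget} together with $|S|\le\log Y$ forces $u^2\le 2\log_2Y/(1+\gamma)$, so this range suffices. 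Hence
\[
\E N_S\ge |S|e^{-(1+\gamma/2)u^2/2},
\]
which exceeds the target $\tfrac{|S|}{8}e^{-(1+\gamma)u^2/2}$ by the factor $8e^{\gamma u^2/4}\ge 8(\log_3 Y)^{\gamma C^2/4}$. This factor can be made as large as needed by choosing $C_{\gamma,N}$ large.

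The second ingredient is two-point decorrelation. For nonadjacent $x\ne z$ I will show
\[
\PP(Z_x\ge u,Z_z\ge u)\le(1+o(1))\,\PP(Z_x\ge u)\PP(Z_z\ge u)\,e^{\rho u^2(1+o(1))}.
\]
This follows from a bivariate moderate-deviation comparison with the Gaussian vector of correlation $\rho\le\rho_Y$: tilt by $e^{\lambda(Z_x+Z_z)}$ and use the small-coefficient bound for the two-dimensional local CLT. Since $\rho_Y u^2\le 8(\log_2Y)^{99/100}$, this is too crude for a plain second moment. Instead I will exploit that the budget only asks for the weaker exponent $(1+\gamma)$. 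Write $p=\min_x\PP(Z_x\ge u)$. The Gaussian bivariate tail with correlation $\rho$ is at most $p^{2/(1+\rho)}\le p^2e^{\rho u^2}$. With $\rho u^2=O((\log_2Y)^{99/100})$ versus the margin $\gamma u^2/4$, this is problematic only when $u^2\gg(\log_2Y)^{99/100}$. In that regime I will restrict to pairs, and note that the excess $e^{\rho u^2}$ is compensated because $|S|p\ge(\log Y)^{3\gamma}e^{\gamma u^2/4}$ dominates.

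Concretely, I will bound the variance by
\[
\Var N_S\le \E N_S\,(1+(\log Y)^\gamma)+|S|^2p^2(e^{\rho u^2}-1).
\]
The first term covers the diagonal and the adjacent pairs. It is small relative to $(\E N_S)^2$ because $\E N_S\ge(\log Y)^{3\gamma}$, which beats the degree $(\log Y)^{\gamma}$.

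To reach the rate $(\log_2Y)^{-N}$, I will compute the $2N$-th centered moment of $N_S$, or apply Janson-type inequalities over $N$-tuples. Tuples containing an edge are controlled by the degree bound. Pairwise weakly correlated tuples are controlled by the $k$-point version of the tilted comparison, with $k\le 2N$ fixed.

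The main obstacle is the decorrelation step at large $u$, where $\rho_Y u^2$ need not be small. There I will use the threshold slack between $(1+\gamma/2)$ and $(1+\gamma)$: the comparison only needs to hold up to a factor $e^{\gamma u^2/8}$, and this holds because $\rho_Y\to0$. The case of $-Z_x$ is identical by symmetry of the Rademacher signs.
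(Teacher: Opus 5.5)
\textbf{There is a genuine gap.} It lies in the step that turns your moment bounds into a high-probability lower bound for $N_S$.

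Your own two-point estimate gives nonadjacent covariances up to about $p^2(e^{\rho_Y u^2}-1)$. Summed over the $\asymp|S|^2$ nonadjacent pairs, this makes $\E N_S^2/(\E N_S)^2$ as large as roughly $e^{\rho_Y u^2}$, and this ratio does not depend on $|S|$. In the admissible range, $u^2$ can be close to $2(1-3\gamma)\log_2Y/(1+\gamma)$, so $\rho_Y u^2$ can be of order $(\log_2Y)^{99/100}$. Chebyshev then gives nothing. Paley--Zygmund gives only $\PP(N_S>\theta\E N_S)\ge(1-\theta)^2(\E N_S)^2/\E N_S^2\approx e^{-\rho_Y u^2}$, for every $\theta$.

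This is why the slack you invoke cannot be used. Lowering the target count from $\E N_S$ to $\frac{|S|}{8}e^{-(1+\gamma)u^2/2}$ does not enter this ratio. The largeness of $|S|p$ only controls the diagonal and adjacent terms. So the claim that ``the comparison only needs to hold up to a factor $e^{\gamma u^2/8}$'' is not correct.

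Higher centered moments are worse, not better. Take the toy model $Z_x=\sqrt{\rho}\,G+\sqrt{1-\rho}\,G_x$, which satisfies the covariance hypothesis. There $N_S\approx\E N_S\exp(\sqrt{\rho}\,uG-\rho u^2/2)$, and its $2k$-th moments relative to $(\E N_S)^{2k}$ are about $e^{(2k^2-k)\rho u^2}$. These moments are driven by the upper tail, even though the lower tail you need is tiny. Janson-type inequalities are also unavailable, since the events $\{Z_x\ge u\}$ are not monotone in the signs. Even formally, every pair is dependent, so they would face the same $\Delta/\mu^2\approx e^{\rho u^2}$ obstruction.

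\textbf{What is missing.} The toy model shows it: the lemma holds because $\log N_S$ fluctuates only on the scale $\sqrt{\rho}\,u=o(u^2)$. That is a concentration phenomenon, not a moment one, and the paper's proof supplies it as follows.
\begin{enumerate}
\item It colors the graph with at most $(\log Y)^\gamma+1$ colors. Each color class is split into disjoint groups $A$ of size $g=\lceil e^{(1+\gamma)u^2/2}\rceil$, so all pairs within a group are nonadjacent.
\item It uses the second moment only inside a group, at the raised level $t=(1+\gamma/4)u$. This gives the weak bound $\PP(\max_{x\in A}Z_x>t)\ge\mathcal R^{-1}$ with $\log\mathcal R=O(\rho_Y u^2+u+1)$.
\item The convex concentration inequality for the convex Lipschitz function $H_A=\max_{x\in A}Z_x$ places its median above $t-4\sqrt{\log(4\mathcal R)}\ge u+\gamma u/8$. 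This yields $\PP(H_A\le u)\le 4e^{-\gamma^2u^2/1024}\le 4(\log_2Y)^{-N-1}$ once $u\ge C_{\gamma,N}\sqrt{\log_3Y}$.
\item Markov's inequality on the number of failed groups gives at least $J/2\ge|S|/(4g)$ exceedances outside probability $O((\log_2Y)^{-N})$. No independence between groups is needed.
\end{enumerate}
In short, the slack must sit in the level ($t$ versus $u$), where concentration at scale $O(1)$ can exploit it, not in the count.
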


\begin{proof}

\medskip
\noindent
{\sc Step 1.}
A greedy coloring of the graph induced on \(S\) uses at most
\((\log Y)^\gamma+1\) colors. Within each color class,
form groups of exactly \(g = \lceil e^{(1+\gamma)u^2/2}\rceil \) vertices and discard the
remaining vertices.

Let \(J\) denote the total number of full groups. Since
fewer than \(g\) vertices are discarded from each color class,
\[
J\ge\frac{|S|}{g}-\bigl((\log Y)^\gamma+1\bigr).
\]
By \eqref{eq:budget},
\(
{|S|}/{g}\ge\frac12(\log Y)^{3\gamma}
\), therefore
for sufficiently large \(Y\)
\begin{equation}\label{eq:count-groups}
J\ge\frac{|S|}{2g}.
\end{equation}
Each group has \(g\le|S|\le\log Y\) vertices, and the
covariance of any two distinct coordinates in a group
is at most \(\rho_Y\).

\medskip
\noindent
{\sc Step 2.}
Fix one of the groups, denote it \(A\), and put
\[
H_A=\max_{x\in A}Z_x.
\]
We follow the proof of \cite[Lemma~4.4]{BIRestricted},
keeping the estimates needed for a quantitative bound on
\(\PP(H_A\le u)\). The statement of that lemma alone gives
convergence to zero, whereas here we need the rate in
\eqref{eq:count-one-group} below.

We first record the range of the threshold. Since
\(|S|\le\log Y\), taking logarithms in \eqref{eq:budget} gives
\[
\frac{(1+\gamma)u^2}{2}
\le\log|S|-3\gamma\log_2Y
\le(1-3\gamma)\log_2Y.
\]
Consequently,
\begin{equation}\label{eq:count-threshold-range}
C_{\gamma,N}^2\log_3Y\le u^2
\le\frac{2(1-3\gamma)}{1+\gamma}\log_2Y
\le2\log_2Y.
\end{equation}
Put
\( t=(1+\gamma/4)u, h=t+4.\)
The level \(t\) is slightly above \(u\). We shall first obtain
a lower bound for \(\PP(H_A>t)\), and then use concentration
to show that \(\PP(H_A\le u)\) is small.

\smallskip\noindent
\emph{(a).}
We apply the exponential-tilting argument leading to
\cite[(4.14)]{BIRestricted}. Indeed, with \(h=t+4\),
the coefficient assumptions and \eqref{eq:count-threshold-range}
give
\[
\sum_{i=1}^d c_{i,x}^4\le\beta_Y^2,
\qquad
\beta_Y^2h^4
\ll\beta_Y^2(\log_2Y)^2=o(1),
\]
uniformly in \(x\in A\) and the admissible thresholds.
Thus the expansions in \cite[(4.12)]{BIRestricted}
have uniform \(o(1)\) errors. The variance bound
\cite[(4.13)]{BIRestricted} holds without modification.
The same change of measure therefore yields, for an
absolute constant \(C>0\),
\begin{equation}\label{eq:count-individual-tail}
\min_{x\in A}\PP(Z_x>t)
\ge p_t:=\exp\{-t^2/2-7t-C\}.
\end{equation}

\smallskip\noindent
\emph{(b).}
The joint-tail estimate \cite[(4.15)]{BIRestricted} gives
\begin{equation}\label{eq:count-joint-tail}
\PP(Z_x>t,Z_z>t)
\le\exp\{-t^2/(1+\rho_Y)\},
\qquad x,z\in A,\quad x\ne z.
\end{equation}
Moreover, since \(g\ge e^{(1+\gamma)u^2/2}\) and
\(t=(1+\gamma/4)u\),
\[
\log(gp_t)
\ge\frac{\gamma u^2}{5}-7(1+\gamma/4)u-C
\longrightarrow\infty
\]
uniformly over the admissible thresholds. In particular,
\(gp_t\ge1\) for all sufficiently large \(Y\).

Applying the second-moment argument in
\cite[(4.16)]{BIRestricted} to
\(
W_A=\sum_{x\in A}\1_{\{Z_x>t\}},
\)
we obtain
\[
\PP(H_A>t)\ge\mathcal R^{-1},
\qquad
\mathcal R:=
\frac1{gp_t}
+\exp\biggl\{
\frac{\rho_Yt^2}{1+\rho_Y}+14t+2C
\biggr\}.
\]
Consequently, for an absolute constant \(C_1>0\),
\begin{equation}\label{eq:count-second-moment-error}
\log(4\mathcal R)
\le C_1(\rho_Yu^2+u+1).
\end{equation}

\smallskip\noindent
\emph{(c).}
Let \(\mu_A\) be a median of \(H_A\).
The convex concentration inequality
\cite[(4.17)]{BIRestricted} applies without modification:
the variance normalization makes the maximum, viewed as
a function of \(((\varepsilon_i+1)/2)_{i=1}^d\), convex
and \(2\)-Lipschitz. Hence
\begin{equation}\label{eq:count-convex-concentration}
\PP(|H_A-\mu_A|\ge s)\le4e^{-s^2/16},
\qquad s>0.
\end{equation}
Combining this with \(\PP(H_A>t)\ge\mathcal R^{-1}\)
gives
\[
\mu_A\ge t-4\sqrt{\log(4\mathcal R)}.
\]
By \eqref{eq:count-second-moment-error},
\[
\frac{4\sqrt{\log(4\mathcal R)}}u
\le4\sqrt{C_1(\rho_Y+u^{-1}+u^{-2})}
\longrightarrow0
\]
uniformly over the groups and admissible thresholds.
Since \(t-u=\gamma u/4\), it follows that
\(\mu_A-u\ge\gamma u/8\) for all sufficiently large \(Y\).
Another application of
\eqref{eq:count-convex-concentration} yields
\begin{equation}\label{eq:groupfailure}
\PP(H_A\le u)
\le4\exp\biggl\{-\frac{\gamma^2u^2}{1024}\biggr\}.
\end{equation}

Choose \(C_{\gamma,N}\) sufficiently large that
\(
\gamma^2C_{\gamma,N}^2/1024\ge N+1.
\)
The lower bound \(u\ge C_{\gamma,N}\sqrt{\log_3Y}\)
then gives
\begin{equation}\label{eq:count-one-group}
\PP\big(\max_{x\in A}Z_x\le u\big)
\le4(\log_2Y)^{-(N+1)}
\ll_{\gamma,N}(\log_2Y)^{-N}.
\end{equation}
This quantitative bound is the additional conclusion
needed here from the argument of
\cite[Lemma~4.4]{BIRestricted}.

\medskip\noindent
{\sc Step 3.}
Let \(J_{\rm bad}\) be the number of groups containing
no coordinate greater than \(u\).
By \eqref{eq:count-one-group} and Markov's inequality,
\[
\E J_{\rm bad}
\ll_{\gamma,N}J(\log_2Y)^{-N},
\qquad
\PP(J_{\rm bad}>J/2)
\ll_{\gamma,N}(\log_2Y)^{-N}.
\]
Outside this exceptional event, at least \(J/2\)
disjoint groups each supply a distinct exceedance.
By \eqref{eq:count-groups},
\[
\#\{x\in S:Z_x\ge u\}
\ge\frac J2
\ge\frac{|S|}{4g}
\ge\frac{|S|}{8}e^{-{(1+\gamma)u^2}/{2}}.
\]
This proves \eqref{eq:count}.

Negating all coefficients preserves the assumptions
and proves the assertion for \(-Z\).
\end{proof}


\section*{Statement on the use of AI tools}

The development of this paper involved an extended series of
mathematical discussions with GPT-6 Astra (OpenAI). These
discussions contributed in particular to the proofs of
Propositions~\ref{prop:flatness} and \ref{prop:goodlayers}. GPT-6 Astra was also used during
the preparation of the manuscript to discuss the presentation,
check drafts and edit the language. Claude (Anthropic) was used
for proofreading and checking the final version.

The choice of the problem, the overall proof strategy and the
formulation of the main results are due to the authors.
The authors take full responsibility for the content of the
paper, the correctness of its mathematical results and proofs,
and any remaining errors.

\end{document}